\newtheorem{theorem}{Theorem}[section]
\newtheorem{lemma}[theorem]{Lemma}
\newtheorem{proposition}[theorem]{Proposition}
\newtheorem{definition}[theorem]{Definition}
\newtheorem{remark}{Remark}[section]
\newcommand{\cE}{\ensuremath{\mathcal E}}
\newcommand{\cH}{\ensuremath{\mathcal H}}
\newcommand{\cO}{\ensuremath{\mathcal O}}
\newcommand{\cP}{\ensuremath{\mathcal P}}
\newcommand{\cR}{\ensuremath{\mathcal R}}
\newcommand{\cW}{\ensuremath{\mathcal W}}
\newcommand{\bbN}{{\ensuremath{\mathbb N}} }
\newcommand{\bbR}{{\ensuremath{\mathbb R}} }
\newcommand{\N}{\mathbb{N}}
\newcommand{\be}{\begin{equation}}
\newcommand{\ee}{\end{equation}}
\newcommand{\beq}{\begin{eqnarray}}
\newcommand{\eeq}{\end{eqnarray}}
\newcommand{\1}{{1} \hspace{-0.25 em}{\rm I}}
\newcommand{\R}{\mathbb{R}}
\newcommand{\ced}{\end{proof}}
\begin{document}
\begin{frontmatter}
\title{The existence and uniqueness result for Quasilinear Stochastic PDEs with Obstacle under  weaker integrability conditions}
\date{April 16th 2013}
\runtitle{}
\author{\fnms{Laurent}
 \snm{DENIS}\corref{}\ead[label=e1]{ldenis@univ-evry.fr}}
\thankstext{T1}{The work of the first and third author is supported by the chair \textit{risque de cr\'edit}, F\'ed\'eration bancaire Fran\c{c}aise}
\address{Universit\'e
d'Evry-Val-d'Essonne-FRANCE
\\\printead{e1}}
\author{\fnms{Anis}
 \snm{MATOUSSI}\corref{}\ead[label=e2]{anis.matoussi@univ-lemans.fr}}
\thankstext{t2}{The research of the second author was partially supported by the Chair {\it Financial Risks} of the {\it Risk Foundation} sponsored by Soci\'et\'e G\'en\'erale, the Chair {\it Derivatives of the Future} sponsored by the {F\'ed\'eration Bancaire Fran\c{c}aise}, and the Chair {\it Finance and Sustainable Development} sponsored by EDF and Calyon }
\address{
 LUNAM Université, Université du Maine - FRANCE \\\printead{e2}}
% \affiliation{Some University}

\author{\fnms{Jing}
 \snm{ZHANG}\corref{}\ead[label=e3]{jzhang@univ-evry.fr}}
%\thankstext{T1}{The work of the first author is supported by the chair \textit{risque de cr\'edit}, F\'ed\'eration bancaire Fran\c{c}aise}
\address{Universit\'e
d'Evry-Val-d'Essonne -FRANCE\\\printead{e3}}

\runauthor{L. Denis, A. Matoussi and J. Zhang}

\begin{abstract}
We prove an existence and uniqueness result for quasilinear Stochastic PDEs with Obstacle (in short OSPDE) under a weaker integrability condition 
on the coefficient and the barrier.

\end{abstract}

\begin{keyword}[class=AMS]
\kwd[Primary ]{60H15; 35R60; 31B150}
\end{keyword}

\begin{keyword}
\kwd{Stochastic PDE's, obstacle problems, It\^o's formula, comparison theorem, parabolic potential, regular measures}
\end{keyword}
\end{frontmatter}

\section{Introduction}
In this paper, we consider an obstacle problem for the following parabolic Stochastic PDE (SPDE in short)
\begin{equation}\label{SPDEO}\left\{ \begin{split}&du_t(x)=\partial_i  \left(a_{i,j}(x)\partial_ju_t(x)+g_i(t,x,u_t(x),\nabla
u_t(x))\right)dt+f(t,x,u_t(x),\nabla u_t(x))dt \\&\quad \quad\ \  \
\ \ \ +\sum_{j=1}^{+\infty}h_j(t,x,u_t(x),\nabla
u_t(x))dB^j_t +\nu (t,dx), \\
&u_t\geq S_t \, , \ \ \\ &u_0=\xi\, .\
\end{split}\right.\end{equation}
Here $a$ is a matrix defining a symmetric operator on an open domain $\cO$, with null boundary condition, $f,g,h$ are random coefficients and  $S$ is the given obstacle which is dominated by a given solution, $S'$, of a quasi-linear parabolic SPDE without obstacle.\\
In a recent work \cite{DMZ12} we have proved existence and uniqueness of the solution to this equation (\ref{SPDEO}) under standard Lipschitz hypotheses  and $L^2$-type integrability conditions on the coefficients. Let us recall that the solution is  a couple $(u,\nu )$, where $u$ is a process with values in the first order Sobolev space and $\nu$ is a random regular measure forcing $u$ to stay above $S$ and satisfying a minimal Skohorod condition.

 The study of the $L^p-$norms w.r.t. the randomness of the space-time uniform norm on the trajectories of a stochastic PDE was started by N. V. Krylov in \cite{Krylov}, for a more complete overview of existing works on this subject see \cite{DMS09,DM11} and the references therein. Concerning the obstacle problem, there are two approaches, a probabilistic one (see \cite{MatoussiStoica, Klimsiak}) based on the Feynmann-Kac's formula via the backward doubly stochastic differential equations and the analytical one (see \cite{DonatiPardoux,NualartPardoux,XuZhang}) based on the Green function.
\\

 In order to give a rigorous meaning to the notion of solution to this obstacle problem and inspired by the works of M. Pierre in the deterministic case (see \cite{Pierre,PIERRE}), we introduce the notion of parabolic capacity. The key point is that in \cite{DMZ12}, we construct a solution which admits a quasi continuous version hence defined outside a polar set and that regular measures which in general are not absolutely continuous w.r.t. the Lebesgue measure, do not charge polar sets. Moreover, in this reference, we have established an It\^o formula which is an essential tool in the present paper in order to get the comparison theorem.\\
 
 The aim of this paper is to relax the integrability assumptions both on $f$ and $S'$.  More precisely, we prove existence and uniqueness if the random coefficient
$f^0 (t,x) := f(t,x,0,0)$ belongs to a  class  $L^{p,q} (\cO \times [0,T])$  of functions $L^p$-integrable in space and $L^q$-integrable in time for $(p,q)$ such that $(1/p ,1/q)$ belongs to a certain interval with end points $p=2, q=1$ and $p=\frac{2^*}{2^* -1}, q=2$ respectively. Here, $2^* >2$ denotes a positive constant which depends on the dimension of the space.\\

The paper is organized as follows: in section 2 we introduce notations, hypotheses and the basic definitions related to the parabolic potential theory. In section 3, we prove an existence and uniqueness result for the
obstacle problem \eqref{SPDEO} with null Dirichlet condition under a weaker integrability hypothesis on $f$ and $S'$ and also give an estimate of
the positive part of the solution. Thanks to this estimate, we establish a comparison theorem for the solutions in section 4.
The last section is an Appendix in which we give  the proofs of several lemmas.

\section{Preliminaries}
\subsection{$L^{p,q}-$space}
Let $\mathcal{O}\subset \bbR^d$ be an open domain  and
$L^2(\mathcal{O})$ the set of square integrable functions with
respect to the Lebesgue measure on $\mathcal{O}$, it is an Hilbert
space equipped with the usual scalar product and norm as follows
$$(u,v)=\int_\mathcal{O}u(x)v(x)dx,\qquad\parallel u\parallel=(\int_\mathcal{O}u^2(x)dx)^{1/2}.$$
In general, we shall extend the notation
$$(u,v)=\int_\mathcal{O}u(x)v(x)dx,$$ where $u,\ v$ are measurable
functions defined on $\cO$ such that $uv\in L^1(\cO)$.
\\The first order Sobolev space of functions vanishing at the boundary will be denoted by $H_0^1(\cO)$,
its natural scalar product and norm are $$
\left( u,v\right) _{H_0^1\left( {\cal O}\right) }=\left( u,v\right) +\int_{%
{\cal O}}\sum_{i=1}^d\left( \partial _iu\left( x\right) \right)
\left(
\partial _iv\left( x\right) \right) dx,\;\left\| u\right\| _{H_0^1\left(
{\cal O}\right) }=\left( \left\| u\right\| _2^2+\left\| \nabla
u\right\| _2^2\right) ^{\frac 12}. $$As usual we shall denote
$H^{-1}(\cO)$ its dual space. \\
For each $t>0$ and for all real numbers $p,\,q\geq 1$, we denote by $%
L^{p,q}([0,t]\times {\cal O})$ the space of (classes of) measurable
functions $u:[0,t]\times {\cal O}\longrightarrow \mathbb{{R}}$ such
that
$$
\Vert u\Vert _{p,q;\,t}:=\left( \int_0^t\left( \int_{{\cal O}%
}|u(s,x)|^p\,dx\right) ^{q/p}\,ds\right) ^{1/q} $$ is finite. The
limiting cases with $p$ or $q$ taking the value $\infty $ are also
considered with the use of the essential sup norm.
%We identify this space, in an obvious way, with the space $L^q\left( \left[0,t\right];L^p\left( {\cal O}\right) \right) ,$ consisting of all measurable functions
%$u:\left[ 0,t\right] \rightarrow L^p\left( {\cal O}\right) $ such that $\displaystyle \int_0^t\left\| u_s\right\| _p^qds<\infty .$ This
%identification implies that $\displaystyle \left( \int_0^t\left\|u_s\right\| _p^qds\right) ^{\frac 1q}=\Vert u\Vert _{p,q;\,t}.$
\\Now we introduce some other spaces of functions and discuss a certain duality between them. Like in \cite{DMS05} and \cite{DMS09},
for self-containeness, we recall the following definitions:
\\Let $(p_1,q_1)$, $(p_2,q_2)$ $\in[1,\infty]^2$ be fixed and set $$ I=I\left( p_1,q_1,p_2,q_2\right) :=\left\{ \left( p,q\right)
\in \left[ 1,\infty \right] ^2/\;\exists \;\rho \in \left[
0,1\right] s.t.\right. $$ $$ \left. \frac 1p=\rho \frac
1{p_1}+\left( 1-\rho \right) \frac 1{p_2},\frac 1q=\rho \frac
1{q_1}+\left( 1-\rho \right) \frac 1{q_2}\right\} . $$ This means
that the set of inverse pairs $\left( \frac 1p,\frac 1q\right) ,$
$(p,q)$
 belonging to $I,$ is a segment contained in the square $%
\left[ 0,1\right] ^2,$ with the extremities $\left( \frac
1{p_1},\frac 1{q_1}\right) $ and $\left( \frac 1{p_2},\frac
1{q_2}\right) .$ \\We introduce: $$ L_{I;t}=\bigcap_{\left(
p,q\right) \in I}L^{p,q}\left( \left[ 0,t\right] \times {\cal
O}\right) . $$ We know that this space coincides with the
intersection of the extreme spaces,
$$ L_{I;t}=L^{p_1,q_1}\left( \left[ 0,t\right] \times {\cal
O}\right) \cap L^{p_2,q_2}\left( \left[ 0,t\right] \times {\cal
O}\right) $$ and that it is a Banach space with the following norm$$
\left\| u\right\| _{I;t}:=\left\| u\right\| _{p_1,q_1;t}\vee \left\|
u\right\| _{p_2,q_2;t}. $$
The other space of interest is the algebraic sum%
$$ L^{I;t}:=\sum_{\left( p,q\right) \in I}L^{p,q}\left( \left[
0,t\right] \times {\cal O}\right) , $$ which represents the vector
space generated by the same family of spaces.
This is a normed vector space with the norm%
$$ \left\| u\right\|^{I;t} :=\,\inf \left\{ \sum_{i=1}^n\left\|
u_i\right\| _{p_i,q_i;\,t}\,/\;u=\sum_{i=1}^nu_i,u_i\in
L^{p_i,q_i}\left( \left[ 0,t\right] \times {\cal O}\right) ,\,
\left( p_i,q_i\right) \in I,\,i=1,...n;\,n\in
\mathbb{{N}}^{*}\right\} . $$
Clearly one has $L^{I;t}\subset L^{1,1}\left( \left[ 0,t\right] \times {\cal %
O}\right) $ and $\left\| u\right\| _{1,1;t}\le c\left\| u\right\|
^{I;t},$ for each $u\in L^{I;t},$ with a certain constant $c>0.$

We also remark that if $\left( p,q\right) \in I,$ then the conjugate pair $%
\left( p^{\prime },q^{\prime }\right) ,$ with $\frac 1p+\frac
1{p^{\prime }}=\frac 1q+\frac 1{q^{\prime }}=1,$ belongs to another
set, $I^{\prime },$
of the same type. This set may be described by%
$$ I^{\prime }=I^{\prime }\left( p_1,q_1,p_2,q_2\right) :=\left\{
\left( p^{\prime },q^{\prime }\right) /\;\exists \left( p,q\right)
\in I\;s.t.\;\frac 1p+\frac 1{p^{\prime }}=\frac 1q+\frac
1{q^{\prime }}=1\right\} $$ and it is not difficult to check that
$I^{\prime }\left( p_1,q_1,p_2,q_2\right) =I\left( p_1^{\prime
},q_1^{\prime },p_2^{\prime },q_2^{\prime }\right) ,$ where
$p_1^{\prime },q_1^{\prime },p_2^{\prime }$ and $q_2^{\prime }$ are
defined by $\frac 1{p_1}+\frac 1{p_1^{\prime }}=\frac 1{q_1}+\frac
1{q_1^{\prime }}=\frac 1{p_2}+\frac 1{p_2^{\prime }}=\frac
1{q_2}+\frac 1{q_2^{\prime }}=1.$

Moreover, by H\"older's inequality, it follows that one has
\begin{equation}
\label{dual}\int_0^t\int_{{\cal O}}u\left( s,x\right) v\left(
s,x\right) dxds\le \left\| u\right\| _{I;t}\left\| v\right\|
^{I^{\prime };t},
\end{equation}
for any $u\in L_{I;t}$ and $v\in L^{I^{\prime };t}.$ This inequality
shows
that the scalar product of $L^2\left( \left[ 0,t\right] \times {\cal O}%
\right) $ extends to a duality relation for the spaces $L_{I;t}$ and
$L^{I^{\prime };t}.$

Now let us recall that the Sobolev inequality states that%
\begin{equation}\label{Sobolev}
\left\| u\right\| _{2^{*}}\le c_S\left\| \nabla u\right\| _2,
\end{equation}
for each $u\in H_0^1\left( {\cal O}\right) ,$ where $c_S>0$ is
a constant
that depends on the dimension and $2^{*}=\frac{2d}{d-2}$ if $d>2,$ while $%
2^{*}$ may be any number in $]2,\infty [$ if $d=2$ and $2^{*}=\infty $ if $%
d=1.$ Therefore one has%
$$ \left\| u\right\| _{2^{*},2;t}\le c_S\left\| \nabla u\right\|
_{2,2;t}, $$ for each $t\ge 0$ and each $u\in L_{loc}^2\left(
\mathbb{R}_{+};H_0^1\left( {\cal O}\right) \right) .$ If $u\in
L_{loc}^{\infty}\left( \mathbb{R}_{+}; L^2\left( {\cal O}\right) \,
\right) \bigcap L^2_{loc} \left( \mathbb{R}_+;  H_0^1\left( {\cal
O}\right) \right),$ one has
$$\left\| u\right\| _{2,\infty ;t}\vee \left\| u\right\|
_{2^{*},2;t}\le c_1\left( \left\| u\right\| _{2,\infty ;t}^2+\left\|
\nabla u\right\| _{2,2;t}^2\right) ^{\frac 12},$$with $c_1=c_S\vee
1.$

One particular case of interest for us in relation with this
inequality is when $p_1=2,q_1=\infty $ and $p_2=2^{*},q_2=2.$ If
$I=I\left( 2,\infty ,2^{*},2\right) ,$ then the corresponding set of
associated conjugate numbers is $I^{\prime }=I^{\prime }\left(
2,\infty ,2^{*},2\right) =I\left( 2,1,\frac{2^{*}}{2^{*}-1},2\right)
,$ where for $d=1$ we make the convention that
$\frac{2^{*}}{2^{*}-1}=1.$ In this particular case we shall use the
notation $L_{\#;t}:=L_{I;t}$ and $L_{\#;t}^*:=L^{I^{\prime };t}$ and
the
respective norms will be denoted by%
$$ \left\| u\right\| _{\#;t}:=\left\| u\right\| _{I;t}=\left\|
u\right\| _{2,\infty ;t}\vee \left\| u\right\|
_{2^{*},2;t},\;\left\| u\right\|_{\#;t}^*:=\left\| u\right\|
^{I^{\prime };t}. $$ Thus we may write
\begin{equation}\label{sobolev}\
\left\| u\right\| _{\#;t}\le c_1\left( \left\| u\right\| _{2,\infty
;t}^2+\left\| \nabla u\right\| _{2,2;t}^2\right) ^{\frac 12},
\end{equation}
for any  $u\in L_{loc}^{\infty}\left( \mathbb{{R}}_{+}; L^2\left(
{\cal O}\right) \, \right) \bigcap L^2_{loc} \left( \mathbb{R}_+;
H_0^1\left( {\cal O}\right) \right)$ and $t\ge 0$ and the
duality inequality becomes%
\begin{equation}\label{dual2}\int_0^t\int_{{\cal O}}u\left( s,x\right) v\left( s,x\right)
dxds\le \left\| u\right\| _{\#;t}\left\| v\right\|_{\#;t}^*,
\end{equation} for any $u\in L_{\#;t}$ and $v\in L_{\#;t}^*.$

%The space of  measurable functions $u:\mathbb{{R}}_{+}\rightarrow
%L^2\left(
%{\cal O}\right) $ such that $\left\| u\right\| _{2,2;t}<\infty ,$ for each $%
%t\ge 0,$ is denoted by $L_{loc}^2\left( \mathbb{{R}}_{+};L^2\left( {\cal O}%
%\right) \right) .$ Similarly, the space $L_{loc}^2\left( \mathbb{{R}}%
%_{+};H_0^1\left( {\cal O}\right) \right) $ consists of all
%measurable
%functions $u: \mathbb{{R}}_{+}\rightarrow H_0^1\left( {\cal O}\right) $ such that%
%$$ \left\| u\right\| _{2,2;t}+\left\| \nabla u\right\|
%_{2,2;t}<\infty , $$ for any $t\ge 0.$\\Finally, we introduce the following norm which is obtained by interpolation in $L^{p,q}-$spaces:
%\begin{equation}
%\label{sobolev}\left\| u\right\| _{\#;t}\le c_1\left( \left\|
%u\right\| _{2,\infty ;t}^2+\left\| \nabla u\right\|
%_{2,2;t}^2\right) ^{\frac 12},
%\end{equation}
%and we denote by $L_{\#;t}$ the set of functions $u$ such that
%$\left\| u\right\|_{\#;t}$ is finite. Its dual space is a functional
%space: $L^*_{\#;t}$ equipped with the norm $\left\|\
%\right\|^*_{\#;t}$ and we have
%\begin{equation*}\int_0^t\int_\cO u(x,s)v(x,s)dxds\leq\left\| u\right\|_{\#;t}\left\| v\right\|^*_{\#;t},\end{equation*}
%for any $u\in L_{\#;t}$ and $v\in L^*_{\#;t}$.

\subsection{Hypotheses}
We consider a sequence $((B^i(t))_{t\geq0})_{i\in\mathbb{N}^*}$ of
independent Brownian motions defined on a standard filtered
probability space $(\Omega,\mathcal{F},(\mathcal{F}_t)_{t\geq0},P)$
satisfying the usual conditions.\\
Let $A$ be a symmetric second order differential operator defined on the open subset $ \cO \subset \R^d$, with
domain $\mathcal{D}(A)$, given by
$$A:=-L=-\sum_{i,j=1}^d\partial_i(a^{i,j}\partial_j).$$ We assume that
$a=(a^{i,j})_{i,j}$ is a measurable symmetric matrix defined on
$\mathcal{O}$ which satisfies the uniform ellipticity
condition$$\lambda|\xi|^2\leq\sum_{i,j=1}^d
a^{i,j}(x)\xi^i\xi^j\leq\Lambda|\xi|^2,\ \forall x\in\mathcal{O},\
\xi\in \R^d,$$where $\lambda$ and $\Lambda$ are positive constants.
The energy associated with the matrix $a$ will be denoted by
\begin{equation}
\label{energy}
 \mathcal{E} \left( w,v\right)=\sum_{i,j=1}^d
\int_{\cO}a^{i,j}(x)\partial_i w(x)\partial_j v(x)\, dx,\ \ \forall w,\, v \in  H^1_{0}
(\mathcal{O} ).
\end{equation}
We consider the quasilinear stochastic partial differential equation
\eqref{SPDEO} with initial condition $u(0,\cdot)=\xi(\cdot)$ and Dirichlet
boundary condition $u(t,x)=0,\ \forall\ (t,x)\in
\bbR^+\times\partial\mathcal{O}$.

We assume that we have predictable random
functions\begin{eqnarray*}&&f:\R_+\times\Omega\times\mathcal{O}\times
\R\times \R^d\rightarrow
\R,\\&&g=(g_1,...,g_d):\R_+\times\Omega\times\mathcal{O}\times \R\times
\R^d\rightarrow
\R^d,\\&&h=(h_1,...,h_i,...):\R_+\times\Omega\times\mathcal{O}\times
\R\times \R^d\rightarrow \R^{\mathbb{N}^*}.\end{eqnarray*} We define
\begin{equation*}
\begin{split}
 &f ( \cdot
,\cdot,\cdot, 0,0):=f^0, \  g( \cdot,\cdot,\cdot ,0,0) :=g^0 =
(g_1^0,...,g_d^0) \ \mbox{and}
\  h( \cdot,\cdot,\cdot ,0,0) :=h^0 = (h_1^0,...,h_i^0,...) .\\
\end{split}
\end{equation*}
In the sequel, $|\cdot|$ will always denote the underlying Euclidean
or $l^2-$norm. For
example$$|h(t,\omega,x,y,z)|^2=\sum_{i=1}^{+\infty}|h_i(t,\omega,x,y,z)|^2.$$
\begin{remark} Let us note that this general setting of the SPDE \eqref{SPDEO} we consider, encompasses the case of an SPDE driven by a space-time noise, colored in space and white in time  as in \cite{SW} for example (see also Example 1 in \cite{DMZ12}).

\end{remark}
\textbf{Assumption (H):} There exist non-negative constants $C,\
\alpha,\ \beta$ such that for almost all $\omega$, the following
inequalities hold for all
$(x,y,z,t)\in\mathcal{O}\times\mathbb{R}\times\mathbb{R}^d\times\mathbb{R}_+$:\begin{enumerate}
\item   $|f(t,\omega,x,y,z)-f(t,\omega,x,y',z')|\leq C(|y-y'|+|z-z'|),$
\item $|g(t,\omega,x,y,z)-g(t,\omega,x,y',z')|\leq
C|y-y'|+\alpha|z-z'|,$
\item $|h(t,\omega,x,y,z)-h(t,\omega,x,y',z')|\leq
C|y-y'|+\beta|z-z'|,$
\item the contraction property: $2\alpha+\beta^2<2\lambda.$
\end{enumerate}

Moreover we  introduce some integrability conditions  on the
coefficients $f^0, \;
g^0, \, h^0$ and the initial data $\xi$ : we fix a terminal time $T>0$. \\

 \textbf{Assumption (HI2)}   $$  E \left(\|\xi \|_2^2   +\left\| f^0\right\|_{2,2;t}^2+\left\|
\left|g^0\right|\right\| _{2,2;t}^2+\left\|\left| h^0\right|\right\| _{2,2;t}^2\right) <\infty
, $$ for each $t\in[0,T]$.
\\[0.2cm]
\textbf{Assumption (HI\#)}
$$ E\left(  \|\xi \|_2^2+\left( \left\| f^0\right\|_{\#;t}^*\right) ^2+\left\|
\left|g^0\right|\right\| _{2,2;t}^2+\left\| \left|h^0\right|\right\| _{2,2;t}^2\right) <\infty
, $$ for each $t\in[0,T].$
\\[0.2cm]

\begin{remark}\label{injL2}
Note that $\left( 2,1\right) $ is the pair of conjugates of the pair
$\left( 2,\infty \right) $ and so $\left( 2,1\right) $ belongs to
the set $I^{\prime }$ which defines the space $L_{\#;t}^*.$ Since
$\left\| v\right\| _{2,1;t}\le \sqrt{t}\left\| v\right\| _{2,2;t}$
for each $v\in L^{2,2}\left( \left[
0,t\right] \times {\cal O}\right) ,$ it follows that%
$$ L^{2,2}\left( \left[ 0,t\right] \times {\cal O}\right) \subset
L^{2,1;t}\subset L_{\#;t}^*, $$ and $\left\| v\right\|_{\#;t}^*\le
\sqrt{t}\left\| v\right\| _{2,2;t},$ for each $v\in L^{2,2}\left(
\left[ 0,t\right] \times {\cal O}\right) .$ This shows that the
condition \textbf{(HI\#)} is weaker than \textbf{(HI2)}.
\end{remark}
%Hypothesis {\bf (H)} is assumed to hold throughout this paper.
\subsection{Weak solutions}
We now introduce $\cH_T$, the space of
  $H_0^1(\mathcal{O})$-valued
  predictable processes $(u_t)_{t \in[0,T]}$ such that
\[
\left( E \sup_{0\leq s\leq T} \left\| u_{s}\right\|_2
^{2}+\int_{0}^{T}E\, \mathcal{E}\left( u_{s}\right) ds\right)
^{1/2}\;< \; \infty \;.
\]

The space of test functions is the algebraic tensor product $\mathcal{D}=\mathcal{C}
_{c}^{\infty }(\bbR^+)\otimes \mathcal{C}_c^2 (\cO )$, where $\mathcal{C}
_{c}^{\infty }(\bbR^+)$ denotes the space of all real infinite
differentiable  functions with compact support in $\mathbb{R}^+$ and
$\mathcal{C}_c^2 (\cO )$ the set of $C^2$-functions with compact
support in $\cO$.

\vspace{0.5cm}

Now we recall the definition of the regular measure which has been defined in \cite{DMZ12}.\\
$\mathcal{K}$ denotes $L^\infty([0,T];L^2(\mathcal{O}))\cap
L^2([0,T];H_0^1(\mathcal{O}))$ equipped with the norm:
\begin{eqnarray*}\parallel
v\parallel^2_\mathcal{K}&=&\parallel
v\parallel^2_{L^\infty([0,T];L^2(\mathcal{O}))}+\parallel
v\parallel^2_{L^2([0,T];H_0^1(\mathcal{O}))}\\
&=&\sup_{t\in[0,T[}\parallel v_t\parallel^2 +\int_0^T \left(
\parallel v_t \parallel^2  +\mathcal{E}(v_t)\right)\, dt
.\end{eqnarray*} $\mathcal{C}$ denotes the space of continuous
functions with compact support in $[0,T[\times\mathcal{O}$ and
finally:
$$\mathcal {W}=\{\varphi\in L^2([0,T];H_0^1(\mathcal{O}));\ \frac{\partial\varphi}{\partial t}\in
L^2([0,T];H^{-1}(\mathcal{O}))\}, $$ endowed with the
norm$\parallel\varphi\parallel^2_{\mathcal {W}}=\parallel
\varphi\parallel^2_{L^2([0,T];H_0^1(\mathcal{O}))}+\parallel\displaystyle\frac{\partial
\varphi}{\partial t}\parallel^2_{L^2([0,T];H^{-1}(\mathcal{O}))}$. \\
It is known (see \cite{LionsMagenes}) that $\mathcal{W}$ is
continuously embedded in $C([0,T]; L^2 (\cO))$, the set of $L^2 (\cO
)$-valued continuous functions on $[0,T]$. So without ambiguity, we
will also consider
$\mathcal{W}_T=\{\varphi\in\mathcal{W};\varphi(T)=0\}$,
$\mathcal{W}^+=\{\varphi\in\mathcal{W};\varphi\geq0\}$,
$\mathcal{W}_T^+=\mathcal{W}_T\cap\mathcal{W}^+$.
\begin{definition}
An element $v\in \mathcal{K}$ is said to be a {\bf parabolic
potential} if it satisfies:
$$ \forall\varphi\in\mathcal{W}_T^+,\
\int_0^T-(\frac{\partial\varphi_t}{\partial
t},v_t)dt+\int_0^T\mathcal{E}(\varphi_t,v_t)dt\geq0.$$ We denote by
$\mathcal{P}$ the set of all parabolic potentials.
\end{definition}
The next representation property is  crucial:
\begin{proposition}(Proposition 1.1 in \cite{PIERRE})\label{presentation}
Let $v\in\mathcal{P}$, then there exists a unique positive Radon
measure on $[0,T[\times\mathcal{O}$, denoted by $\nu^v$, such that:
$$\forall\varphi\in\mathcal{W}_T\cap\mathcal{C},\ \int_0^T(-\frac{\partial\varphi_t}{\partial t},v_t)dt+\int_0^T\mathcal{E}(\varphi_t,v_t)dt=\int_0^T\int_\mathcal{O}\varphi(t,x)d\nu^v.$$
Moreover, $v$ admits a right-continuous (resp. left-continuous)
version $\hat{v} \ (\makebox{resp. } \bar{v}): [0,T]\mapsto L^2
(\cO)$ .\\
Such a Radon measure, $\nu^v$ is called {\bf a regular measure} and
we write:
$$ \nu^v =\frac{\partial v}{\partial t}+Av .$$
\end{proposition}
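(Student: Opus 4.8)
The plan is to read the defining inequality of a parabolic potential as the positivity of a linear functional on test functions, to show that this functional is locally dominated by the uniform norm, and then to invoke the Riesz--Markov representation theorem; the existence of the one-sided continuous versions is then obtained by a regularization of $\nu^v$, exactly as in \cite{PIERRE}.

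Set, for $\varphi\in\mathcal{W}_T$,
$$T_v(\varphi):=\int_0^T\Bigl(-\frac{\partial\varphi_t}{\partial t},\,v_t\Bigr)\,dt+\int_0^T\mathcal{E}(\varphi_t,v_t)\,dt.$$
By Cauchy--Schwarz, the bound $\mathcal{E}(\varphi,v)\le\Lambda\|\nabla\varphi\|_2\|\nabla v\|_2$, and $v\in\mathcal{K}$, $T_v$ is a well-defined linear functional on $\mathcal{W}_T$, continuous for $\|\cdot\|_{\mathcal{W}}$; by the definition of $\mathcal{P}$ it is nonnegative on $\mathcal{W}_T^+$. Note also that $\mathcal{C}_c^\infty([0,T[\times\mathcal{O})\subset\mathcal{W}_T\cap\mathcal{C}$. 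Now fix a compact $K\subset[0,T[\times\mathcal{O}$ and pick $\phi_K\in\mathcal{C}_c^\infty([0,T[\times\mathcal{O})$ with $\phi_K\ge 0$ and $\phi_K\ge 1$ on $K$; then $\phi_K\in\mathcal{W}_T^+$. If $\varphi\in\mathcal{W}_T\cap\mathcal{C}$ has $\mathrm{supp}\,\varphi\subset K$ and $\|\varphi\|_\infty\le 1$, then $\phi_K\pm\varphi\in\mathcal{W}_T^+$ (off $K$ it equals $\phi_K\ge 0$, and on $K$ it is $\ge 1-|\varphi|\ge 0$), hence $0\le T_v(\phi_K\pm\varphi)=T_v(\phi_K)\pm T_v(\varphi)$, so that $|T_v(\varphi)|\le T_v(\phi_K)=:C_K$. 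By homogeneity, $|T_v(\varphi)|\le C_K\|\varphi\|_\infty$ for every $\varphi\in\mathcal{W}_T\cap\mathcal{C}$ supported in $K$.

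Since $\mathcal{C}_c^\infty([0,T[\times\mathcal{O})$ is dense in $\mathcal{C}_c([0,T[\times\mathcal{O})$ for uniform convergence with supports contained in a fixed compact set, the previous estimate shows that $T_v$ extends in a unique way to a positive linear functional on $\mathcal{C}_c([0,T[\times\mathcal{O})$, still dominated by $C_K\|\cdot\|_\infty$ on functions supported in $K$, and that this extension coincides with $T_v$ on $\mathcal{W}_T\cap\mathcal{C}$. The Riesz--Markov--Kakutani theorem then provides a unique positive Radon measure $\nu^v$ on $[0,T[\times\mathcal{O}$ with $T_v(\varphi)=\int_0^T\int_{\mathcal{O}}\varphi(t,x)\,d\nu^v$ for all $\varphi\in\mathcal{C}_c$, in particular for all $\varphi\in\mathcal{W}_T\cap\mathcal{C}$; uniqueness of $\nu^v$ is clear since a Radon measure is determined by its action on $\mathcal{C}_c$, and one writes $\nu^v=\frac{\partial v}{\partial t}+Av$.

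For the right- and left-continuous versions one follows \cite{PIERRE}: approximate $v$ by smoother parabolic potentials $v^{\eps}$ (for instance those associated with time-regularizations $\nu^{v,\eps}$ of $\nu^v$) that belong to $\mathcal{C}([0,T];L^2(\mathcal{O}))$ and converge to $v$ in $\mathcal{K}$; using the positivity of $\nu^v$ one controls $\sup_{t\in[0,T]}\|v^{\eps}_t-v^{\eta}_t\|_2$ uniformly, so $v^{\eps}_t$ converges in $L^2(\mathcal{O})$ for every $t$ to a map of $t$ admitting one-sided limits at each time, and selecting the right-continuous, resp. left-continuous, representative produces $\hat{v}$, resp. $\bar{v}$. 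The routine part is the Riesz-type argument given above, which merely repackages the definition of a parabolic potential; the genuine obstacle is this last construction of the precise one-sided representatives. It rests on the finer parabolic potential theory (monotone/excessive-function techniques, or the heat-kernel representation $v_t=\int_{]t,T[\times\mathcal{O}}G_{s-t}(\cdot,y)\,d\nu^v(s,y)$ together with an initial contribution) and on a careful verification that the $\mathcal{K}$-regularity of $v$ combined with the positivity of $\nu^v$ forces the existence of $L^2(\mathcal{O})$-valued one-sided limits at every $t\in[0,T]$; this is exactly the content borrowed from \cite{PIERRE}.
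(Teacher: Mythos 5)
A preliminary remark: the paper does not prove this proposition at all --- it is quoted as Proposition 1.1 of \cite{PIERRE} and used as an imported ingredient of the potential-theoretic framework, so there is no internal proof to compare against and your attempt must be judged on its own. The first half of your argument, the existence and uniqueness of the Radon measure, is correct and is the standard Riesz--Markov route: positivity of $T_v$ on $\mathcal{W}_T^+$, the domination $|T_v(\varphi)|\le T_v(\phi_K)\,\|\varphi\|_\infty$ for $\varphi$ supported in $K$ obtained from $\phi_K\pm\varphi\in\mathcal{W}_T^+$, density of $\mathcal{C}_c^\infty([0,T[\times\mathcal{O})$ in $\mathcal{C}_c([0,T[\times\mathcal{O})$, and the representation theorem on the locally compact space $[0,T[\times\mathcal{O}$ (correctly allowing the measure to charge $\{0\}\times\mathcal{O}$, since test functions need not vanish at $t=0$). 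Two routine points should still be recorded: that the extension stays \emph{positive} on nonnegative elements of $\mathcal{C}_c$ (use $\varphi_n+\|\varphi_n-\varphi\|_\infty\,\phi_{K'}\ge 0$ and pass to the limit), and that the uniform bound must be invoked on a slightly enlarged compact, since mollification enlarges supports.

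The genuine gap is the second assertion, the existence of right- and left-continuous $L^2(\mathcal{O})$-valued versions $\hat{v}$ and $\bar{v}$. There you do not give a proof: the pivotal claim that ``the positivity of $\nu^v$ controls $\sup_{t\in[0,T]}\|v^{\varepsilon}_t-v^{\eta}_t\|_2$ uniformly'' is exactly the hard statement, it is asserted without justification, and it does not follow from positivity alone; you then explicitly defer the rest to \cite{PIERRE}. The actual mechanism behind the one-sided limits is finer: for a fixed nonnegative test function $\phi$, the scalar map $t\mapsto (v_t,\phi)+\int_0^t\mathcal{E}(v_s,\phi)\,ds$ is nondecreasing (this is where positivity of $\nu^v$ enters), hence $t\mapsto(v_t,\phi)$ has one-sided limits at every $t$; one then needs a density argument in $\phi$ together with the $L^\infty([0,T];L^2(\mathcal{O}))$ bound to get weak one-sided limits, and an energy argument to upgrade them to strong $L^2(\mathcal{O})$ limits. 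None of this is in your sketch. So, as written, your proposal genuinely proves the representation half of the proposition and cites the continuity half --- which is legitimate if the result is treated, as the paper treats it, as an external input, but it is not a complete proof of the statement.
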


\begin{definition}
Let $K\subset [0,T[\times\mathcal{O}$ be compact, $v\in\mathcal{P}$
is said to be  \textit{$\nu-$superior} than 1 on $K$, if there exists a
sequence $v_n\in\mathcal{P}$ with $v_n\geq1\ a.e.$ on a neighborhood
of $K$ converging to $v$ in $L^2([0,T];H_0^1(\mathcal{O}))$.
\end{definition}
We denote:$$\mathscr{S}_K=\{v\in\mathcal{P};\ v\ is\ \nu-superior\
to\ 1\ on\ K\}.$$
\begin{proposition}(Proposition 2.1 in \cite{PIERRE})
Let $K\subset [0,T[\times\mathcal{O}$ compact, then $\mathscr{S}_K$
admits a smallest $v_K\in\mathcal{P}$ and the measure $\nu^v_K$
whose support is in $K$ satisfies
$$\int_0^T\int_\mathcal{O}d\nu^v_K=\inf_{v\in\mathcal{P}}\{\int_0^T\int_\mathcal{O}d\nu^v;\ v\in\mathscr{S}_K\}.$$
\end{proposition}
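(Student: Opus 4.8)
The statement is the parabolic counterpart of the classical construction of the equilibrium (capacitary) potential of a compact set, and I would prove it along the lines of M.\ Pierre, \cite{PIERRE}. The backbone is the lattice structure: \textit{$\mathcal{P}$ is a convex cone which is stable under finite infima}, i.e.\ $v_1,v_2\in\mathcal{P}\Rightarrow v_1\wedge v_2\in\mathcal{P}$. This is the parabolic minimum principle; I would prove it by writing $v_1\wedge v_2=v_1-(v_1-v_2)^+$, approximating $v_1,v_2$ by functions smooth in time, and testing the defining variational inequality against $\varphi\in\mathcal{W}_T^+$ together with a Stampacchia-type truncation isolating the contribution of $\{v_1>v_2\}$; the one-sided continuous versions of Proposition~\ref{presentation} give pointwise-in-time meaning to the comparison. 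Since $\wedge$ is continuous on $H_0^1(\mathcal{O})$, hence on $L^2([0,T];H_0^1(\mathcal{O}))$, and since approximating sequences combine, $\mathscr{S}_K$ is itself stable under $\wedge$. Also $\mathscr{S}_K\neq\varnothing$: for $\psi\in\mathcal{C}^\infty_c([0,T[\times\mathcal{O})$ with $0\le\psi\le1$ and $\psi\equiv1$ on a neighbourhood of $K$, the solution of the parabolic obstacle problem with obstacle $\psi$ and zero initial datum lies in $\mathcal{P}$, dominates $\psi$, and so belongs to $\mathscr{S}_K$.

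For the smallest element I would take a minimizing sequence for the energy $v\mapsto\int_0^T\mathcal{E}(v_s)\,ds$ over $\mathscr{S}_K$ and, replacing $(v_n)$ by $v_1\wedge\cdots\wedge v_n$, assume it nonincreasing and still in $\mathscr{S}_K$. The crucial point is that $(v_n)$ stays bounded in $\mathcal{K}$: truncating first at level $1$ (which does not increase the Dirichlet energy nor $\|v_n(0)\|_2$ and keeps the $v_n$ in $\mathscr{S}_K$), the energy identity obtained by pairing $\nu^{v}=\partial_t v+Av$ with $v$,
\[
\tfrac12\|\widehat v_t\|_2^2+\int_0^t\mathcal{E}(v_s)\,ds=\tfrac12\|v_0\|_2^2+\int_0^t\!\!\int_{\mathcal{O}}v\,d\nu^{v},
\]
leads to a self-improving inequality $M\le a+b\sqrt M$ for the total mass $M=\nu^{v_n}([0,T[\times\mathcal{O})$, hence to a uniform bound. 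Then $v_n\downarrow v_K$ a.e.\ and in $L^2([0,T];L^2(\mathcal{O}))$, $\nabla v_n\rightharpoonup\nabla v_K$, $v_K\in\mathcal{K}$; as the left-hand side of the inequality defining $\mathcal{P}$ is \emph{linear} in $v$ it is weakly continuous, so $v_K\in\mathcal{P}$, and passing to the limit in the approximating sequences gives $v_K\in\mathscr{S}_K$. Finally $v_K$ is the smallest element: given $u\in\mathcal{P}$ with $u\ge1$ a.e.\ on a neighbourhood $U$ of $K$, pick $\psi$ as above with $\mathrm{supp}\,\psi\subset U$; then $u\ge\psi$ everywhere (near $K$ because $u\ge1\ge\psi$, away from $K$ because $u\ge0$ as $u$ is a potential), so $u$ dominates the obstacle-problem solution and hence $u\ge v_K$; since every element of $\mathscr{S}_K$ is an $L^2([0,T];H_0^1)$-limit of such $u$'s, $v_K\le w$ for all $w\in\mathscr{S}_K$.

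It remains to locate $\mathrm{supp}\,\nu^{v_K}$ and prove the mass formula. For the support, run the construction with obstacles $\psi_n\downarrow\mathbf 1_K$, $0\le\psi_n\le1$, $\psi_n\equiv1$ near $K$, supported in a decreasing sequence of neighbourhoods of $K$ with intersection $K$; the corresponding solutions $v_n$ are nonincreasing, satisfy $\mathrm{supp}\,\nu^{v_n}\subset\mathrm{supp}\,\psi_n$ by complementarity, and converge to $v_K$ by the minimality just established; passing to the limit in $\nu^{v_K}=\lim\nu^{v_n}$ in $\mathcal{D}'$ gives $\mathrm{supp}\,\nu^{v_K}\subset\bigcap_n\mathrm{supp}\,\psi_n=K$. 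For the mass formula, $\int\!\!\int d\nu^{v_K}\ge\inf$ is trivial since $v_K\in\mathscr{S}_K$. For the reverse, $v_K\le1$ (because $v_K\wedge1\in\mathscr{S}_K$ and $\le v_K$, so minimality forces $v_K=v_K\wedge1$); combined with $v_K\ge1$ on $K\supset\mathrm{supp}\,\nu^{v_K}$ this gives $v_K=1$ $\nu^{v_K}$-a.e., hence $\int\!\!\int d\nu^{v_K}=\int\!\!\int v_K\,d\nu^{v_K}$. For $w\in\mathscr{S}_K$ taken of the form $u$ above (so $w\ge v_K$), the bound $v_K\le1$ gives $\int\!\!\int d\nu^{w}\ge\int\!\!\int v_K\,d\nu^{w}$, and pairing $\nu^{w}-\nu^{v_K}=\partial_t(w-v_K)+A(w-v_K)$ with $v_K$ via the Itô/energy formula of \cite{DMZ12}, using $w-v_K\ge0$ and the variational characterization of $v_K$, yields $\int\!\!\int v_K\,d\nu^{w}\ge\int\!\!\int v_K\,d\nu^{v_K}=\int\!\!\int d\nu^{v_K}$. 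Hence $\int\!\!\int d\nu^{w}\ge\int\!\!\int d\nu^{v_K}$, which is the claimed equality.

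The main obstacle — the heart of Pierre's argument — is the combination of the parabolic minimum principle (stability of $\mathcal{P}$ under $\wedge$, which in the non-symmetric parabolic setting needs the time-dependent truncation together with the one-sided continuous versions of the potential) with the a priori estimate that forces the decreasing minimizing sequence to converge \emph{in $\mathcal{K}$} and not merely in $L^1$; the control of the gradients through the self-improving energy bound is the delicate point. The integration by parts in the mass identity is the other nontrivial ingredient, since $\partial_t v_K$ does not live in a convenient space and one must invoke the Itô formula of \cite{DMZ12}. Everything else — nonemptiness, the lattice manipulations on $\mathscr{S}_K$, the identification of the support via shrinking obstacles, and the truncations — is comparatively routine.
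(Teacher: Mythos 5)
This proposition is not proved in the paper at all: it is quoted verbatim as Proposition~2.1 of Pierre's article \cite{PIERRE}, so there is no in-paper argument to compare yours against. Judged on its own, your sketch reconstructs the classical route of Pierre (and Mignot--Puel) correctly at the level of strategy: stability of $\mathcal{P}$ under $\wedge$, a decreasing minimizing sequence bounded in $\mathcal{K}$, identification of the limit as the smallest element, localization of the support by shrinking obstacles, and the mass identity via $v_K\le 1$, $v_K=1$ $\nu^{v_K}$-a.e., and a symmetrization/pairing argument. You have also correctly isolated the two genuinely hard points (the parabolic lattice property and the integration by parts when $\partial_t v_K$ is only a measure plus an $H^{-1}$-valued object).

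Two places in the sketch are thinner than you acknowledge. First, the truncation ``at level $1$'' is not covered by the lattice property of $\mathcal{P}$ as you state it, because the constant function $1$ is not an element of $\mathcal{P}$ (it is not in $H^1_0(\mathcal{O})$); you need the stronger statement that the infimum of a potential with a nonnegative supersolution of $\partial_t+A$ is again a potential, which is a separate (if similar) lemma. Second, the key inequality $\int\!\!\int v_K\,d\nu^{w}\ge\int\!\!\int v_K\,d\nu^{v_K}$ does not follow directly from the variational characterization of $v_K$: minimality over the convex set naturally yields $\int\!\!\int (w-v_K)\,d\nu^{v_K}\ge 0$, i.e.\ an inequality with the measures attached to the \emph{other} factor. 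Passing from $\int\!\!\int w\,d\nu^{v_K}$ to $\int\!\!\int v_K\,d\nu^{w}$ requires the symmetrization identity
\begin{equation*}
\int_0^T\!\!\int_{\mathcal{O}} v_K\,d\nu^{w}+\int_0^T\!\!\int_{\mathcal{O}} w\,d\nu^{v_K}
=2\int_0^T\mathcal{E}(v_K,w)\,dt+(v_{K,0},w_0)-(\bar v_{K,T^-},\bar w_{T^-}),
\end{equation*}
and you must check that the boundary terms in time have the favourable sign (using the one-sided continuous versions and $v_{K,0}=0$, $v_K,w\ge 0$). This is precisely where the lack of symmetry of the parabolic form enters, and it should be made explicit rather than absorbed into ``the variational characterization of $v_K$''. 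With these two points repaired, the argument is the standard one and is sound.
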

\begin{definition}(Parabolic Capacity)\begin{itemize}
                                        \item Let $K\subset [0,T[\times\mathcal{O}$ be compact, we define
$cap(K)=\int_0^T\int_\mathcal{O}d\nu^v_K$;
                                        \item let $O\subset
[0,T[\times\mathcal{O}$ be open, we define $cap(O)=\sup\{cap(K);\
K\subset O\ compact\}$;
                                        \item   for any borelian
$E\subset [0,T[\times\mathcal{O}$, we define $cap(E)=\inf\{cap(O);\
O\supset E\ open\}$.
                                      \end{itemize}

\end{definition}
\begin{definition}A property is said to hold quasi-everywhere (in short q.e.)
if it holds outside a set of null capacity.
\end{definition}
\begin{definition}(Quasi-continuous)

\noindent A function $u:[0,T[\times\mathcal{O}\rightarrow\mathbb{R}$
 is called quasi-continuous, if there exists a decreasing sequence of open
subsets $O_n$ of $[0,T[\times\mathcal{O}$ with: \begin{enumerate}
                        \item for all $n$, the restriction of $u_n$ to the complement of $O_n$ is
continuous;
                                       \item $\lim_{n\rightarrow+\infty}cap\;(O_n)=0$.
                                     \end{enumerate}
We say that $u$ admits a quasi-continuous version, if there exists
$\tilde{u}$ quasi-continuous  such that $\tilde{u}=u\ a.e.$
\end{definition}
The next proposition, whose proof may be found in \cite{Pierre} or \cite{PIERRE} shall play an important role in the sequel:
\begin{proposition}\label{Versiont} Let $K\subset \cO$ a compact set, then $\forall t\in [0,T[$
$$cap (\{ t\}\times K)=\lambda_d (K),$$
where $\lambda_d$ is the Lebesgue measure on $\cO$.\\
As a consequence, if $u: [0,T[\times \cO\rightarrow \R$ is a map defined quasi-everywhere then it defines uniquely a map from $[0,T[$ into $L^2 (\cO)$.
In other words, for any $t\in [0,T[$, $u_t$ is defined without any ambiguity as an element in $L^2 (\cO)$.
Moreover, if $u\in \mathcal{P}$, it admits  version $\bar{u}$ which is left continuous on $[0,T]$ with values in $L^2 (\cO )$ so that $u_T =\bar{u}_{T^-}$ is also defined without ambiguity.
\end{proposition}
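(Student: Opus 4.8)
The plan is to prove the identity $cap(\{t\}\times K)=\lambda_d(K)$ by establishing the two inequalities separately; granting it, both consequences follow quickly. Indeed, if $u$ is defined quasi-everywhere, i.e.\ outside a Borel set $N\subset[0,T[\times\cO$ with $cap(N)=0$, then for a fixed $t\in[0,T[$ and any compact $K'\subset N_t:=\{x\in\cO:(t,x)\in N\}$ one has $\{t\}\times K'\subset N$, so by monotonicity of $cap$ and the identity $\lambda_d(K')=cap(\{t\}\times K')\le cap(N)=0$; inner regularity of Lebesgue measure then gives $\lambda_d(N_t)=0$, so $u_t=u(t,\cdot)$ is defined $\lambda_d$-a.e.\ and, being independent (mod $\lambda_d$-null sets) of the chosen quasi-continuous version, is unambiguous as an element of $L^2(\cO)$. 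The last assertion is immediate from Proposition \ref{presentation}, which already supplies a left-continuous $L^2(\cO)$-valued version $\bar u$ on $[0,T]$, so that $u_T:=\bar u_{T^-}$ is well defined. Everything thus reduces to the capacity identity.

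For the upper bound $cap(\{t\}\times K)\le\lambda_d(K)$ I would fix $\eps>0$, pick an open set $\omega$ with $K\subset\omega\subset\cO$ and $\lambda_d(\omega)\le\lambda_d(K)+\eps$, and choose $\phi\in C_c^\infty(\omega)$ with $0\le\phi\le1$ and $\phi\equiv1$ on a neighbourhood of $K$. For $n$ large let $w_n$ be the parabolic potential with $w_n(0)=0$ whose associated measure is $\nu^{w_n}=(1+\eps)\,n\,\mathbf 1_{[t-1/n,t]}(s)\,\phi(x)\,ds\,dx$; it is given explicitly by Duhamel's formula $w_n(s)=(1+\eps)\,n\int_{(t-1/n)\vee0}^{s\wedge t}e^{-(s-r)A}\phi\,dr$, using the semigroup $(e^{-rA})$ of $A$ with Dirichlet condition. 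Since $e^{-rA}\phi\to\phi$ as $r\downarrow0$, both in $H^1_0(\cO)$ and, by De Giorgi--Nash--Moser, locally uniformly, I would check that $w_n\to w$ in $L^2([0,T];H^1_0(\cO))$, with $w$ equal to $0$ on $[0,t)$ and to $(1+\eps)e^{-(s-t)A}\phi$ on $[t,T]$, that $w\in\mathcal P$ with $\nu^w=(1+\eps)\,\delta_t\otimes\phi\,dx$, and that for $n$ large $w_n\ge1$ a.e.\ on a (shrinking) open neighbourhood of $\{t\}\times K$; here the smoothness of $\phi$ is what keeps $\|\nabla w_n\|_2$ bounded on the short interval $[t-1/n,t]$. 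Then $w\in\mathscr S_{\{t\}\times K}$, whence $cap(\{t\}\times K)\le\int_0^T\!\!\int_\cO d\nu^w=(1+\eps)\int_\cO\phi\,dx\le(1+\eps)(\lambda_d(K)+\eps)$, and $\eps\to0$ finishes.

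For the lower bound $cap(\{t\}\times K)\ge\lambda_d(K)$, let $v\in\mathcal P$ be $\nu$-superior to $1$ on $\{t\}\times K$; it suffices to show $\int_0^T\!\!\int_\cO d\nu^v\ge\lambda_d(K)$. I would use two facts: every element of $\mathcal P$ is $\ge0$ a.e.\ (a standard consequence of the defining inequality and the parabolic maximum principle, since $\nu^v\ge0$, $v(0)\ge0$ and the Dirichlet condition hold), and the left-continuous version satisfies $\bar v(t)\ge1$ a.e.\ on $K$. Granting these, fix $\chi\in C_c^\infty(\cO)$ with $0\le\chi\le1$ and $\chi\equiv1$ on $K$, and let $\varphi$ solve the \emph{backward} heat equation $-\partial_s\varphi+A\varphi=0$ on $(0,t)\times\cO$ with $\varphi(t)=\chi$, extended by $0$ on $[t,T]$; then $0\le\varphi\le1$ by the maximum principle and, crucially, the integrand $-(\partial_s\varphi,v)+\mathcal E(\varphi,v)$ vanishes on $(0,t)$. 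Hence, cutting $\varphi$ off near $t$ by a factor $\eta_\eps(s)$ (equal to $1$ on $[0,t-\eps]$, decreasing to $0$ on $[t-\eps,t]$) and applying Proposition \ref{presentation}, $\int_0^T\!\!\int_\cO\eta_\eps\varphi\,d\nu^v\to(\chi,\bar v(t))$ as $\eps\to0$; since $0\le\eta_\eps\varphi\le1$ this gives $(\chi,\bar v(t))\le\int_0^T\!\!\int_\cO d\nu^v$, while $(\chi,\bar v(t))\ge\int_K\bar v(t)\,dx\ge\lambda_d(K)$ by the two facts. Taking the infimum over such $v$ yields the bound.

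The hard part will be the second fact in the lower bound, namely $\bar v(t)\ge1$ a.e.\ on $K$ when $v$ is merely $\nu$-superior to $1$ on $\{t\}\times K$: for each term $v_n$ of an approximating sequence one gets $\bar v_n(t)\ge1$ a.e.\ on $K$ by left-continuity, but transferring this to the $L^2([0,T];H^1_0)$-limit $v$ requires a lower-semicontinuity property of the ``trace at time $t$'' map $v\mapsto\bar v(t)$ along sequences in $\mathcal P$, which must be extracted from the structure of $\mathcal P$ --- via Proposition \ref{presentation}, $s\mapsto(\bar v(s),\psi)$ is the sum of a nondecreasing and a Lipschitz function for $\psi\ge0$, hence of bounded variation, and this is what one has to exploit. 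A secondary technical point is that the backward-heat test function $\varphi$ is not compactly supported in space, so one first extends the duality of Proposition \ref{presentation} to bounded $\varphi\in\mathcal W_T$ when $\nu^v$ is a finite measure (or truncates $\varphi$ far from $K$ and controls the error). The Duhamel estimates, the semigroup continuity and the maximum-principle arguments are otherwise routine.
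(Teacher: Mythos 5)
First, a point of reference: the paper does not prove this proposition at all --- it is quoted from Pierre (\cite{Pierre}, \cite{PIERRE}) --- so your attempt has to be judged on its own terms. Your reduction of the two consequences to the capacity identity is correct, and the two-inequality strategy is the natural one, but both halves have problems. In the upper bound, the approximating potentials $w_n$ built from the measure $(1+\eps)\,n\,\mathbf 1_{[t-1/n,t]}\phi\,ds\,dx$ are \emph{not} $\ge 1$ a.e.\ on any neighbourhood of $\{t\}\times K$: at $s=t-\tfrac{1}{2n}$ one has $w_n(s)\approx\tfrac{1+\eps}{2}\phi<1$, and every neighbourhood of $\{t\}\times K$ contains such points, so the sequence does not witness $w\in\mathscr S_{\{t\}\times K}$ as the definition of $\nu$-superiority requires. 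This is repairable --- support the approximating measures on $[t-1/n,\,t-1/(2n)]$ with density $2(1+\eps)n\phi$, so that for large $n$ one has $w_n\ge1$ on $(t-1/(2n),\,t+\delta)\times V$ for a fixed neighbourhood $V$ of $K$ and some $\delta>0$, with the same limit $w$ --- but as written the key membership claim fails.

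The lower bound contains a genuine error, beyond the gap you yourself flag. You reduce everything to the claim that the \emph{left}-continuous version satisfies $\bar v(t)\ge1$ a.e.\ on $K$ for every $v\in\mathscr S_{\{t\}\times K}$. This is false, and your own upper-bound potential is the counterexample: $w=0$ on $[0,t)$ and $w=(1+\eps)e^{-(s-t)A}\phi$ on $[t,T]$ lies in $\mathscr S_{\{t\}\times K}$ yet has $\bar w(t)=0$, so your pairing argument yields only the useless inequality $\int d\nu^{w}\ge 0$. The relevant one-sided trace is the right-continuous one, $\hat v(t)\ge\mathbf 1_K$ a.e., and the cutoff must accordingly sit on $[t,t+\eps]$ with the backward-heat terminal data imposed at $t+\eps$, so that $\int\eta_\eps\varphi_\eps\,d\nu^v\to(\chi,\hat v(t))$. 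Even after this sign correction, the key trace inequality is only gestured at, not proved; it can be obtained by observing that for $0\le\psi\in C_c^\infty(\cO)$ the map $s\mapsto(\hat v(s),\psi)+\int_0^s\mathcal E(v_r,\psi)\,dr$ is nondecreasing and right-continuous, passing this monotonicity through the a.e.-in-$s$ convergence of $(\hat v_n(s),\psi)$ along the approximating sequence, and using $v_n\ge0$ together with $\hat v_n(t)\ge\mathbf 1_{U_n}\ge\mathbf 1_K$. Without that lemma, and with the left/right confusion as it stands, the lower bound is not established.
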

\begin{remark} The previous proposition applies if for example $u$ is quasi-continuous.
\end{remark}
%Now we introduce that, $\mathcal{K}_{loc}$ denotes
%$L^\infty(0,T;L^2_{loc}(\mathcal{O}))\cap
%L^2(0,T;H_{loc}^1(\mathcal{O}))$ equipped with the norm:
%\begin{eqnarray*}\parallel
%v\parallel^2_{\mathcal{K}_{loc}}&=&\parallel
%v\parallel^2_{L^\infty(0,T;L^2_{loc}(\mathcal{O}))}+\parallel
%v\parallel^2_{L^2(0,T;H_{loc}^1(\mathcal{O}))}\\
%&=&\sup_{t\in[0,T[}\int_K(v_t(x))^2dx +\int_0^T\int_K \left(
%v_t(x)^2  +|\nabla v_t(x)|^2\right)dx dt .\end{eqnarray*}
%\begin{definition}
%An element $v\in \mathcal{K}_{loc}$ is said to be a {\bf local
%parabolic potential} if it satisfies:
%$$ \forall\varphi\in\mathcal{W}_T^+,\
%\int_0^T-(\frac{\partial\varphi_t}{\partial
%t},v_t)dt+\int_0^T\mathcal{E}(\varphi_t,v_t)dt\geq0.$$ We denote by
%$\mathcal{P}_{loc}$ the set of all parabolic potentials.
%\end{definition}
%\textbf{We define $\nu$ is a local regular measure associated to a
%local parabolic potential $v$ if it satisfies the relation in
%Proposition \ref{presentation}. }

We end this part by a convergence lemma which plays an important
role in our approach (Lemma 3.8 in \cite{PIERRE}):
\begin{lemma}\label{convergemeas}
If $v^n\in\mathcal{P}$ is a bounded sequence in $\mathcal{K}$ and
converges weakly to $v$ in $L^2([0,T];H_0^1(\mathcal{O}))$; if $u$ is
a quasi-continuous function and $|u|$ is bounded by a element in
$\mathcal{P}$. Then
$$\lim_{n\rightarrow+\infty}\int_0^T\int_\mathcal{O}ud\nu^{v^n}=\int_0^T\int_\mathcal{O}ud\nu^{v}.$$
\end{lemma}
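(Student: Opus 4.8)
The plan is to prove the statement first for $\varphi$ lying in the space of test functions $\mathcal{W}_T\cap\mathcal{C}$, where it reduces to plain weak convergence, and then to pass to a general quasi-continuous $u$ by a quasi-uniform approximation backed by a uniform-in-$n$ control of the measures $\nu^{v^n}$ in terms of the parabolic capacity. Before anything else I would check that the weak limit $v$ belongs to $\mathcal{P}$, so that $\nu^v$ is well defined: a sequence bounded in $\mathcal{K}=L^\infty([0,T];L^2(\mathcal{O}))\cap L^2([0,T];H_0^1(\mathcal{O}))$ keeps its $L^\infty([0,T];L^2(\mathcal{O}))$ bound under weak-$*$ limits, so $v\in\mathcal{K}$ with $\|v\|_{\mathcal{K}}\le\liminf_n\|v^n\|_{\mathcal{K}}$, and for each fixed $\varphi\in\mathcal{W}_T^+$ the map $w\mapsto\int_0^T-(\partial_t\varphi_t,w_t)\,dt+\int_0^T\mathcal{E}(\varphi_t,w_t)\,dt$ is continuous for the weak topology of $L^2([0,T];H_0^1(\mathcal{O}))$ (the first term because $\partial_t\varphi\in L^2([0,T];H^{-1}(\mathcal{O}))$ pairs with $w$, the second by boundedness of $\mathcal{E}$), so the defining inequality of $\mathcal{P}$ is inherited by $v$.

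The core step is then immediate: for $\varphi\in\mathcal{W}_T\cap\mathcal{C}$, Proposition \ref{presentation} gives
$$\int_0^T\int_{\mathcal{O}}\varphi\,d\nu^{v^n}=\int_0^T-(\partial_t\varphi_t,v^n_t)\,dt+\int_0^T\mathcal{E}(\varphi_t,v^n_t)\,dt,$$
and the right-hand side is a fixed continuous linear functional of $v^n$ on $L^2([0,T];H_0^1(\mathcal{O}))$; hence $\int_0^T\int_{\mathcal{O}}\varphi\,d\nu^{v^n}\to\int_0^T\int_{\mathcal{O}}\varphi\,d\nu^{v}$ for every such $\varphi$.

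To reach a general quasi-continuous $u$ with $|u|\le w$, $w\in\mathcal{P}$, I would use the quasi-continuity of $u$ together with a regularization (in the spirit of \cite{Pierre,PIERRE}) to produce, for each $\eps>0$, a test function $\varphi_\eps\in\mathcal{W}_T\cap\mathcal{C}$ and an open set $O_\eps$ with $cap(O_\eps)<\eps$ such that $\varphi_\eps=u$ quasi-everywhere off $O_\eps$ and $|\varphi_\eps|$ is bounded by a fixed element of $\mathcal{P}$. Off $O_\eps$ the two integrands agree quasi-everywhere, and since regular measures do not charge polar sets the whole discrepancy comes from $O_\eps$, where $|u-\varphi_\eps|\le 2w$. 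The essential point is that this discrepancy is small \emph{uniformly in $n$}: one shows $\sup_n\nu^{v^n}(O)\to0$ as $cap(O)\to0$ — that is, the family $\{\nu^{v^n}\}_n$ is equi-absolutely continuous with respect to the parabolic capacity — by comparing each $\nu^{v^n}$ with the extremal potential $v_O$ (which satisfies $v_O\ge1$ q.e.\ on $O$, has $\int_0^T\int_{\mathcal{O}}d\nu^{v_O}=cap(O)$, and whose $\mathcal{K}$-norm vanishes with $cap(O)$) and using the duality pairing between elements of $\mathcal{P}$ and regular measures together with $\sup_n\|v^n\|_{\mathcal{K}}<\infty$. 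Consequently $\sup_n\bigl|\int_0^T\int_{\mathcal{O}}(u-\varphi_\eps)\,d\nu^{v^n}\bigr|\to0$ as $\eps\to0$, and the same bound holds with $\nu^{v^n}$ replaced by $\nu^v$; combining this with the core step through a standard three-step approximation argument yields the claim.

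The main obstacle is precisely this last uniform capacitary control of the masses $\nu^{v^n}$, together with the construction of the dominated smooth approximant $\varphi_\eps$ — these are the places where one must invoke the fine structure of Pierre's parabolic potential theory (the extremal potentials $v_K$, the fact that regular measures put no mass on polar sets, and the duality between $\mathcal{P}$ and regular measures), rather than soft functional analysis; the weak-convergence ingredient, by contrast, is routine.
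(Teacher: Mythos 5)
First, a point of comparison: the paper does not prove this lemma at all --- it is quoted as Lemma 3.8 of \cite{PIERRE} and used as an imported tool --- so there is no in-paper argument to measure yours against. Your outline is nevertheless the natural route and matches the spirit of Pierre's theory: (i) checking $v\in\mathcal{P}$ by weak (lower semi-)continuity of the defining inequality is correct, since for fixed $\varphi\in\mathcal{W}_T^+$ the functional $w\mapsto\int_0^T-(\partial_t\varphi_t,w_t)\,dt+\int_0^T\mathcal{E}(\varphi_t,w_t)\,dt$ is weakly continuous on $L^2([0,T];H_0^1(\mathcal{O}))$; (ii) the convergence for $\varphi\in\mathcal{W}_T\cap\mathcal{C}$ follows immediately from Proposition \ref{presentation} and weak convergence, as you say. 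Steps (i)--(ii) are complete.

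The gap is that step (iii), which is the entire content of the lemma, rests on two assertions you do not establish. (a) The equi-absolute continuity $\sup_n\nu^{v^n}(O)\to0$ as $cap(O)\to0$ requires a duality inequality of the type $\int\tilde w\,d\nu^{v}\le C\|w\|_{\mathcal{K}}\|v\|_{\mathcal{K}}$ for $w,v\in\mathcal{P}$ (applied with $w=v_O$), together with the comparability of $cap(O)$ and $\|v_O\|_{\mathcal{K}}^2$; neither fact is stated in this paper, and the first is itself a nontrivial integration-by-parts result of Pierre's potential theory, roughly as deep as the lemma being proved. (b) The existence, for each $\varepsilon>0$, of $\varphi_\varepsilon\in\mathcal{W}_T\cap\mathcal{C}$ equal to $u$ off an open set of capacity less than $\varepsilon$ \emph{and} dominated in absolute value by a fixed element of $\mathcal{P}$ is a strong extension statement (a Tietze-type extension that must preserve membership in $\mathcal{W}_T$, compact support in $[0,T[\times\mathcal{O}$, and the domination); quasi-continuity of $u$ alone does not deliver it. Moreover, your final estimate needs not just $\sup_n\nu^{v^n}(O_\varepsilon)$ small but $\sup_n\int_{O_\varepsilon}\tilde w\,d\nu^{v^n}$ small, i.e.\ a weighted, quantitative version of (a). As written, then, this is a correct roadmap whose two load-bearing steps are exactly the fine capacitary estimates of \cite{PIERRE} that the paper avoids by citing the lemma outright; to count as a proof, those two inputs would have to be proved or precisely referenced.
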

%\begin{remark}For the more general case one can see \cite{PIERRE} Lemma 3.8. \end{remark}
\vspace{0.5cm}

We now give the assumptions on the obstacle that we shall need in the different cases that we shall consider.\\

\textbf{Assumption (O):} The obstacle $S: [0,T]\times \Omega\times \cO\rightarrow \R$ is  an
adapted random field  almost surely quasi-continuous, in the sense that for $P$-almost all $\omega\in\Omega$, the map $(t,x)\rightarrow S_t (\omega,x)$ is quasi-continuous.  Moreover,  $S_0 \leq \xi$ $P$-almost surely and $S$ is
controlled by the solution of an SPDE, i.e. $\forall t\in[0,T],$
\begin{equation}S_t\leq S'_t,\quad dP\otimes dt\otimes dx-a.e.\end{equation} where
$S'$ is the solution of the linear SPDE
\begin{equation}\left\{\begin{array}{ccl} \label{obstacle}
 dS'_t&=&LS'_tdt+f'_tdt+\sum_{i=1}^d \partial_i g'_{i,t}dt+\sum_{j=1}^{+\infty}h'_{j,t}dB^j_t\\
                  S'(0)&=&S'_0 ,
\end{array}\right. \end{equation}
with null boundary Dirichlet conditions.\\

\textbf{Assumption (HO2)}   $$  E \left(\|\xi \|_2^2   +\left\| f'\right\|_{2,2;T}^2+\left\|
|g'|\right\| _{2,2;T}^2+\left\| |h'|\right\| _{2,2;T}^2\right) <\infty .
$$
\textbf{Assumption (HO\#)}
$$ E\left(  \|S'_0 \|_2^2+\left( \left\| f'\right\|_{\#;T}^*\right) ^2+\left\|
|g'|\right\| _{2,2;T}^2+\left\| |h'|\right\| _{2,2;T}^2\right) <\infty .
$$
\begin{remark}\label{remark3} It is well-known that under {\bf (HO2)} $S'$ belongs to $\cH_T$, is unique and satisfies the following
estimate:
\begin{equation}\label{estimobstacle1}
E\sup_{t\in[0,T]}\parallel S'_t\parallel^2+E\int_0^T\mathcal{E}(S'_t)dt\leq CE\left[\parallel S'_0\parallel^2+\int_0^T(\parallel f'_t\parallel^2+\parallel |g'_t|\parallel^2+\parallel |h'_t|\parallel^2)dt\right],
\end{equation}
see for example Theorem 8 in \cite{DenisStoica}. Moreover, as a consequence of  Theorem 3 in \cite{DMZ12}, we know that $S'$ admits a
quasi-continuous version.\\
Under the weaker condition {\bf(HO\#)}, $S'$ also exists, is unique and satisfies the following estimate (see Theorem 3 in \cite{DMS09}):
\begin{equation}\label{estimobstacle2}
E\sup_{t\in[0,T]}\parallel S'_t\parallel^2+E\int_0^T\mathcal{E}(S'_t)dt\leq CE\left[\parallel S'_0\parallel^2+\int_0^T\left((\parallel f'_t\parallel_\#^*)^2+\parallel |g'_t|\parallel^2+\parallel |h'_t|\parallel^2\right)dt\right].
\end{equation}
\end{remark}
\begin{definition} A pair
$(u,\nu)$ is said to be a solution of the problem (\ref{SPDEO}) if
\begin{enumerate}
    \item $u\in\mathcal{H}_T$, $u(t,x)\geq S(t,x),\ dP\otimes dt\otimes
    dx-a.e.$ and $u_0(x)=\xi,\ dP\otimes dx-a.e.$;
    \item $\nu$ is a random regular measure defined on
    $[0,T[\times\mathcal{O}$;
    \item the following relation holds almost surely, for all
    $t\in[0,T]$ and all $\varphi\in\mathcal{D}$,
     \begin{equation}\begin{split}\label{solution}(u_t,\varphi_t)=&(\xi,\varphi_0)+\int_0^t(u_s,\partial_s\varphi_s)ds-\int_0^t\mathcal{E}(u_s,\varphi_s)ds\\&-\sum_{i=1}^d\int_0^t(g^i_s(u_s,\nabla u_s),\partial_i\varphi_s)ds
    +\int_0^t(f_s(u_s,\nabla u_s),\varphi_s)ds\\&+\sum_{j=1}^{+\infty}\int_0^t(h^j_s(u_s,\nabla u_s),\varphi_s)dB^j_s+\int_0^t\int_{\mathcal{O}}\varphi_s(x)\nu(dx,ds);\end{split}\end{equation}
    \item $u$ admits a quasi-continuous version, $\tilde{u}$, and we have  $$\int_0^T\int_\cO(\tilde{u}(s,x)-S(s,x))\nu(dx,ds)=0,\ \
    P-a.s.$$
  \end{enumerate}
\end{definition}
%We denote by $\cR(\xi,f,g,h,S)$ the solution of the obstacle problem
%when it exists and it is unique.
Finally, in the sequel, we introduce some constants $\epsilon$, $\delta>0$, we shall denote by $C_\epsilon$, $C_\delta$ some constants depending only on  $\epsilon$, $\delta$, typically those appearing in the kind of inequality
\begin{equation}\label{young}|ab|\leq \epsilon a^2 + C_\epsilon b^2.\end{equation}
%$(u,\nu)$ is the solution of the following SPDE with obstacle $S$
%dominated by
%$S'$\begin{equation}\label{OSPDE}du_t+Au_tdt=f_t(u_t,\nabla
%u_t)dt+div g_t(u_t,\nabla u_t)dt+h_t(u_t,\nabla
%u_t)dB_t+\nu(x,dt)\end{equation} and $S'$ satisfies the following
%SPDE \begin{equation}\label{obstacle}dS'_t+AS'_tdt=f'_tdt+div
%g'_tdt+h'_tdB_t\end{equation} Therefore,
%(\ref{OSPDE})-(\ref{obstacle}), $u-S'$ satisfies
%\begin{equation*}d(u_t-S'_t)+A(u_t-S'_t)dt=(f_t-f'_t)dt+div(g_t-g'_t)dt+(h_t-h'_t)dB_t+\nu(x,dt)\end{equation*}

\section{Main results}
In this section, we prove the existence and uniqueness under a weaker integrability on $f^0$ and $S'$, improving the results obtained in \cite{DMZ12}, Theorem 4 and then give an It\^o formula and estimate for the positive part of the solution, which is a crucial step leading to the comparison theorem.
Let us note that these results have been established in the case of SPDE without obstacle (see Section 3 in \cite{DMS09} and \cite{DM11}).
%All along this section, we suppose that {\bf (H)}, {\bf (O)}, {\bf (HI\#)} and {\bf (HO\#)} hold.
\subsection{Existence and uniqueness}\label{section3.1}
To get the estimates we need,  we apply It\^o's formula to $u-S'$, in order to take advantage of the fact that $S-S'$ is non-positive and that
as $u$ is solution of (\ref{SPDEO}) and $S'$ satisfies
(\ref{obstacle}), $u-S'$ satisfies
\begin{equation}\label{uminusS'}\left\{ \begin{split}&d(u_t-S'_t)=\partial_i  (a_{i,j}(x)\partial_j(u_t(x)-S'_t(x)))dt+(f(t,x,u_t(x),\nabla
u_t(x))-f'(t,x))dt \\&+\partial_i(g_i(t,x,u_t(x),\nabla
u_t(x))-g'_i(t,x))dt+(h_j(t,x,u_t(x),\nabla
u_t(x))-h'_j(t,x))dB^j_t\\&+\nu(x,dt), \\
 &(u-S')_0=\xi-S'_0\, ,\\&u-S'\geq S-S'\, . \end{split}\right.\end{equation}
that is why we introduce the following functions: \begin{eqnarray}\label{BAR}&&\bar{f}(t,\omega,x,y,z)=f(t,\omega,x,y+S'_t,z+\nabla
S'_t)-f'(t,\omega,x);\nonumber\\&&
\bar{g}(t,\omega,x,y,z)=g(t,\omega,x,y+S'_t,z+\nabla S'_t)-g'(t,\omega,x);\nonumber\\&&
\bar{h}(t,\omega,x,y,z)=h(t,\omega,x,y+S'_t,z+\nabla S'_t)-h'(t,\omega,x).\end{eqnarray}
Let us remark that the Skohorod condition for $u-S'$ is satisfied
since
$$ \int_0^T\int_{\cO} (u_s (x)-S'_s (x))-(S_s (x)-S'_s (x))\nu (ds ,dx)=\int_0^T\int_{\cO} (u_s (x)-S_s (x))\nu (ds ,dx)=0.$$
It is obvious that $\bar{f}$, $\bar{g}$ and $\bar{h}$ satisfy
the Lipschitz conditions with the same Lipschitz coefficients as $f$,
$g$ and $h$. Then, using Remark \ref{injL2}, we check the integrability conditions for
$\bar{f}^0$, $\bar{g}^0$ and $\bar{h}^0$:
\begin{eqnarray*}\left\|\bar{f}^0\right\|_{\#;T}^*&=&\left\|f(S',\nabla S')-f'\right\|_{\#;T}^*\leq\left\|f(S',\nabla S')\right\|_{\#;T}^*+\left\|f'\right\|_{\#;T}^*\\&\leq&\left\|f^0\right\|_{\#;T}^*+C\left\|S'\right\|_{\#;T}^*+C\left\|\nabla S'\right\|_{\#;T}^*+\left\|f'\right\|_{\#;T}^*\\&\leq&\left\|f^0\right\|_{\#;T}^*+C\sqrt{t}\left\|S'\right\|_{2,2;T}+C\sqrt{T}\left\|\nabla S'\right\|_{2,2;T}+\left\|f'\right\|_{\#;T}^* .\end{eqnarray*}
We know that (see Remark \ref{remark3}): $$E\left(\left\|S'\right\|^2_{2,2;T}+\left\|\nabla S'\right\|^2_{2,2;T}\right)<\infty .$$
Hence, for each $t$, we have
$$E\left(\left\|\bar{f}^0\right\|_{\#;T}^*\right)^2<\infty.$$
We also have: \begin{eqnarray*}\left\|\bar{g}^0\right\|
_{2,2;T}&=&\left\|\bar{g}(S',\nabla S')-g'\right\|
_{2,2;T}\leq\left\|\bar{g}(S',\nabla S')\right\|
_{2,2;T}+\left\|g'\right\| _{2,2;T}\\&\leq&\left\|g^0\right\|
_{2,2;T}+C\left\|S'\right\| _{2,2;T}+\alpha\left\|\nabla S'\right\|
_{2,2;T}+\left\|g'\right\| _{2,2;T}<\infty .\end{eqnarray*}
 And
the same thing for $\bar{h}$. Hence,
\begin{equation}\label{differenceweaker} E\left( \left( \left\| \bar{f}\right\|_{\#;T}^*\right) ^2+\left\|
\bar{g}\right\| _{2,2;T}^2+\left\| \bar{h}\right\|
_{2,2;T}^2\right) <\infty . \end{equation}

We now state the main Theorem of this subsection:
\begin{theorem}\label{2estimate} Under  conditions {\bf (H)}, {\bf (O)}, {\bf (HI\#)} and {\bf (HO\#)}, the obstacle problem (\ref{SPDEO}) admits
 a unique solution $(u,\nu)$, where $u$ is in $\cH_T$ and $\nu$ is a random regular measure.
%Moreover, the solution has a version with $L^2(\cO)-$continuous trajectories and it satisfies the following estimate:
%\begin{eqnarray*}E\left(\left\|u\right\|^2_{2,\infty;t}+\left\|\nabla u\right\|^2_{2,2;t}\right)&\leq& k(t)E(\left\|\xi-S'_0\right\|^2_2+\left(\left\|\bar{f}^0\right\|%^*_{\#;t}\right)^2+\left\|\bar{g}^0\right\|^2_{2,2;t}+\left\|\bar{h}^0\right\|^2_{2,2;t}\\&+&\left\|S'_0\right\|^2_2+\left(\left\|f'\right\|^*_{\#;t}\right)^2+
%\left\|g'\right\|^2_{2,2;t}+\left\|h'\right\|^2_{2,2;t}).\end{eqnarray*}
%for each $t\geq0$, where $k(t)$ is a constant that only depends on the structure constants and $t$.
We denote by $\cR^{\#}(\xi,f,g,h,S)$ the solution of OSPDE \eqref{SPDEO} when it exists and is unique.
\end{theorem}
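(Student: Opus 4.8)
The plan is to derive the statement from Theorem~4 of \cite{DMZ12} (which handles the stronger $L^2$-type conditions) by an approximation procedure. First, by the change of unknown $v=u-S'$ and the functions $\bar f,\bar g,\bar h$ introduced in \eqref{BAR}, solving \eqref{SPDEO} is equivalent to solving the obstacle problem with data $(\xi-S'_0,\bar f,\bar g,\bar h,\bar S)$ where $\bar S:=S-S'\le 0$. This new obstacle is quasi-continuous (as a difference of quasi-continuous functions, using Remark~\ref{remark3}), satisfies $\bar S_0=S_0-S'_0\le\xi-S'_0$, and is controlled by the trivial solution $0$ of a linear SPDE, for which \textbf{(HO2)} holds trivially; the new coefficients are Lipschitz with the same constants as $f,g,h$, one has $\bar g^0,\bar h^0\in L^{2,2}$, and $E\big((\|\bar f^0\|_{\#;T}^*)^2\big)<\infty$ by \eqref{differenceweaker}. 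Hence we may assume from now on that $S\le 0$, $S'\equiv 0$, that $\xi-S'_0$ is replaced by $\xi\in L^2$ with $\xi\ge S_0$, that $g^0,h^0\in L^{2,2}$, and that $f^0$ satisfies only \textbf{(HI\#)}.

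Next, choose predictable $f^{0,n}\in L^{2,2}([0,T]\times\cO)$ with $E\big((\|f^{0,n}-f^0\|_{\#;T}^*)^2\big)\to 0$; this is possible by truncating $f^0$ in space and time, since $L^{2,2}$ is dense in each of the extreme spaces $L^{2,1}$ and $L^{2^*/(2^*-1),2}$ and by Remark~\ref{injL2}. Set $f^n(t,\omega,x,y,z):=f(t,\omega,x,y,z)-f^0(t,\omega,x)+f^{0,n}(t,\omega,x)$, which is Lipschitz with the same constants as $f$ and has $f^n(\cdot,\cdot,\cdot,0,0)=f^{0,n}$. Then all $L^2$-integrability conditions hold for the data $(\xi,f^n,g,h,S)$, so by Theorem~4 of \cite{DMZ12} the corresponding obstacle problem has a unique solution $(u^n,\nu^n)$ with $u^n\in\cH_T$.

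The core of the argument is a uniform estimate. Apply the It\^o formula of \cite{DMZ12} to $\|u^n_t-u^m_t\|_2^2$; the measure part of the expansion is, up to the positive factor $2$,
\[
\int_0^t\!\!\int_\cO(\tilde u^n_s-\tilde u^m_s)\,d(\nu^n-\nu^m)=-\int_0^t\!\!\int_\cO(\tilde u^n_s-S_s)\,d\nu^m-\int_0^t\!\!\int_\cO(\tilde u^m_s-S_s)\,d\nu^n\ \le\ 0,
\]
because $\int(\tilde u^n-S)\,d\nu^n=\int(\tilde u^m-S)\,d\nu^m=0$ (the minimal Skorohod condition), $\tilde u^n,\tilde u^m\ge S$, $\nu^n,\nu^m\ge 0$, and these integrals are meaningful since regular measures do not charge polar sets while $\tilde u^n,\tilde u^m,S$ are quasi-continuous. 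So the measure terms disappear and what remains is precisely the computation done for the SPDE without obstacle in \cite{DMS09}: uniform ellipticity together with the contraction condition \textbf{(H)}(4) absorbs the gradient term, the Lipschitz parts are handled by Gronwall, and the term coming from $f^{0,n}-f^{0,m}$ is controlled by the duality inequality \eqref{dual2}, then \eqref{sobolev} and absorption into the left-hand side. With the Burkholder--Davis--Gundy inequality for the martingale part this gives $E\|u^n-u^m\|_{\cH_T}^2\le C\,E\big((\|f^{0,n}-f^{0,m}\|_{\#;T}^*)^2\big)\to 0$, so $u^n\to u$ in $\cH_T$; the same estimate against the solution with $f^0$ replaced by $0$ (which satisfies \textbf{(HI2)}) gives $\sup_n E\|u^n\|_{\cH_T}^2<\infty$.

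It remains to pass to the limit and to prove uniqueness. Writing \eqref{solution} for $u^n$, all the integrals except the measure one converge as $n\to\infty$, for every $\varphi\in\mathcal{D}$, so $\int_0^t\int_\cO\varphi_s\,d\nu^n$ converges; the limit is a nonnegative distribution, hence a positive Radon measure $\nu$, and it is a \emph{regular} measure (of the form $\partial w/\partial t+Aw$ with $w\in\mathcal{P}$) because the parabolic potentials associated with $\nu^n$ are bounded in $\mathcal{K}$ and converge weakly, so that Proposition~\ref{presentation} applies; moreover $u$ admits a quasi-continuous version $\tilde u$, obtained from a capacity estimate on $u^n-u^m$ exactly as in \cite{DMZ12}. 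Thus $(u,\nu)$ satisfies items 1--3 of the definition of a solution; for item 4, Lemma~\ref{convergemeas} gives $\int_0^T\int_\cO\tilde u^n\,d\nu^n\to\int_0^T\int_\cO\tilde u\,d\nu$ and $\int_0^T\int_\cO S\,d\nu^n\to\int_0^T\int_\cO S\,d\nu$, and since $\int(\tilde u^n-S)\,d\nu^n=0$ for every $n$ we conclude $\int_0^T\int_\cO(\tilde u-S)\,d\nu=0$. Uniqueness is the same It\^o computation: for two solutions $(u^1,\nu^1),(u^2,\nu^2)$ the measure contribution to $\|u^1_t-u^2_t\|_2^2$ equals $-2\int(\tilde u^1-S)\,d\nu^2-2\int(\tilde u^2-S)\,d\nu^1\le 0$ and drops out, whence Gronwall forces $u^1=u^2$ in $\cH_T$ and then $\nu^1=\nu^2$ by \eqref{solution}. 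The two genuinely delicate points are this passage to the limit for the measures $\nu^n$ — producing a bona fide random regular measure via Proposition~\ref{presentation} — and the existence of a quasi-continuous version of the limit $u$, which is what allows the minimal Skorohod condition to survive the limit; the a priori estimate itself goes through under the weaker integrability only because the measure terms always carry the favourable sign.
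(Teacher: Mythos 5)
Your argument is correct in outline but takes a genuinely different route from the paper's. The paper also reduces to the $L^2$ situation of Theorem 4 in \cite{DMZ12}, but by a decomposition of the \emph{unknown} rather than an approximation of the \emph{data}: it introduces the auxiliary linear PDE $dw_t+Aw_tdt=f^0_tdt$ (Lemma \ref{wquasicontinue}), notes that $u-w$ solves an obstacle problem whose zero-order coefficient $F^0=f(\cdot,w,\nabla w)-f^0$ is square integrable because $w\in\cH_T$, and whose dominating solution $S'-w$ still satisfies \textbf{(HO\#)}; its Step 1 (penalization under \textbf{(HI2)}+\textbf{(HO\#)}, using the duality inequality \eqref{dual2} and \eqref{sobolev} exactly as you do) then applies to $u-w$. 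All the difficulty created by the weak integrability of $f^0$ is thus concentrated in one linear equation, for which quasi-continuity is obtained by comparison with jointly continuous solutions and a capacity/r\'eduite estimate. You instead approximate $f^0$ by $f^{0,n}\in L^{2,2}$ and pass to the limit in the family of obstacle problems; this is precisely the scheme the paper runs later in the proof of Theorem \ref{Itosweaker}, so the Cauchy estimate, the convergence of the potentials $v^n$ via $z^n=u^n-v^n$, and the identification of the limit measure are all legitimately available. What your route buys is that Lemma \ref{wquasicontinue} is never needed as a separate statement; what it costs is that every limit-passage issue for obstacle problems must be redone for your approximating sequence instead of being quoted for the penalized one.

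The one step I would not accept as written is the quasi-continuity of the limit $u$, which you dispatch with ``a capacity estimate on $u^n-u^m$ exactly as in \cite{DMZ12}''. Those capacity estimates (and the one in Lemma \ref{wquasicontinue}) rest on dominating $|u^n-u^m|$ by a parabolic potential whose $\mathcal{K}$-norm is controlled by the data; for your sequence the equation satisfied by $u^n-u^m$ carries the \emph{signed} measure $\nu^n-\nu^m$, so such a domination is not immediate and the estimate does not transfer verbatim. The standard repair is to run the capacity argument on $z^n-z^m=(u^n-v^n)-(u^m-v^m)$, which solves a measure-free SPDE, and to handle the potentials $v^n$ separately through their convergence in $\mathcal{K}$ and Pierre's results on precise versions of elements of $\cP$ --- or simply to adopt the paper's $w$-decomposition, which exists precisely to avoid this point. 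A similar, repairable, elision occurs when you let $n\to\infty$ in $\int_0^T\int_\cO(\tilde u^n-S)\,d\nu^n=0$: Lemma \ref{convergemeas} treats a \emph{fixed} quasi-continuous integrand, so the term $\int_0^T\int_\cO(\tilde u^n-\tilde u)\,d\nu^n$ requires a separate uniform control. Finally, note that Remark \ref{remark3} asserts the quasi-continuity of $S'$ only under \textbf{(HO2)}; under \textbf{(HO\#)} your initial shift by $S'$ already needs a $w$-type argument to make sense of the Skorohod condition for $u-S'$, so Lemma \ref{wquasicontinue} is being used implicitly even on your route.
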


For the proof of this theorem, we need the following two lemmas whose proofs are  given in the appendix. The first lemma concerns It\^o's formula for the solution of SPDE (\ref{SPDEO}) without obstacle under {\bf(H)} and {\bf(HI\#)}. Let us remark that in \cite{DMS09}, the existence and uniqueness result has been established but not It\^o's formula.
\begin{lemma}\label{Itospdeweaker}
Under the assumptions {\bf(H)} and {\bf(HI\#)}, the SPDE (\ref{SPDEO}) without obstacle admits a unique solution $u\in \cH_T$.  Moreover, it satisfies It\^o's formula i.e.
if $\varphi:\bbR\rightarrow\bbR$ is a function of class $C^2$ such that $\varphi''$ is bounded and $\varphi'(0)=0$, then the following relation holds almost surely, for all $t\in [0,T]$,
\begin{eqnarray}\label{It\^o'sspdeweaker}&&
\int_{{\cal O}}\varphi \left( u_t\left( x\right) \right)
dx+\int_0^t{\cal E} \left( \varphi ^{\prime }\left( u_s\right)
,u_s\right) ds=\int_{{\cal O} }\varphi \left( \xi \left( x\right)
\right) dx+\int_0^t\left( \varphi ^{\prime }\left( u_s\right)
,f_s \left( u_s ,\nabla u_s \right)\right) ds\nonumber
\\&&-\int_0^t\sum_{i=1}^d\left(
\partial _i\left( \varphi ^{\prime }\left( u_s\right) \right)
,g_{i,s}(u_s,\nabla u_s \right) ds+\frac 12\int_0^t\left(
\varphi ^{\prime \prime }\left( u_s\right) ,\left| h_{s} (u_s
,\nabla u_s )\right| ^2\right) ds
\nonumber\\&&+\sum_{j=1}^{\infty}\int_0^t\left( \varphi ^{\prime }\left(
u_s\right) ,h_{j,s} (u_s ,\nabla u_s )\right)
dB_s^j .
\end{eqnarray}
\end{lemma}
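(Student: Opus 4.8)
\textbf{Proof proposal for Lemma \ref{Itospdeweaker}.}

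The plan is to obtain the solution under the weaker integrability hypothesis \textbf{(HI\#)} by an approximation argument starting from the known $L^2$-theory of \cite{DMZ12}, and then to pass to the limit in It\^o's formula. First I would truncate the data: replace $f^0$ by $f^{0,n}:=f^0\mathbf{1}_{|f^0|\le n}$ and similarly keep $\xi$, $g^0$, $h^0$ unchanged since they already satisfy the $L^{2,2}$ conditions; this gives a sequence of coefficients satisfying \textbf{(HI2)}, hence by \cite{DMZ12} (or rather the SPDE-without-obstacle version, as recalled via \cite{DMS09} and Theorem 8 in \cite{DenisStoica}) a sequence of solutions $u^n\in\cH_T$ to which the $L^2$-It\^o formula of \cite{DMZ12} applies. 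The core analytic input is the a priori estimate in the $\#$-norm: applying the (already valid) It\^o formula to $\varphi(u^n_t)$ with $\varphi(x)\approx x^2$, using ellipticity \eqref{energy}, the Lipschitz bounds \textbf{(H)} with the contraction property $2\alpha+\beta^2<2\lambda$ to absorb the gradient terms, the duality inequality \eqref{dual2} to handle $\int_0^t(\varphi'(u^n_s),f^{0,n}_s)ds\le C\|f^{0,n}\|^*_{\#;t}\,\|u^n\|_{\#;t}$, Burkholder--Davis--Gundy for the martingale part, and finally Gronwall, one gets
\[
E\sup_{s\le T}\|u^n_s\|_2^2+E\int_0^T\cE(u^n_s)\,ds\le C\,E\Big(\|\xi\|_2^2+(\|f^{0}\|^*_{\#;T})^2+\| |g^0|\|_{2,2;T}^2+\| |h^0|\|_{2,2;T}^2\Big),
\]
uniformly in $n$; in particular $(u^n)$ is bounded in $\cH_T$. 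The same computation applied to the difference $u^n-u^m$ shows $(u^n)$ is Cauchy in $\cH_T$, yielding a limit $u\in\cH_T$ which is easily checked to solve the SPDE in the weak sense \eqref{solution} (without $\nu$); uniqueness follows by the same difference estimate applied to two solutions.

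Next I would establish the It\^o formula for $u$. For each $n$, $u^n$ satisfies \eqref{It\^o'sspdeweaker} with $f$ replaced by its $n$-truncation. I would pass to the limit term by term along $n\to\infty$: the convergence $u^n\to u$ in $\cH_T$ gives $\int_\cO\varphi(u^n_t)\,dx\to\int_\cO\varphi(u_t)\,dx$ and $\int_\cO\varphi(\xi)\,dx$ is unchanged; the boundedness of $\varphi''$ (hence the Lipschitz character of $\varphi'$ and the linear growth of $\varphi$ and $\varphi'$, using $\varphi'(0)=0$) together with the Lipschitz assumptions on $f,g,h$ and the convergence of $\nabla u^n$ to $\nabla u$ in $L^2([0,T]\times\cO)$ in probability handles the energy term $\cE(\varphi'(u^n_s),u^n_s)$, the drift term $(\varphi'(u^n_s),f^n_s(u^n_s,\nabla u^n_s))$ — here one also needs $f^{0,n}\to f^0$ in $L^{\#}$, which holds by dominated convergence since $|f^{0,n}|\le|f^0|\in L^{\#;T}$ and the duality pairing is continuous — the term $\sum_i(\partial_i(\varphi'(u^n_s)),g_{i,s})$, the quadratic-variation term $\tfrac12(\varphi''(u^n_s),|h_s(u^n_s,\nabla u^n_s)|^2)$, and, via BDG again, the stochastic integral. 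Up to a subsequence all convergences are almost sure for fixed $t$, and since both sides are (a.s.) continuous in $t$ the identity holds simultaneously for all $t\in[0,T]$.

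The main obstacle is the drift term involving $f$: because $f^0$ is only in the sum space $L^{\#}_T$ and not in $L^{2,2}$, one cannot estimate $\int_0^t(\varphi'(u_s),f_s(u_s,\nabla u_s))\,ds$ by Cauchy--Schwarz in $L^2([0,T]\times\cO)$; the whole point is that $\varphi'(u)$, being controlled by $|u|$, lies in $L_{\#;t}$ thanks to the Sobolev-type embedding \eqref{sobolev} (here the pair $(2,\infty)$--$(2^*,2)$ and the bound $\|\cdot\|_{\#;t}\le c_1(\|\cdot\|_{2,\infty;t}^2+\|\nabla\cdot\|_{2,2;t}^2)^{1/2}$ are exactly what is needed), so that the duality \eqref{dual2} can be invoked. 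Getting this coupling right — i.e. that the a priori bound on $\|u^n\|_{2,\infty;t}$ and $\|\nabla u^n\|_{2,2;t}$ feeds back, through \eqref{sobolev} and \eqref{dual2}, into controlling the very term that generates those bounds — and then making the Gronwall argument close, is the delicate point; everything else is a routine limiting argument. A minor technical care is also needed because $\varphi'$ need not be compactly supported or itself $C^2$, so to apply the $L^2$-It\^o formula of \cite{DMZ12} one first approximates $\varphi$ by functions for which that formula is directly available and passes to the limit, using only $\|\varphi''\|_\infty<\infty$ and $\varphi'(0)=0$.
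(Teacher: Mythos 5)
Your proposal is correct in substance and rests on the same key mechanism as the paper: approximate $f^0$ by functions for which the $L^{2,2}$ theory and its It\^o formula apply, control everything through the duality \eqref{dual2} together with the Sobolev-type bound \eqref{sobolev} on the $\#$-norm, and pass to the limit term by term. The structural difference is in how the approximating sequence is built. You solve the \emph{nonlinear} SPDE with truncated zero-order data $f^{0,n}$, so you must re-derive existence and uniqueness and your Cauchy estimate for $u^n-u^m$ has to absorb the differences $f(u^n,\nabla u^n)-f(u^m,\nabla u^m)$, $g(\cdot)-g(\cdot)$, $h(\cdot)-h(\cdot)$ via the Lipschitz constants and the contraction property $2\alpha+\beta^2<2\lambda$. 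The paper instead takes existence and uniqueness of $u$ as already known from \cite{DMS09} (the lemma's only new content is the It\^o formula) and \emph{freezes} the nonlinearity at $u$: it sets $\breve g=g(\cdot,u,\nabla u)$, $\breve h=h(\cdot,u,\nabla u)$, $f^n=f(\cdot,u,\nabla u)-f^0+f^0_n$ and solves the resulting \emph{linear} equations. Then the difference $u^n-u^m$ satisfies a linear equation whose only data is $f^0_n-f^0_m$, the Cauchy estimate is immediate without any contraction bookkeeping, and the limit is identified with $u$ by uniqueness of the linear problem. Your route is more self-contained (it does not need to invoke the existence theorem of \cite{DMS09}) at the price of a heavier, though entirely standard, fixed-point-type estimate; the paper's route is leaner precisely because it only has to produce the It\^o formula.

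One small point to tighten: the pointwise truncation $f^{0,n}=f^0\mathbf{1}_{\{|f^0|\le n\}}$ need not belong to $L^{2,2}([0,t]\times\cO)$ when $f^0$ merely lies in the sum space $L^{2,1}+L^{\frac{2^*}{2^*-1},2}$ (boundedness in $x$ does not upgrade $L^1$ integrability in time to $L^2$, nor does it help on an unbounded domain), so \textbf{(HI2)} is not automatically satisfied by your specific choice. This is easily repaired — truncate also in time, or simply take any sequence of bounded elements of $L^{2,2}$ converging to $f^0$ in the $\|\cdot\|^*_{\#;t}$ norm, which is what the paper does — but as written the sentence claiming \textbf{(HI2)} for the truncated coefficients is not justified.
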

The following lemma will be helpful in showing that the solution to problem (\ref{SPDEO}) is quasi-continuous.
\begin{lemma}\label{wquasicontinue}The  following  PDE with random coefficient $f^0$ and zero Dirichlet boundary condition
\begin{equation} \label{eq:w}
\left\{ \begin{split}
         &dw_t+Aw_tdt=f_t^0dt\\
                  & w_0=0
                          \end{split} \right.
                          \end{equation}
 has a unique solution $w\in\cH_T$. Moreover, $w$ admits a quasi-continuous version.
\end{lemma}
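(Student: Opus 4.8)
The plan is to treat the deterministic-coefficient parabolic equation \eqref{eq:w} pathwise in $\omega$ (since $f^0$ is predictable and, by \textbf{(HI\#)}, $\|f^0\|^*_{\#;T}<\infty$ a.s., we may freeze $\omega$ and work with $f^0(\omega,\cdot,\cdot)\in L^*_{\#;T}$ for $P$-a.e.\ $\omega$). For such a source term, existence and uniqueness of $w\in\cK$ — hence in $\cH_T$ after taking expectations using \eqref{estimobstacle2}-type estimates — follows from the classical linear theory in $L_{loc}^\infty(\bbR_+;L^2(\cO))\cap L^2_{loc}(\bbR_+;H^1_0(\cO))$; the relevant a priori bound is exactly the one quoted in Remark \ref{remark3} for $S'$ (cf. Theorem 3 in \cite{DMS09}), applied with $g'=h'=0$ and $f'=f^0$. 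So the only substantive point is the quasi-continuity of $w$.

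For quasi-continuity I would argue by approximation. First approximate $f^0$ by smooth (or at least $L^{2,2}$, bounded) predictable functions $f^{0,n}$ such that $\|f^{0,n}-f^0\|^*_{\#;T}\to 0$ in $L^2(\Omega)$; such approximations exist because $L^{2,2}([0,T]\times\cO)$ is dense in $L^*_{\#;T}$ (it is the closure of the extreme space, and one can truncate and mollify). Let $w^n$ be the solution of \eqref{eq:w} with source $f^{0,n}$. For the smooth data, $w^n$ is a genuine weak solution in $\cW$, hence continuous in time with values in $L^2(\cO)$, and in particular $w^n$ is quasi-continuous (indeed $w^n\in\cW\subset C([0,T];L^2(\cO))$, and classical parabolic regularity gives continuity off a polar set; alternatively one invokes the fact, used already in \cite{DMZ12}, that solutions of linear SPDEs with $L^{2,2}$ data admit quasi-continuous versions — here the SPDE is deterministic). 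Then, using linearity, $w^n-w^m$ solves \eqref{eq:w} with source $f^{0,n}-f^{0,m}$, and I would establish a capacity estimate of the form
\[
\mathrm{cap}\Bigl(\bigl\{(t,x):|\widetilde{w^n}(t,x)-\widetilde{w^m}(t,x)|>\lambda\bigr\}\Bigr)\le \frac{C}{\lambda^2}\,\bigl(\|f^{0,n}-f^{0,m}\|^*_{\#;T}\bigr)^2,
\]
i.e.\ convergence in $\cK$ (or in the energy norm) controls convergence in capacity. This is the parabolic analogue of the classical fact that $H^1_0$-convergence implies convergence in (elliptic) capacity, and in the present parabolic-potential framework it is precisely the mechanism behind Proposition \ref{presentation} and Lemma \ref{convergemeas}; the key inequality relating $\|\cdot\|_\cK$ to $\mathrm{cap}$ should be extractable from \cite{Pierre,PIERRE}. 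Passing to a subsequence along which $\widetilde{w^n}$ converges quasi-uniformly, the limit is quasi-continuous and equals $w$ a.e., giving the desired quasi-continuous version.

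The main obstacle is the capacity estimate: one must show that the $\cK$-Cauchy sequence $(w^n)$ is also Cauchy "in capacity", so that the quasi-continuous representatives converge off a set of arbitrarily small capacity. This requires care because $w^n$ is not a parabolic potential (it is not sign-definite and $\partial_t w^n+Aw^n=f^{0,n}$ need not have a sign), so one cannot directly invoke the definition of $\mathrm{cap}$ via $\mathscr S_K$. The remedy is to dominate $|w^n-w^m|$ by an element of $\cP$ — for instance by solving the same equation with source $|f^{0,n}-f^{0,m}|$ and using a comparison/maximum principle, or by bounding $|w^n-w^m|$ in terms of $\sup_t\|\cdot\|_2$ and the energy and invoking the embedding \eqref{sobolev} together with the capacity–energy comparison in \cite{PIERRE}. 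Once the sequence is shown to converge in capacity, the rest is the standard Borel–Cantelli extraction of a quasi-uniformly convergent subsequence, and the limit inherits quasi-continuity; uniqueness in $\cH_T$ is immediate from the linear a priori estimate.
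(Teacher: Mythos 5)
Your proposal follows the same architecture as the paper: existence and uniqueness from Theorem 3 of \cite{DMS09}, then quasi-continuity by approximating $f^0$ with smooth data $f^{0,n}$, using the $(t,x)$-continuity of the corresponding $w^n$ (the paper cites \cite{Aronson3}), controlling $\mathrm{cap}\left(\{|w^{n+1}-w^n|>\epsilon_n\}\right)$ by the data, and extracting a quasi-uniformly convergent subsequence via Borel--Cantelli. The one step you leave as a black box is exactly where the paper's real work lies, and it is not simply ``extractable from \cite{Pierre,PIERRE}'': what Lemma 3.3 of \cite{PIERRE} gives is only that $\mathrm{cap}(\vartheta)\le\|v\|^2_{\mathcal K}$ whenever $v\in\mathcal P$ satisfies $v\ge1$ a.e.\ on the open set $\vartheta$, so the missing ingredient is a parabolic potential dominating $\frac{1}{\epsilon_n}|w^{n+1}-w^n|$ whose $\mathcal K$-norm is controlled by $\|f^{0,n+1}-f^{0,n}\|^*_{\#;T}$. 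The paper constructs it as the Mignot--Puel envelope $\kappa(w,0)=\mathrm{ess\,inf}\{u\in\mathcal P;\ u\ge w\ a.e.,\ u(0)\ge0\}$, obtained as the limit of the penalized equations $\partial_t v^n=Lv^n+n(v^n-w)^-$, and proves the energy estimate \eqref{estimw}, namely $E\sup_t\|\kappa_t-w_t\|^2+E\int_0^T\mathcal E(\kappa_t-w_t)dt\le CE\int_0^T(\|f^0_t\|^*_{\#})^2dt$, by the same duality/Sobolev computation used throughout; this is new content, not a quotation from \cite{PIERRE}. Your first suggested remedy --- dominating $|w^n-w^m|$ by the solution $z$ of the linear equation with source $|f^{0,n}-f^{0,m}|$, which lies in $\mathcal P$ because its source is nonnegative, dominates $\pm(w^n-w^m)$ by comparison, and obeys the same energy bound --- is a legitimate variant that would close the gap. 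Your vaguer alternative (``bounding $|w^n-w^m|$ in terms of $\sup_t\|\cdot\|_2$ and the energy'') does not work as stated: smallness of $w^n-w^m$ in $\mathcal K$ does not by itself bound the capacity of its superlevel sets, precisely because $w^n-w^m$ is not sign-definite and one must pass through a dominating element of $\mathcal P$.
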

\begin{proof}[Proof of Theorem \ref{2estimate}.]  We split the proof in 2 steps:

\vspace{0.2cm}

\textbf{Step 1.} We prove an existence and uniqueness result for the problem (\ref{SPDEO}) under the stronger conditions {\bf (H)}, {\bf(O)}, {\bf (HI2)} and
{\bf (HO\#)}. The idea of the proof is the same as the proof of Theorem 4 in  \cite{DMZ12}. \\
We begin with the linear case i.e. we assume that $f$, $g$ and $h$ do not depend on $(u,\nabla u)$, this implies that $f=f^0$, $g=g^0$ and $h=h^0$. We consider the
following penalized equation:
$$d(u_t^n-S'_t)=L(u_t^n-S'_t)dt+\bar{f}_tdt+\sum_{i=1}^d\partial_i\bar{g}^i_tdt+\sum_{j=1}^{+\infty}\bar{h}^j_tdB^j_t+n(u_t^n-S_t)^-dt$$
where $\bar{f}=f-f'$, $\bar{g}=g-g'$and $\bar{h}=h-h'$.
Applying It\^o's formula (\ref{It\^o'sspdeweaker}) to $(u^n-S')^2$, we
have almost surely for all $t\in[0,T]$:
\begin{eqnarray*}\parallel
u_t^n-S'_t\parallel^2+2\int_0^t\mathcal{E}(u_s^n-S'_s)ds&=&\parallel\xi-S'_0\parallel^2+2\int_0^t((u_s^n-S'_s),\bar{f}_s)ds\\&-&2\sum_{i=1}^d\int_0^t(\partial_i(u_s^n-S'_s),\bar{g}^i_s)ds+2\sum_{j=1}^{+\infty}\int_0^t((u_s^n-S'_s),\bar{h}^j_s)dB^j_s
\\&+&2\int_0^t\int_\mathcal{O}(u_s^n-S'_s)n(u_s^n-S_s)^-ds+\int_0^t\parallel|\bar{h}_s|\parallel^2ds
.\end{eqnarray*} We remark
first that:\begin{eqnarray*}&&\int_0^t\int_\mathcal{O}(u_s^n-S'_s)n(u_s^n-S_s)^-ds=\int_0^t\int_\mathcal{O}(u_s^n-S_s+S_s-S'_s)n(u_s^n-S_s)^-ds\\&&=
-\int_0^t\int_\mathcal{O}n((u_s^n-S_s)^-)^2ds+\int_0^t\int_\mathcal{O}(S_s-S'_s)n(u_s^n-S_s)^-dxds.\end{eqnarray*}
The last term in the right member is non-positive because  $S_t\leq
S'_t$, thus,\begin{eqnarray*}\parallel
u_t^n-S'_t\parallel^2&+&2\int_0^t\mathcal{E}(u_s^n-S'_s)ds+2\int_0^tn\parallel(u_s^n-S_s)^-\parallel^2ds\leq\parallel\xi-S'_0\parallel^2\\&+&
2\int_0^t(u_s^n-S'_s,\bar{f}_s)ds-2\sum_{i=1}^d\int_0^t(\partial_i(u_s^n-S'_s),\bar{g}^i_s)ds\\&+&
2\sum_{j=1}^{+\infty}\int_0^t(u_s^n-S'_s,\bar{h}^j_s)dB^j_s
+\int_0^t\parallel|\bar{h}_s|\parallel^2ds,\ \ \ a.s.\end{eqnarray*} Then,
 Hölder's duality inequality (\ref{dual2}) and the relation (\ref{sobolev})
lead to the following estimates, for all $t$ in $[0,T]$, for any $\delta$, $\epsilon>0$,
\begin{eqnarray*}2|\int_0^t(u_s^n-S'_s,\bar{f}_s)ds|&\leq&\delta\left\|u^n-S'\right\|^2_{\#;T}+C_\delta\left(\left\|\bar{f}\right\|_{\#;T}^*\right)^2\\&\leq&C\delta\left(\left\|u^n-S'\right\|^2_{2,\infty;T}+\left\|\nabla(u^n-S')\right\|^2_{2,2;T}\right)
+C_\delta\left(\left\|\bar{f}\right\|_{\#;T}^*\right)^2,\end{eqnarray*}
and\begin{eqnarray*}2|\sum_{i=1}^d\int_0^t(\partial_i(u_s^n-S'_s),\bar{g}^i_s)ds|\leq\epsilon\left\|\nabla(u^n-S')\right\|_{2,2;T}^2+C_{\epsilon}\left\||\bar{g}|\right\|_{2,2;T}^2.\end{eqnarray*}
Moreover, thanks to the Burkholder-Davies-Gundy inequality, we get
\begin{eqnarray*}E\sup_{t\in[0,T]}|\sum_{j=1}^{+\infty}\int_0^t(u_s^n-S'_s,\bar{h}^j_s)dB^j_s|&\leq&c_1E[\int_0^T\sum_{j=1}^{+\infty}(u_s^n-S'_s,\bar{h}^j_s)^2ds]^{1/2}\\&\leq&
c_1E[\int_0^T\sum_{j=1}^{+\infty}\sup_{s\in[0,T]}\parallel
u_s^n-S'_s\parallel^2\parallel\bar{h}^j_s\parallel^2ds]^{1/2}\\&\leq&c_1E[\sup_{s\in[0,T]}\parallel
u_s^n-S'_s\parallel(\int_0^T\parallel|\bar{h}_s|\parallel^2ds)^{1/2}]\\&\leq&\epsilon
E\sup_{s\in[0,T]}\parallel
u_s^n-S'_s\parallel^2+\frac{c_1}{4\epsilon}E\int_0^T\parallel|\bar{h}_s|\parallel^2ds.\end{eqnarray*}Then
using the strict ellipticity  assumption and the inequalities above,
we get
\begin{eqnarray*}&&(1-2\epsilon-C\delta)E\sup_{t\in[0,T]}\parallel
u_t^n-S'_t\parallel^2+(2\lambda-\epsilon-C\delta)E\int_0^T\parallel\nabla(u_s^n-S'_s)\parallel^2ds
\\&&\leq
C(E\parallel\xi-S'_0\parallel^2+E(\parallel
\bar{f}\parallel_{\#;T}^*)^2+E\parallel
|\bar{g}|\parallel_{2,2;T}^2+E\parallel
|\bar{h}|\parallel_{2,2;T}^2).\end{eqnarray*} We  take $\epsilon$
and $\delta$ small enough such that $(1-2\epsilon-C\delta)>0$ and
$(2\lambda-\epsilon-C\delta)>0$,
\begin{eqnarray*}E\sup_{t\in[0,T]}\parallel u_t^n-S'_t\parallel^2+E\int_0^T\mathcal{E}(u_t^n-S'_t)dt\leq C.\end{eqnarray*}
Then, to prove the existence and uniqueness in this case, we can
follow line by line the proof based on a weak convergence argument
given in \cite{DMZ12}, Theorem 4. The only difference is that now
the estimates depend on $\| \bar{f}^0\|_{\# ;t}$ instead of $\|
\bar{f}^0\|_{2,2 ;t}$.

\vspace{0.2cm}

\textbf{Step 2.} Now we turn to the general case, i.e. assume {\bf(H)}, {\bf(O)}, {\bf
(HI\#)} and {\bf (HO\#)}.

We consider the following SPDE:
\begin{equation}\label{equ:w}
dw_t+Aw_tdt=f_t^0dt
\end{equation}
Thus $u-w$ satisfies the following OSPDE:
\begin{eqnarray*}
d(u_t-w_t)+A(u_t-w_t)dt&=&F_t(u_t-w_t,\nabla(u_t-w_t))dt+div G_t(u_t-w_t,\nabla(u_t-w_t))dt\\&+&H_t(u_t-w_t,\nabla(u_t-w_t))dB_t+\nu(x,dt),
\end{eqnarray*}
where $$F_t(x,y,z)=f_t(x,y+w,z+\nabla w)-f_t^0(x)$$ $$G_t(x,y,z)=g_t(x,y+w,z+\nabla w)$$ $$H_t(x,y,z)=h_t(x,y+w,z+\nabla w).$$
We can easily check that $F$, $G$ and $H$ satisfy the same Lipschitz conditions as $f$, $g$ and $h$ and also $F^0\in L^2(\Omega\times[0,T]\times\cO;\R)$,  $G^0\in L^2(\Omega\times[0,T]\times\cO;\R^d)$ and $H^0\in L^2(\Omega\times[0,T]\times\cO;\R^{\bbN^*})$.
Moreover, $u-w\geq S-w$ and $S-w\leq S'-w$ where $S'-w$ satisfies the following SPDE:
\begin{equation*}
d(S'_t-w_t)+A(S'_t-w_t)dt=(f'_t-f_t^0)dt+div g'_tdt+h'_tdB_t.
\end{equation*}
It is easy to see that $f'-f$, $g'$ and $h'$ satisfy {\bf (HO\#)}.
Therefore, from Step 1, we know that $(u-w,\nu)$ uniquely exists.\\Combining with the existence and uniqueness of $w$, we deduce that the solution of the problem (\ref{SPDEO}) uniquely exists under the weaker assumptions  {\bf (HI\#)} and {\bf (HO\#)}.\\And the quasi-continuity of $u$ comes from the quasi-continuity of $w$ and $u-w$.
\end{proof}

\subsection{Estimates of the positive part of the solution} 
To get the estimate of the solution of \eqref{SPDEO}, firstly, 
we establish an It\^o formula for $(u,\nu)$.
\begin{theorem}\label{Itosweaker} Let $(u,\nu)$ be the solution of OSPDE \eqref{SPDEO} and $\varphi:\mathbb{R}\rightarrow\mathbb{R}$ be a function of class $C^2$ and assume that $\varphi''$ is bounded and $\varphi'(0)=0$.
Then the following relation holds a.s. for all $t\in [0,T]$:
\begin{eqnarray*}&&
\int_{{\cal O}}\varphi \left( u_t\left( x\right) \right)
dx+\int_0^t{\cal E} \left( \varphi ^{\prime }\left( u_s\right)
,u_s\right) ds=\int_{{\cal O} }\varphi \left( \xi \left( x\right)
\right) dx+\int_0^t\left( \varphi ^{\prime }\left( u_s\right) ,f_s
\left( u_s ,\nabla u_s \right)\right) ds
\\&&-\int_0^t\sum_{i=1}^d\left(
\partial _i\left( \varphi ^{\prime }\left( u_s\right) \right)
,g_{i,s}(u_s,\nabla u_s \right) ds+\frac 12\int_0^t\left( \varphi
^{\prime \prime }\left( u_s\right) ,\left| h_{s} (u_s ,\nabla u_s
)\right| ^2\right) ds
\\&&+\sum_{j=1}^{\infty}\int_0^t\left( \varphi ^{\prime }\left(
u_s\right) ,h_{j,s} (u_s ,\nabla u_s )\right)
dB_s^j+\int_0^t\int_\cO\varphi'(u_s)\nu(dxds).
\end{eqnarray*}
\end{theorem}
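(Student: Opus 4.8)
The plan is to reduce the statement to the already-established Itô formula for the SPDE without obstacle (Lemma \ref{Itospdeweaker}) by removing, one at a time, the two sources of trouble: the regular measure $\nu$ and the weak integrability of $f^0$. First I would use the decomposition $u = w + (u-w)$ set up in Step 2 of the proof of Theorem \ref{2estimate}, where $w$ solves the auxiliary linear PDE \eqref{eq:w} with the $L^{\#}$-integrable coefficient $f^0$, and $v := u-w$ solves an OSPDE with coefficients $F, G, H$ satisfying the Lipschitz assumption \textbf{(H)} and, crucially, the \emph{stronger} $L^2$-type integrability \textbf{(HI2)}. Thus it suffices to prove the Itô formula for $v$ (an OSPDE under the strong integrability condition, which is essentially the content of the Itô formula already obtained in \cite{DMZ12}) and then patch in $w$; but since $\varphi$ is only assumed $C^2$ with $\varphi''$ bounded and $\varphi'(0)=0$, the patching is the composition $\varphi(v+w)$, which is not itself of the product form, so I will instead argue directly on $u$, using $w$ only to justify that the potential term $\int_0^t\int_\cO \varphi'(u_s)\,d\nu$ makes sense.

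Concretely, the core step is the \textbf{penalization / smooth-measure approximation}. Recall from the construction in \cite{DMZ12} (used in Step 1 above) that $u$ is the limit of the penalized solutions $u^n$, which solve genuine SPDEs without obstacle with source term $f + n(u^n - S)^-$, and that the measures $\nu^n := n(u^n-S)^-\,dx\,dt$ converge to $\nu$ in the appropriate sense (weak convergence of the associated potentials, with uniform bounds in $\cK$). To each $u^n$ I apply Lemma \ref{Itospdeweaker}, which is legitimate precisely because $f^0$, though only in $L^{\#}$, is augmented by the extra term $n(u^n-S)^-$ which lies in $L^2$; hence the total source of the $n$-th penalized equation verifies \textbf{(HI\#)}, and Lemma \ref{Itospdeweaker} gives the Itô formula for $\varphi(u^n)$ with an extra term $\int_0^t(\varphi'(u^n_s), n(u^n_s-S_s)^-)\,ds = \int_0^t\int_\cO \varphi'(u^n_s)\,d\nu^n$. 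Then I pass to the limit $n\to\infty$ term by term: the deterministic integrals involving $f,g,h$ converge using the strong convergence of $u^n \to u$ in $\cH_T$ (established in the existence proof) together with the Lipschitz bounds and the boundedness of $\varphi''$ (so $\varphi'$ is Lipschitz and $\varphi'(u^n)\to\varphi'(u)$ in $L^2$, $\partial_i\varphi'(u^n)=\varphi''(u^n)\partial_i u^n \to \varphi''(u)\partial_i u$ weakly in $L^2$); the stochastic integral converges via the Burkholder--Davis--Gundy inequality; and the energy term $\int_0^t\cE(\varphi'(u^n_s),u^n_s)\,ds$ is handled by the same weak-convergence argument as in \cite{DMZ12}.

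The step I expect to be the main obstacle is the \textbf{convergence of the measure term} $\int_0^t\int_\cO \varphi'(u^n_s)\,d\nu^n \to \int_0^t\int_\cO \varphi'(u_s)\,d\nu$. The natural tool is Lemma \ref{convergemeas}, but that lemma requires the integrand to be a \emph{fixed} quasi-continuous function bounded by an element of $\cP$, whereas here the integrand $\varphi'(u^n_s)$ depends on $n$. I would handle this by splitting $\int \varphi'(u^n)\,d\nu^n - \int\varphi'(u)\,d\nu = \int(\varphi'(u^n)-\varphi'(u))\,d\nu^n + (\int \varphi'(u)\,d\nu^n - \int\varphi'(u)\,d\nu)$: the second piece is exactly Lemma \ref{convergemeas} applied to the quasi-continuous function $\varphi'(\tilde u)$ (quasi-continuous since $\varphi'$ is Lipschitz and $\tilde u$ is quasi-continuous, by Theorem \ref{2estimate}), bounded by a potential because $u$ is; for the first piece one uses that $\varphi'$ is Lipschitz, that $\tilde u^n \to \tilde u$ quasi-uniformly along a subsequence (a standard consequence of the $\cH_T$-convergence plus the capacity estimates from \cite{DMZ12}), and the uniform bound $\sup_n \nu^n([0,T]\times\cO) < \infty$ coming from the penalization estimates. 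A bit of care is also needed because $\varphi'(u^n)$ need not be nonnegative, so one works with $\varphi'(u^n) = \varphi'(u^n)^+ - \varphi'(u^n)^-$ (or, equivalently, notes that $|\varphi'(u^n)|$ is dominated by a multiple of $|u^n|$ plus a constant, hence by an element of $\cP$ up to additive constants, on the support of the compactly supported approximations). Once this measure term is under control, assembling all the limits gives the stated identity a.s. for all $t\in[0,T]$, the "for all $t$" being preserved because both sides are a.s. continuous in $t$ (the left side via the left-continuous version of $u\in\cP$-type arguments, the right side by construction).
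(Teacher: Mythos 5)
Your overall strategy (approximate by problems for which the It\^o formula is already available, then pass to the limit) is the same as the paper's, but the approximation scheme you choose is different, and the difference matters precisely at the step you yourself flag as the main obstacle. The paper does \emph{not} penalize: it freezes the nonlinearity at the true solution (setting $\breve g=g(\cdot,u,\nabla u)$, $\breve h=h(\cdot,u,\nabla u)$ and $f^n=f(\cdot,u,\nabla u)-f^0+f^0_n$ with $f^0_n$ bounded and $f^0_n\to f^0$ in $L^*_{\#;t}$), and solves for each $n$ a \emph{linear OSPDE with the same obstacle $S$}, whose solution $(u^n,\nu^n)$ exists by Theorem \ref{2estimate} and satisfies the It\^o formula of \cite{DMZ12} because $f^n$ verifies \textbf{(HI2)}. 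The payoff is that each pair $(u^n,\nu^n)$ satisfies the minimal Skorohod condition with obstacle $S$, so the measure term can be rewritten exactly as $\int_0^t\int_\cO\varphi'(u^n_s)\,\nu^n(dx\,ds)=\int_0^t\int_\cO\varphi'(S_s)\,\nu^n(dx\,ds)$, i.e.\ with an integrand \emph{independent of $n$}, and Lemma \ref{convergemeas} applies directly to give convergence to $\int_0^t\int_\cO\varphi'(S_s)\,\nu(dx\,ds)=\int_0^t\int_\cO\varphi'(u_s)\,\nu(dx\,ds)$. This is the key trick, and your scheme forfeits it: the penalized measures $\nu^n=n(u^n-S)^-\,dx\,dt$ are not supported on $\{u^n=S\}$, so no such substitution is possible.

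The substitute you propose for the measure term has a genuine gap. Your splitting requires the uniform total-mass bound $\sup_n\nu^n([0,T]\times\cO)<\infty$, and you attribute it to "the penalization estimates"; but the standard penalization estimate only yields $E\int_0^T n\|(u^n_s-S_s)^-\|^2\,ds\le C$, which controls $\int n\,((u^n-S)^-)^2$ and not the total variation $\int n\,(u^n-S)^-$ (a Cauchy--Schwarz attempt produces a factor $\sqrt{n}$). The convergence $\nu^n\to\nu$ available from \cite{DMZ12} is in the sense of the associated potentials being bounded in $\cK$ and weakly convergent, which is exactly the hypothesis of Lemma \ref{convergemeas} and is weaker than a total-variation bound; indeed the paper's own proof of Proposition \ref{Ito2positive} has to invoke Fatou's lemma just to see that $\int u^+\,d\nu$ is finite, which signals that mass-type bounds are not freely available here. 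Your first piece $\int(\varphi'(u^n)-\varphi'(u))\,d\nu^n$ also needs a quasi-uniform convergence $\tilde u^n\to\tilde u$ that would have to be proved, not merely cited. A secondary issue: under \textbf{(HI\#)} alone the penalization construction is not directly performed on $u$ (in the paper it is carried out in Step 1 for the linear problem under \textbf{(HI2)}, after subtracting $S'$, and the general case is reached through the decomposition $u=w+(u-w)$), so the identification of $u$ as the limit of a single penalized sequence with source $f+n(u^n-S)^-$ would itself require justification. If you replace the penalization by the paper's frozen-coefficient, mollified-source approximation, the Skorohod substitution closes the argument cleanly and the rest of your limit passages (for the $f$, $g$, $h$, energy and martingale terms) are essentially the ones the paper uses.
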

\begin{proof} The idea is that we begin with the stronger case, where {\bf(H)}, {\bf(O)},
$\textbf{(HI2)}$ and $\textbf{(HO\#)}$ hold. In this case we have the It\^o's formula, see step 1 of the proof of Theorem \ref{2estimate}.
Then using an approximation argument we can obtain the It\^o's formula in the general case. More precisely:
\\We take the function
$f_n(\omega,t,x):=f(\omega,t,x,u,\nabla u)-f^0+f_n^0$, where
$f_n^0,\ n\in\bbN^*$, is a sequence of bounded functions such that
$E\left(\left\|f^0-f_n^0\right\|^*_{\#;t}\right)^2\rightarrow0,\ as\
n\rightarrow+\infty.$ We consider the following equation
\begin{equation*}
du_t^n(x)+Au_t^n(x)dt=f_t^n(x)dt+div\breve{g}_t(x)dt+\breve{h}_t(x)dB_t+\nu^n(x,dt)
\end{equation*}
where $\breve{g}(\omega,t,x)=g(\omega,t,x,u,\nabla u)$ and $\breve{h}(\omega,t,x)=h(\omega,t,x,u,\nabla u)$.
This is a linear equation in $u^n$ so from Theorem \ref{2estimate}, we know that $(u^n,\nu^n)$ uniquely exists. \\
Applying It\^o's formula for the difference of two solutions to $(u^n-u^m)^2$ (see Theorem 6 in \cite{DMZ12}),
we have, almost surely, for all $t\in[0,T]$, 
\begin{eqnarray*}\left\|u^n_t-u^m_t\right\|^2+2\int_0^t\cE(u^n_s-u^m_s)ds&=&2\int_0^t(u^n_s-u^m_s, f^n_s-f^m_s)ds\\&+&2\int_0^t\int_\cO(u^n_s-u^m_s)(\nu^n-\nu^m)(dxds).
\end{eqnarray*}
Remarking that
$$\int\int(u_n-u_m)(\nu_n-\nu_m)(dxds)=\int\int(S-u_m)\nu_n(dxds)-\int\int(u_n-S)\nu_m(dxds)\leq 0$$
and for $\delta>0$, we have
\begin{eqnarray*}
2\left|\int_0^t(u^n_s-u^m_s,f^n_s-f^m_s)ds\right|&\leq&\delta\left\|u^n-u^m\right\|_{\#;t}^2+C_\delta\left(\left\|f^n-f^m\right\|_{\#;t}^*\right)^2.
\end{eqnarray*}
Since $\cE(u^n-u^m)\geq\lambda\left\|\nabla(u^n-u^m)\right\|^2_2$,
we deduce that, for all $t\in[0,T]$, almost surely,
\begin{eqnarray}\label{unminusum}
\left\|u_t^n-u_t^m\right\|^2+2\lambda\left\|\nabla(u^n-u^m)\right\|^2_{2,2;t}\leq\delta\left\|u^n-u^m\right\|_{\#;t}^2+C_\delta\left(\left\|f^n-f^m\right\|_{\#;t}^*\right)^2
\end{eqnarray}
Taking the supremum and the expectation, we get
\begin{equation*}
E\left(\left\|u^n-u^m\right\|^2_{2,\infty;t}+\left\| \nabla
(u^n-u^m)\right\| _{2,2;t}^2\right)\leq\delta E\left\|u^n-u^m\right\|^2_{\#;t}+C_\delta E\left(\left\|f^n-f^m\right\|_{\#;t}^*\right)^2 .
\end{equation*}
Dominating the term $E\left\|u^n-u^m\right\| _{\#;t}^2$ by
using the estimate (\ref{sobolev}) and taking $\delta$ small enough, we obtain the following estimate:
\begin{eqnarray*}E\left(\left\|u_n-u_m\right\|^2_{2,\infty;t}+\left\|\nabla
(u_n-u_m)\right\|^2_{2,2;t}\right) \leq 2C_\delta E\left(\left\|f^n-f^m\right\|_{\#;t}^*\right)^2\rightarrow0,\ when\ n,\,m\rightarrow\infty
\end{eqnarray*}
Therefore, $(u^n)$ has a limit $u$ in $\cH_T$.\\ Now we want to find the limit of $(\nu^n)$: we denote by $v^n$ the parabolic potential associated to $\nu^n$,  and $z^n=u^n-v^n$, so $z^n$ satisfies the following SPDE
\begin{equation*}
dz_t^n (x) +Az_t^n (x)dt =f^n_t (x)dt-\sum_{i=1}^d \partial_i \breve{g}^{i}_t(x)dt
+ \sum_{j=1}^{\infty} \breve{h}^{j}_t(x)\, dB_t^j .
\end{equation*}
Applying It\^o's formula to $(z^{n}-z^m)^2$, doing the same calculus as before, we obtain the following relation:
\begin{eqnarray*}E\left(\left\|z_n-z_m\right\|^2_{2,\infty;t}+\left\|\nabla
(z_n-z_m)\right\|^2_{2,2;t}\right) \leq 2C_\delta E\left(\left\|f^n-f^m\right\|^*_{\#;t}\right)^2 \longrightarrow 0,\quad \ as\ n,\, m\rightarrow \infty . \end{eqnarray*}
As a consequence: $$
E\left(\left\|v_n-v_m\right\|^2_{2,\infty;t}+\left\|\nabla
(v_n-v_m)\right\|^2_{2,2;t}\right) \longrightarrow 0,\quad \ as\ n,\, m\rightarrow \infty .
$$
Therefore, $(v^n)$ has a limit $v$ in $\cH_T$. So, by extracting a subsequence, we can assume that $(v^n)$ converges to $v$ in $\mathcal{K}$, $P-$almost-surely. Then, it's clear that $v\in\cP$, and we denote by $\nu$ the regular random measure associated to the potential $v$. Moreover, we have $P-a.s.$
$\forall\varphi\in\cW_t^+$,
\begin{eqnarray*}\int_0^t\int_\cO\varphi(x,s)\nu(dxds)&=&\lim_{n\rightarrow\infty}\int_0^t\int_\cO\varphi(x,s)\nu^n(dxds)\\&=&\lim_{n\rightarrow\infty}\int_0^t-(v^n_s,\frac{\partial\varphi_s}{\partial s})ds+\int_0^t\cE(v^n_s,\varphi_s)ds\\&=&\int_0^t-(v_s,\frac{\partial\varphi_s}{\partial s})ds+\int_0^t\cE(v_s,\varphi_s)ds.\end{eqnarray*}
Hence, $(u^n,\nu^n)$ converges to $(u,\nu)$. Moreover, by Theorem
\ref{2estimate}, we know that the solution of problem (\ref{SPDEO})
uniquely exists and we apply It\^o's formula for
$(u^n,\nu^n)\,:$ $\forall t\in[0,T]$, 
\begin{eqnarray}\label{It\^o'sapproxim}&&
\int_{{\cal O}}\varphi \left( u_t^n\left( x\right) \right)
dx+\int_0^t{\cal E} \left( \varphi ^{\prime }\left( u_s^n\right)
,u_s^n\right) ds=\int_{{\cal O} }\varphi \left( \xi \left( x\right)
\right) dx+\int_0^t\left( \varphi ^{\prime }\left( u_s^n\right)
,f_s^n\right) ds
\nonumber\\&&-\int_0^t\sum_{i=1}^d\left(
\partial _i\left(\varphi ^{\prime }\left( u_s^n\right)\right)
,\breve{g}^i_s\right) ds+\frac 12\int_0^t\left(
\varphi ^{\prime \prime }\left( u_s^n\right) ,\left| \breve{h}_{s}\right| ^2\right) ds
+\sum_{j=1}^{\infty}\int_0^t\left( \varphi ^{\prime }\left(
u_s^n\right) ,\breve{h}_s^j\right)
dB_s^j\nonumber\\&&+\int_0^t\int_\cO\varphi'(u_s^n)\nu^n(dxds)\quad\quad a.s.
\end{eqnarray}
Now, we pass to the limit as $n$ tends to $+\infty$. First, by using Lemma \ref{convergemeas} and Skohorod condition, we have 
\begin{eqnarray*}\int_0^t\int_\cO\varphi'(u^n_s)\nu^n(dxds)=\int_0^t\int_\cO\varphi'(S_s)\nu^n(dxds)\rightarrow\int_0^t\int_\cO\varphi'(S_s)\nu(dxds)=\int_0^t\int_\cO\varphi'(u_s)\nu(dxds).\end{eqnarray*}
Moreover,
\begin{eqnarray*}
&&\left|\int_0^t\left( \varphi ^{\prime }\left( u_s^n\right), f_s^n\right) ds-\int_0^t\left( \varphi ^{\prime }\left( u_s\right), f_s\right) ds\right|\\&\leq&
\left|\int_0^t\left( \varphi ^{\prime }\left( u_s^n\right)- \varphi ^{\prime }\left( u_s\right), f_s^n\right) ds\right|
+\left|\int_0^t\left(\varphi ^{\prime }\left( u_s\right), f_s^n-f_s\right) ds\right|\\&\leq&C\left\|u^n-u\right\|_{\#;t}\left\|f^n\right\|_{\#;t}^*+C\left\|u\right\|_{\#;t}\left\|f^n-f\right\|_{\#;t}^*.
\end{eqnarray*}
The relation (\ref{sobolev}) and the strong convergence of $(u^n)$ yield that $E\left\|u^n-u\right\|_{\#;t}\rightarrow0$, as $n\rightarrow\infty$. So, by extracting a subsequence, we can assume that the right member in the previous inequality tends to $0$ $P-$almost surely as $n$ tends to $+\infty$. 
So we have
$$\lim_{n\rightarrow +\infty}\int_0^t\left( \varphi ^{\prime }\left( u_s^n\right), f_s^n\right) ds=\int_0^t\left( \varphi ^{\prime }\left( u_s\right), f_s\right) ds.$$
The convergences of the other terms in (\ref{It\^o'sapproxim}) are easily deduced from the strong convergence of $(u^n)$ to $u$ in $\cH_T$ and then we deduce the desired formula.
\end{proof}
This yields  the  estimate of the $\cH_T$-norm of $u$ under {\bf (HI\#)}:
\begin{proposition}\label{estimation}
Under the same hypotheses and notations as in the previous theorem, we have:
\begin{eqnarray*}E\left(\left\|u\right\|^2_{2,\infty;t}+\left\|\nabla u\right\|^2_{2,2;t}\right)&\leq& k(t)E(\left\|\xi-S'_0\right\|^2_2+\left(\left\|\bar{f}^0\right\|^*_{\#;t}\right)^2+
\left\|\bar{g}^0\right\|^2_{2,2;t}+\left\|\bar{h}^0\right\|^2_{2,2;t}
\\&+&
\left\|S'_0\right\|^2_2+\left(\left\|f'\right\|^*_{\#;t}\right)^2+
\left\|g'\right\|^2_{2,2;t}+\left\|h'\right\|^2_{2,2;t})\end{eqnarray*}for
each $t\in[0,T]$, where $k(t)$ is a constant that only depends on
the structure constants and $t$.
\end{proposition}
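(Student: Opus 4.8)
The plan is to apply the It\^o formula of Theorem~\ref{Itosweaker} to the process $v:=u-S'$ with $\varphi(x)=x^2$, which is admissible since $\varphi''\equiv 2$ is bounded and $\varphi'(0)=0$. Recall that $(v,\nu)$ solves the obstacle problem \eqref{uminusS'}, whose coefficients $\bar f,\bar g,\bar h$ of \eqref{BAR} satisfy \textbf{(H)} with the same constants $C,\alpha,\beta$, whose free data satisfy the integrability \eqref{differenceweaker}, and whose obstacle $S-S'$ is non-positive; hence Theorem~\ref{Itosweaker} applies and gives, a.s.\ for all $t\in[0,T]$, the energy identity for $\|v_t\|_2^2+2\int_0^t\mathcal{E}(v_s)\,ds$. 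The key structural observation is that the measure term splits as $2\int_0^t\int_{\cO}(u_s-S'_s)\,\nu(dx\,ds)=2\int_0^t\int_{\cO}(u_s-S_s)\,\nu(dx\,ds)+2\int_0^t\int_{\cO}(S_s-S'_s)\,\nu(dx\,ds)$; the first term vanishes by the Skohorod condition (its integrand is non-negative and its total on $[0,T]$ is zero), and the second is non-positive because $S\leq S'$ and $\nu\geq 0$. Thus the measure term may simply be discarded in the upper bound, and from here the computation follows the pattern of Step~1 in the proof of Theorem~\ref{2estimate}.

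Next I would estimate the remaining terms. Writing $\bar f_s(v_s,\nabla v_s)=\bar f^0_s+(\bar f_s(v_s,\nabla v_s)-\bar f^0_s)$, the term $2\int_0^t(v_s,\bar f^0_s)\,ds$ is controlled, first by the duality \eqref{dual2} and then by the Sobolev-type inequality \eqref{sobolev}, by $C\delta\big(\|v\|_{2,\infty;t}^2+\|\nabla v\|_{2,2;t}^2\big)+C_\delta\big(\|\bar f^0\|_{\#;t}^*\big)^2$, while the Lipschitz remainder contributes $C_\epsilon\int_0^t\|v_s\|_2^2\,ds+\epsilon\|\nabla v\|_{2,2;t}^2$. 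Using $|\bar g_s(v_s,\nabla v_s)|\leq|\bar g^0_s|+C|v_s|+\alpha|\nabla v_s|$ together with Cauchy--Schwarz and Young's inequality \eqref{young}, the $\bar g$-term is bounded by $(2\alpha+\epsilon)\|\nabla v\|_{2,2;t}^2+C_\epsilon\int_0^t\|v_s\|_2^2\,ds+C_\epsilon\|\,|\bar g^0|\,\|_{2,2;t}^2$, and similarly $\int_0^t\|\,|\bar h_s(v_s,\nabla v_s)|\,\|_2^2\,ds\leq(\beta^2+\epsilon)\|\nabla v\|_{2,2;t}^2+C_\epsilon\int_0^t\|v_s\|_2^2\,ds+C_\epsilon\|\,|\bar h^0|\,\|_{2,2;t}^2$. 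The stochastic integral is treated by the Burkholder--Davis--Gundy inequality exactly as in Theorem~\ref{2estimate}, yielding a term $\epsilon\,E\|v\|_{2,\infty;t}^2+C_\epsilon\,E\|\,|\bar h^0|\,\|_{2,2;t}^2$ plus a further $E\int_0^t\|v_s\|_2^2\,ds$ contribution. Invoking ellipticity, $\mathcal{E}(v_s)\geq\lambda\|\nabla v_s\|_2^2$, and the contraction hypothesis $2\alpha+\beta^2<2\lambda$ of \textbf{(H)}, I would fix $\epsilon,\delta>0$ small enough that, after taking the supremum over $[0,t]$ and then expectations, the coefficients of $E\|v\|_{2,\infty;t}^2$ and of $E\|\nabla v\|_{2,2;t}^2$ on the left-hand side remain strictly positive.

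This leaves an inequality of the form $\phi(t)\leq A(t)+C\int_0^t\phi(s)\,ds$, where $\phi(t):=E\big(\|v\|_{2,\infty;t}^2+\|\nabla v\|_{2,2;t}^2\big)$, which is finite because $u,S'\in\cH_T$, and $A(t):=C\,E\big(\|\xi-S'_0\|_2^2+(\|\bar f^0\|_{\#;t}^*)^2+\|\bar g^0\|_{2,2;t}^2+\|\bar h^0\|_{2,2;t}^2\big)$; since $A$ is nondecreasing, Gronwall's lemma gives $\phi(t)\leq A(t)e^{Ct}$, and this exponential factor is precisely what forces $k(t)$ to depend on $t$. Finally, from $\|u\|_{2,\infty;t}^2\leq 2\|v\|_{2,\infty;t}^2+2\|S'\|_{2,\infty;t}^2$ and $\|\nabla u\|_{2,2;t}^2\leq 2\|\nabla v\|_{2,2;t}^2+2\|\nabla S'\|_{2,2;t}^2$, combined with the a priori estimate \eqref{estimobstacle2} for $S'$ under \textbf{(HO\#)} (used on $[0,t]$), which bounds $E\big(\|S'\|_{2,\infty;t}^2+\|\nabla S'\|_{2,2;t}^2\big)$ by $C\,E\big(\|S'_0\|_2^2+(\|f'\|_{\#;t}^*)^2+\|g'\|_{2,2;t}^2+\|h'\|_{2,2;t}^2\big)$, the asserted inequality follows after collecting the constants into a single $k(t)$.

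The step I expect to be the main obstacle is the bookkeeping of constants in the second paragraph: one must check that the total coefficient of $\|\nabla v\|_{2,2;t}^2$ generated by the $\bar f^0$-, $\bar g$-, $\bar h$- and Lipschitz-$\bar f$-contributions stays strictly below $2\lambda$ --- which is exactly the role of the strict inequality $2\alpha+\beta^2<2\lambda$, leaving a little slack for the parameters $\epsilon,\delta$ --- and that the $L^\infty$-in-time norm of $v$ is genuinely controlled, which is why the supremum over $[0,t]$ has to be taken \emph{before}, not after, applying Burkholder--Davis--Gundy.
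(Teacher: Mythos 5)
Your overall strategy coincides with the paper's: apply the It\^o formula of Theorem \ref{Itosweaker} to $(u-S')^2$, discard the measure term using $S\le S'$ together with the Skorohod condition, estimate the $\bar f$, $\bar g$, $\bar h$ terms by duality, the Lipschitz hypothesis and Young's inequality, exploit $2\alpha+\beta^2<2\lambda$, and conclude by adding the a priori estimate \eqref{estimobstacle2} for $S'$. There is, however, a genuine gap in your treatment of the martingale term. Since $\bar h$ depends on $(u-S',\nabla(u-S'))$, the bracket estimate reads $\langle M\rangle_t^{1/2}\le\|u-S'\|_{2,\infty;t}\,\|\,|\bar h(u-S',\nabla(u-S'))|\,\|_{2,2;t}\le\eta\|u-S'\|_{2,\infty;t}^2+c_\eta\bigl(\|u-S'\|_{2,2;t}^2+\|\nabla(u-S')\|_{2,2;t}^2+\|\,|\bar h^0|\,\|_{2,2;t}^2\bigr)$, and $c_\eta$ is necessarily \emph{large}, because $\eta$ must be taken small to absorb $\eta\,E\|u-S'\|_{2,\infty;t}^2$ into the left-hand side. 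Your Burkholder--Davis--Gundy step records only the $\epsilon\,E\|v\|_{2,\infty;t}^2$, the $\bar h^0$ and the $E\int_0^t\|v_s\|_2^2\,ds$ contributions and silently drops the $c_\eta\,E\|\nabla(u-S')\|_{2,2;t}^2$ term. That term can be absorbed neither by the ellipticity slack $2\lambda-2\alpha-\beta^2$ (a fixed, possibly small number) nor by Gronwall: it is bounded by $c_\eta\phi(t)$ with $c_\eta$ large, not by $C\int_0^t\phi(s)\,ds$. Consequently the inequality $\phi(t)\le A(t)+C\int_0^t\phi(s)\,ds$ that you invoke for $\phi(t)=E\bigl(\|u-S'\|_{2,\infty;t}^2+\|\nabla(u-S')\|_{2,2;t}^2\bigr)$ does not follow from the estimates you list.

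The paper avoids this by a two-stage argument. First it takes expectations \emph{without} the supremum (a stopping argument makes the martingale vanish), applies Gronwall to $E\|u_t-S'_t\|_2^2$ and obtains \eqref{star} and \eqref{twostar}, i.e.\ bounds on $E\|\nabla(u-S')\|_{2,2;t}^2$ and $E\|u-S'\|_{2,2;t}^2$ in terms of the data plus $\delta E\|u-S'\|_{\#;t}^2$. Only then does it take the supremum in \eqref{8}, apply Burkholder--Davis--Gundy with $\eta=1/(4C_{BDG})$, and use the stage-one bounds to control the large-constant gradient term coming from the bracket; the residual $\delta c_2(t)E\|u-S'\|_{\#;t}^2$ is finally absorbed via \eqref{sobolev} by choosing $\delta=1/(2c_1^2c_2(t))$. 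You should restructure your second and third paragraphs accordingly; the rest of your proposal (the sign analysis of the measure term, the individual coefficient estimates, and the final comparison with $S'$) is correct and matches the paper.
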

\begin{proof}
Since $(u-S',\nu)=\cR(\xi-S'_0,\bar{f},\bar{g},\bar{h},S-S')$, applying the above It\^o formula to $(u-S')^2$,we have, almost surely, for all $t\in [0,T]$:
\begin{eqnarray}\label{7}&&\left\|u_t-S'_t\right\|^2+2\int_0^t\mathcal{E}(u_s-S'_s)ds=\left\|\xi-S'_0\right\|^2+2\int_0^t(u_s-S'_s,\bar{f}_s(u_s-S'_s,\nabla (u_s-S'_s)))ds\nonumber\\&&-2\int_0^t(\nabla(u_s-S'_s),\bar{g}_s(u_s-S'_s,\nabla(u_s-S'_s))ds+2\int_0^t(u_s-S'_s,\bar{h}_s(u_s-S'_s,\nabla(u_s-S'_s))dB_s\nonumber\\&&+\int_0^t\left\|\bar{h}_s(u_s-S'_s,\nabla(u_s-S'_s))\right\|^2ds+2\int_0^t\int_\mathcal{O}(u_s-S'_s)(x)\nu(dxds).\end{eqnarray}
Remarking the following relation$$\int_0^t\int_\mathcal{O}(u_s-S'_s)\nu(dxds)\leq\int_0^t\int_\mathcal{O}(u_s-S_s)\nu(dxds)=0.$$
The Lipschitz conditions in $\bar{g}$ and $\bar{h}$ and Cauchy-Schwarz's inequality lead the following relations: for $\delta$, $\epsilon>0$, we have
\begin{eqnarray*}
\int_0^t(\nabla
(u_s-S'_s),\bar{g}_s(u_s-S'_s,\nabla(u_s-S'_s))ds&\leq&(\alpha+\epsilon)\left\|\nabla
(u-S')\right\|^2_{2,2;t}\\&&+c_\epsilon\left\|u-S'\right\|^2_{2,2;t}+c_\epsilon\left\|\bar{g}^0\right\|
_{2,2;t}^2,\end{eqnarray*} and
\begin{eqnarray*}
\int_0^t\left\|\bar{h}_s(u_s-S'_s,\nabla(u_s-S'_s))\right\|^2ds\leq(\beta^2+\epsilon)\left\|\nabla(u-S')\right\|^2_{2,2;t}+c_\epsilon\left\|u-S'\right\|^2_{2,2;t}+c_\epsilon\left\|\bar{h}^0\right\|^2_{2,2;t} .
\end{eqnarray*}
Moreover, the Lipschitz condition in $\bar{f}$, the duality relation between elements in  $L_{\#;t}$ and $L_{\#;t}^*$ \eqref{dual2} 
and Young's inequality \eqref{young} yield the following relation: 
\begin{eqnarray*}\int_0^t\left(u_s-S'_s,\bar{f}_s\left( u_s-S'_s,\nabla (u_s-S'_s)\right) \right)
ds&\leq&\epsilon \left\| \nabla (u-S')\right\| _{2,2;t}^2+c_\epsilon
\left\|u-S'\right\| _{2,2;t}^2\\&&+\delta \left\|u-S'\right\|
_{\#;t}^2+c_\delta \left( \left\| \bar{f}^0\right\|
^*_{\#;t}\right) ^2 , \end{eqnarray*}
Since $\cE(u-S')\geq\lambda\left\|\nabla(u-S')\right\|^2_2$, we deduce from \eqref{7} that for all $t\in[0,T]$, almost surely,
\begin{eqnarray}\label{8}
&&\left\|u_t-S'_t\right\|_2^2+2\left( \lambda -\alpha -\frac{\beta
^2}2-\frac 52\epsilon \right) \left\| \nabla (u-S')\right\|
_{2,2;t}^2\le \left\|\xi-S'_0\right\|_2^2+\delta \left\|u-S'\right\| _{\#;t}^2\nonumber\\&&+2c_\delta \left( \left\|\bar{f}^0\right\| ^*_{\#;t}\right)
^2+2c_\epsilon \left\|\bar{g}^0\right\|
_{2,2;t}^2+c_\epsilon \left\|\bar{h}^0\right\|
_{2,2;t}^2+5c_\epsilon \left\|u-S'\right\| _{2,2;t}^2+2M_t,
\end{eqnarray}
where $M_t:=\sum_{j=1}^{\infty}\int_0^t\left(u_s-S'_s,\bar{h}_s^j\left(
u_s-S'_s,\nabla (u_s-S'_s)\right) \right) dB_s^j$ represents the martingale part.
Further, using a stopping procedure while taking the expectation, the martingale part vanishes, so that we get
\begin{eqnarray*}
&&E\left\|u_t-S'_t\right\|_2^2+2\left( \lambda -\alpha -\frac{\beta
^2}2-\frac 52\epsilon \right) E\left\| \nabla (u-S')\right\|
_{2,2;t}^2\le E\left\|\xi-S'_0\right\|_2^2+\delta E\left\|u-S'\right\| _{\#;t}^2\\&&+2c_\delta E\left( \left\|\bar{f}^0\right\| ^*_{\#;t}\right)
^2+2c_\epsilon E\left\|\bar{g}^0\right\|
_{2,2;t}^2+c_\epsilon E\left\|\bar{h}^0\right\|
_{2,2;t}^2+5c_\epsilon \int_0^tE\left\|u_s-S'_s\right\|^2_2ds .
\end{eqnarray*}
Then we choose $\epsilon=\frac{1}{5}\left(\lambda-\alpha-\frac{\beta^2}{2}\right)$, set $\gamma=\lambda-\alpha-\frac{\beta^2}{2}$ and apply Gronwall's lemma obtaining
\begin{equation}\label{star}
E\left\|u_t-S'_t\right\|^2_2+\gamma E\left\|\nabla(u-S')\right\|^2_{2,2;t}\leq\left(\delta E\left\|u-S'\right\|^2_{\#;t}+ E\left[F\left(\delta, \xi-S'_0,\bar{f}^0,\bar{g}^0,\bar{h}^0,t\right)\right]\right)e^{5c_\epsilon t}
\end{equation}
where $F\left(\delta, \xi-S'_0,\bar{f}^0,\bar{g}^0,\bar{h}^0,t\right)=\left(\left\|\xi-S'_0\right\|^2+2c_\delta\left( \left\|\bar{f}^0\right\| ^*_{\#;t}\right)
^2+2c_\epsilon\left\|\bar{g}^0\right\|_{2,2;t}^2+c_\epsilon\left\|\bar{h}^0\right\|_{2,2;t}^2\right)$.
As a consequence one gets
\begin{equation}\label{twostar}
E\left\|u-S'\right\|^2_{2,2;t}\leq\frac{1}{5c_\epsilon}\left(\delta E\left\|u-S'\right\|^2_{\#;t}+E\left[F\left(\delta, \xi-S'_0,\bar{f}^0,\bar{g}^0,\bar{h}^0,t\right)\right]\right)\left(e^{5c_\epsilon t}-1\right) .
\end{equation}
Now we return to the inequality (\ref{8}) and take the supremum, getting
\begin{equation}\label{supremum}
\left\|u-S'\right\|^2_{2,\infty;t}\leq\delta\left\|u-S'\right\|^2_{\#;t}+F\left(\delta, \xi-S'_0,\bar{f}^0,\bar{g}^0,\bar{h}^0,t\right)+5c_\epsilon\left\|u-S'\right\|^2_{2,2;t}+2\sup_{s\leq t}M_s
\end{equation}
We would like to take the expectation in this relation and for that reason we need to estimate the bracket of the martingale part,
\begin{eqnarray*} \left\langle M\right\rangle _t^{\frac 12}&\le &\left\| u-S'\right\|
_{2,\infty ;t}\left\|\bar{h}(u-S',\nabla(u-S'))\right\| _{2,2;t}\\&\le &
\eta \left\| u-S'\right\| _{2,\infty ;t}^2+c_\eta \left( \left\|
u-S'\right\| _{2,2;t}^2+\left\| \nabla (u-S')\right\| _{2,2;t}^2+\left\|\bar{h}^0\right\|^2_{2,2;t}\right) \end{eqnarray*} with $\eta $ another small
parameter to be properly chosen. Using this estimate and the
inequality of Burkholder-Davis-Gundy we deduce from the inequality (\ref{supremum}):
$$ \left( 1-2C_{BDG}\eta \right) E\left\|u-S'\right\| _{2,\infty
;t}^2\le \delta E\left\|u-S'\right\| _{\#;t}^2+E\left[F\left(\delta, \xi-S'_0,\bar{f}^0,\bar{g}^0,\bar{h}^0,t\right)\right]$$
$$ +\left( 5c_\varepsilon +2C_{BDG}c_\eta
\right) E\left\|u-S'\right\| _{2,2;t}^2+2C_{BDG}c_\eta E\left\| \nabla
(u-S')\right\| _{2,2;t}^2+2C_{BDG}c_\eta E\left\|\bar{h}^0\right\|^2_{2,2;t}$$ where $C_{BDG}$ is the constant corresponding to the Burkholder-Davis-Gundy inequality.
Further we choose the parameter $\eta =\frac 1{4C_{BDG}}$
and combine this estimate with \eqref{star} and \eqref{twostar} to deduce an estimate of the form:%
\begin{eqnarray*}
E\left( \left\| u-S'\right\| _{2,\infty ;t}^2+\left\| \nabla
(u-S')\right\| _{2,2;t}^2\right) &\le& \delta c_2\left( t\right) E\left\|
u-S'\right\| _{\#;t}^2\\&+&c_3(\delta,t)E\left[R\left(\delta, \xi-S'_0,\bar{f}^0,\bar{g}^0,\bar{h}^0,t\right)\right]
\end{eqnarray*}
where $R\left(\delta, \xi-S'_0,\bar{f}^0,\bar{g}^0,\bar{h}^0,t\right)=\left(\left\|\xi-S'_0\right\|^2+\left( \left\|\bar{f}^0\right\| ^*_{\#;t}\right)
^2+\left\|\bar{g}^0\right\|_{2,2;t}^2+\left\|\bar{h}^0\right\|_{2,2;t}^2\right)$ and $c_3(\delta,t)$ is a
constant that depends on $\delta $ and $t,$ while $c_2\left( t\right) $ is independent of $
\delta .$ Dominating the term $E\left\|u-S'\right\| _{\#;t}^2$ by
using the estimate (\ref{sobolev}) and then choosing $\delta =\frac
1{2c_1^2c_2\left( t\right) }$,
we get the following estimate:
%For the martingale part, we use the Burkholder-Davies-Gundy's inequality and get:
%\begin{eqnarray*}
%&&E\sup_{s\leq t}\left|\sum_{j=1}^{+\infty}\int_0^s((u_r-S'_r)^+,\bar{h}^j_r (u_r-S'_r ,\nabla (u_r-S'_r) ))dB^j_r\right|\\&&\leq c_\eta E[\int_0^t
%\sum_{j=1}%^{+\infty}((u_r-S'_r)^+,\bar{h}^j_r((u_r-S'_r)^+,\nabla(u_r-S'_r)^+)^2dr]^{1/2}\\&&
%\leq c_\eta E[\int_0^t\sum_{j=1}^{+\infty}\sup_{r\leq t}\left\|(u_r-S'_r)^+\right\|^2\left\|\bar{h}^j_r ((u_r-S'_r)^+ ,\nabla(u_r-S'_r)^+)\right\|^2dr]^{1/2}
%\\&&\leq c_\eta E[\sup_{s\leq t}\left\|(u_s-S'_s)^+\right\|^2(\int_0^t\left\|\bar{h}%_r ((u_r-S'_r)^+ ,\nabla (u_r-S'_r)^+)\right\|^2dt)^{1/2}]
%\\&&\leq\epsilon E\sup_{s\leq t}\left\|(u_s-S'_s)^+\right\|^2+c_\epsilon E\int_0^t\left\|\bar{h}_r ((u_r-S'_r)^+,\nabla(u_r-S'_r)^+)\right\|^2ds.
%\end{eqnarray*}
\begin{eqnarray*}
E\left(\left\|u-S'\right\|^2_{2,\infty;t}+\left\|\nabla(u-S')\right\|^2_{2,2;t}\right)\leq k(t)E\left(\left\|\xi-S'_0\right\|^2_2+\left(\left\|\bar{f}^0\right\|^*_{\#;t}\right)^2+
\left\|\bar{g}^0\right\|^2_{2,2;t}+\left\|\bar{h}^0\right\|^2_{2,2;t}\right).
\end{eqnarray*}
Combining with the estimate for $S'$ (see Remark \ref{remark3}), we obtain the estimate asserted by our proposition.
\end{proof}
In the following Proposition, we establish a crucial relation for the positive part of $u$:
\begin{proposition}\label{Ito2positive}
%Let $\varphi:\mathbb{R}\rightarrow\mathbb{R}$ be a function of class $C^2$ and assume that $\varphi"$ is bounded and $\varphi'(0)=0$.
Under the hypotheses of Theorem \ref{Itosweaker} with same
notations, the following relation holds a.s. for all $t\in [0,T]$:
\begin{eqnarray*}\int_\mathcal{O}(u^+_t(x))^2dx+2\int_0^t\mathcal{E}(u^+_s)ds&=&\int_\mathcal{O}(\xi^+(x))^2dx+2\int_0^t(u^+_s,f_s(u_s,\nabla u_s))ds
\\&-&2\int_0^t(\nabla
u^+_s,g_s(u_s,\nabla u_s))ds+2\int_0^t(u^+_s,h_s(u_s,\nabla
u_s))dB_s\\&+&\int_0^t\left\| \1_{\{u_s>0\}}h_s(u_s,\nabla
u_s)\right\|^2ds+2\int_0^t\int_\mathcal{O}u^+_s(x)\nu(dxds).\end{eqnarray*}
\end{proposition}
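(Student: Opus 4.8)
The plan is to derive the formula from the It\^o formula of Theorem~\ref{Itosweaker} by approximating the non-$C^2$ function $y\mapsto (y^+)^2$ by smooth functions and passing to the limit. Fix a convex $\psi\in C^1(\R)$ with $\psi\equiv 0$ on $(-\infty,0]$, $0\le\psi'\le 1$ and $\psi'(s)=1$ for $s\ge 1$, and for $\eps>0$ set $\psi_\eps(y):=\eps\,\psi(y/\eps)$ and $\varphi_\eps(y):=2\int_0^y\psi_\eps(s)\,ds$. Then $\varphi_\eps\in C^2$, $\varphi_\eps'(0)=0$ and $\varphi_\eps''=2\psi'(\cdot/\eps)$ is bounded by $2$, so each $\varphi_\eps$ satisfies the hypotheses of Theorem~\ref{Itosweaker}. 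By convexity of $\psi$ one has $0\le\psi_\eps(y)\le y^+$, the family $\psi_\eps(y)$ increases to $y^+$ as $\eps\downarrow 0$, hence $0\le\varphi_\eps(y)\uparrow (y^+)^2$, $\varphi_\eps'=2\psi_\eps\uparrow 2y^+$, and $\varphi_\eps''(y)\to 2\,\1_{\{y>0\}}$ pointwise (note $\varphi_\eps''(0)=2\psi'(0)=0$).

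Applying Theorem~\ref{Itosweaker} to $\varphi_\eps$, and using $\nabla(\varphi_\eps'(u_s))=\varphi_\eps''(u_s)\nabla u_s$ together with $\1_{\{u_s>0\}}\nabla u_s=\nabla u_s^+$ (valid a.e.\ for $H_0^1$ functions), I would pass to the limit $\eps\to 0$ term by term. The endpoint terms $\int_\cO\varphi_\eps(u_t)\,dx$ and $\int_\cO\varphi_\eps(\xi)\,dx$ converge to $\int_\cO(u_t^+)^2dx$ and $\int_\cO(\xi^+)^2dx$ by dominated convergence, since $0\le\varphi_\eps(y)\le y^2$ and $u_t,\xi\in L^2(\cO)$. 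Since $0\le\varphi_\eps''(u_s)\sum_{i,j}a^{ij}\partial_i u_s\partial_j u_s\le 2\Lambda|\nabla u_s|^2\in L^1([0,t]\times\cO)$ (as $u\in\cH_T$), dominated convergence gives $\int_0^t\cE(\varphi_\eps'(u_s),u_s)\,ds\to 2\int_0^t\cE(u_s^+)\,ds$. For the drift terms one uses the linear growth in {\bf(H)}: $|\varphi_\eps''(u_s)\,\partial_i u_s\,g_{i,s}(u_s,\nabla u_s)|\le 2|\nabla u_s|\,|g_s(u_s,\nabla u_s)|$ and $|\varphi_\eps''(u_s)|\,|h_s(u_s,\nabla u_s)|^2\le 2|h_s(u_s,\nabla u_s)|^2$ are in $L^1([0,t]\times\cO)$ a.s., and, splitting $f_s(u_s,\nabla u_s)=f^0_s+(f_s(u_s,\nabla u_s)-f^0_s)$, the first part is handled through the duality inequality \eqref{dual2} (decomposing $f^0\in L^*_{\#;T}$ as a finite sum of $L^{p',q'}$ functions and bounding $|\varphi_\eps'(u_s)|\le 2|u_s|$ with $|u|\in L_{\#;t}$ by \eqref{sobolev}) and the second by linear growth; dominated convergence then yields the limits $2\int_0^t(u_s^+,f_s(u_s,\nabla u_s))\,ds$, $-2\int_0^t(\nabla u_s^+,g_s(u_s,\nabla u_s))\,ds$ and $\int_0^t\|\1_{\{u_s>0\}}h_s(u_s,\nabla u_s)\|^2ds$.

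For the martingale term it suffices to prove convergence of the stochastic integrals in probability, which follows if $\int_0^t\sum_{j}\big(\varphi_\eps'(u_s)-2u_s^+,\,h_{j,s}(u_s,\nabla u_s)\big)^2\,ds\to 0$ in probability. By Cauchy--Schwarz in $x$ and the $\ell^2$ structure of $h$, the integrand is bounded by $\|\varphi_\eps'(u_s)-2u_s^+\|_2^2\,\||h_s(u_s,\nabla u_s)|\|_2^2$, which tends to $0$ for a.e.\ $s$ and is dominated by $C\big(\sup_{s\le t}\|u_s\|_2^2\big)\||h_s(u_s,\nabla u_s)|\|_2^2$, integrable on $[0,t]$ for $P$-a.e.\ $\omega$; pathwise dominated convergence gives the claim, so $\sum_j\int_0^t(\varphi_\eps'(u_s),h_{j,s}(u_s,\nabla u_s))\,dB^j_s\to 2\sum_j\int_0^t(u_s^+,h_{j,s}(u_s,\nabla u_s))\,dB^j_s$ in probability.

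The delicate point is the measure term $\int_0^t\int_\cO\varphi_\eps'(\tilde u_s)\,\nu(dx\,ds)$, and this is where I expect the main obstacle. Here I would use the Skohorod condition: since $\tilde u\ge S$ quasi-everywhere and $\int_0^T\int_\cO(\tilde u-S)\,d\nu=0$, one has $\tilde u_s=S_s$ $\nu$-a.e., so $\int_0^t\int_\cO\varphi_\eps'(\tilde u_s)\,\nu(dx\,ds)=\int_0^t\int_\cO 2\psi_\eps(\tilde u_s)\,\nu(dx\,ds)$, a sequence that is nondecreasing in $1/\eps$ with $2\psi_\eps(\tilde u_s)\uparrow 2\tilde u_s^+$ $\nu$-a.e.; monotone convergence (applied $\omega$ by $\omega$ to $\nu$) identifies its limit as $2\int_0^t\int_\cO\tilde u_s^+\,\nu(dx\,ds)$, and this limit is automatically finite because every other term in the $\varphi_\eps$-It\^o formula has just been shown to converge to a finite limit — so no a priori integrability of $S^+$ (or $\tilde u^+$) against $\nu$ need be assumed. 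Collecting all the limits yields the asserted identity for each fixed $t$ a.s.; since both sides admit versions that are regular in $t$ (the stochastic term being a continuous martingale, the Lebesgue and $\nu$ integrals being monotone/continuous in $t$), the formula holds a.s.\ for all $t\in[0,T]$ simultaneously.
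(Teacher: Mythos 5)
Your proof is correct and follows essentially the same route as the paper's: both apply the It\^o formula of Theorem \ref{Itosweaker} to a $C^2$ approximation of $y\mapsto(y^+)^2$ and pass to the limit term by term, with the measure term controlled by the finiteness of all the remaining terms (you via monotone convergence of $2\psi_\eps(\tilde u)\uparrow 2\tilde u^+$ against $\nu$, the paper via Fatou plus dominated convergence for its family $\psi_n(y)=y^2\varphi(ny)$). The only differences are the choice of mollifying family and the amount of detail supplied for the individual limits, which does not change the substance of the argument.
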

\begin{proof} We approximate $\psi(y)=(y^+)^2$ by a sequence of regular
functions: Let $\varphi $ be an increasing ${\cal C}^\infty $
function such that $\varphi \left( y\right) =0$ for any $y\in
]-\infty ,1]$ and $\varphi \left( y\right) =1$ for any $y\in
[2,\infty [.$ We set $\psi _n\left( y\right) =y^2\varphi \left(
ny\right) ,$ for each $y\in \mathbb{{R}}$ and all $n\in
\mathbb{{N}}^{*}.$ It is easy to verify that $\left( \psi _n\right)
_{n\in \mathbb{{ N}}^{*}}$ converges
uniformly to the function $\psi $ and that%
$$ \lim _{n\rightarrow \infty }\psi _n^{\prime }\left( y\right)
=2y^{+},\;\lim _{n\rightarrow \infty }\psi _n^{\prime \prime }\left(
y\right) =2\cdot \1_{\left\{ y>0\right\} }, $$
for any $y\in \mathbb{{R}}.$ Moreover we have the estimates%
\begin{equation}\label{varphiprime} 0\le \psi _n\left( y\right) \le \psi \left( y\right) ,\;0\le
\psi _n^{\prime }\left( y\right) \le Cy,\;\left| \psi _n^{\prime
\prime }\left( y\right) \right| \le C, \end{equation}
for any $y\ge 0$ and all
$n\in \mathbb{{N}}^{*},$ where $C$ is a constant. We have for all $n\in \mathbb{{ N}}^{*}$ and each $t\in[0,T],$ a.s.,%
\begin{equation}
\label{psi-n}
\begin{split}
& \int_{{\cal O}}\psi _n\left( u_t\left( x\right) \right) dx+\int_0^t{\cal E}%
\left( \psi _n{}^{\prime }\left( u_s\right) ,u_s\right) ds=\int_{{\cal O}%
}\psi _n\left( \xi \left( x\right) \right) dx+\int_0^t\left( \psi
_n{}^{\prime }\left( u_s\right) ,f_s\left( u_s,\nabla u_s\right)
\right) ds \\
&  -\int_0^t\sum_{i=1}^d\left( \psi _n{}^{\prime \prime }\left(
u_s\right)
\partial _iu_s,g_{i,s}\left( u_s,\nabla u_s\right) \right) ds+\frac
12\int_0^t\left( \psi _n{}^{\prime \prime }\left( u_s\right) ,\left|
h_s\left( u_s,\nabla u_s\right) \right| ^2\right) ds \\
& \quad \quad  +\sum_{j=1}^{\infty}\int_0^t\left( \psi _n{}^{\prime
}\left( u_s\right) ,h_{j,s}\left( u_s,\nabla u_s\right) \right)
dB_s^j+\int_0^t\int_\cO\psi _n{}^{\prime }\left(
u_s\right)\nu(dxds).
\end{split}
\end{equation}
Taking the limit, thanks to the dominated convergence theorem, we
know that all the terms except $\int_0^t\int_\cO\psi _n{}^{\prime
}\left( u_s\right)\nu(dxds)$ converge. From (\ref{varphiprime}) and (\ref{psi-n}), it is easy to verify $$\sup_n\int_0^t\int_\cO\psi'_n(u_s)\nu(dxds)\leq C.$$
Then, by Fatou's lemma, we have$$\int_0^t\int_\cO u^+_s(x)\nu(dxds)=\liminf_{n\rightarrow\infty}\int_0^t\int_\cO\psi'_n(u_s)\nu(dxds) <+\infty,\qquad a.s.$$
%$$\int_0^t\int_\cO u_s(x)\nu(dxds)=\int_0^t\int_\cO u^+_s(x)\nu(dxds)-\int_0^t\int_\cO u^-_s(x)\nu(dxds)<+\infty,\qquad a.s.$$
%$\Rightarrow$ $$\int_0^t\int_\cO u^-_s(x)\nu(dxds)<+\infty,\qquad a.s.$$
%$\Rightarrow$ $$\int_0^t\int_\cO \left|u_s(x)\right|\nu(dxds)=\int_0^t\int_\cO u^+_s(x)\nu(dxds)+\int_0^t\int_\cO u^-_s(x)\nu(dxds)<+\infty,\qquad a.s.$$
%Moreover, $$0\leq\int_0^t\int_\cO\psi _n{}^{\prime }\left(u_s\right)\nu(dxds)\leq C\int_0^t\int_\cOu^+_s(x)\nu(dxds)<+\infty,$$
Hence, the convergence of the last term
comes from the dominated convergence theorem.
%Hence, we have the following It\^o's formula for the positive
%part of the solution with null Dirichlet condition:
%\begin{eqnarray*}\int_\mathcal{O}(u^+_t(x))^2dx+2\int_0^t\mathcal{E}(u^+_s)ds&=&\int_\mathcal{O}(\xi^+(x))^2dx+2\int_0^t(u^+_s,f_s(u_s,\nabla %u_s))ds\\&-&2\int_0^t(\nabla u^+_s,g_s(u_s,\nabla u_s))ds+2\int_0^t(u^+_s,h_s(u_s,\nabla u_s))dB_s\\&+&\int_0^t\left\| I_{\{u_s>0\}}h_s(u_s,\nabla
%u_s)\right\|^2ds+2\int_0^t\int_\mathcal{O}u^+_s(x)\nu(dxds).\end{eqnarray*}
\end{proof}
Now we prove an estimate for the positive part $u^{+}$ of the
solution.
 For this we need the following notations:%
\begin{equation}
\label{f0+} \begin{split} &\bar{f}^{u-S',0}=\1_{\left\{
u>S'\right\} }\bar{f}^0,\;\bar{g}^{u-S',0}=\1_{\left\{
u>S'\right\} }\bar{g}^0,\;\bar{h}^{u-S',0}=\1_{\left\{
u>S'\right\} }\bar{h}^0,\\
&\bar{f}^{u-S'}=\bar{f}-\bar{f}^0+\bar{f}^{u-S',0},\;\bar{g}^{u-S'}=\bar{g}-\bar{g}^0+\bar{g}^{u-S',0},\;\bar{h}^{u-S'}=\bar{h}-\bar{h}^0+\bar{h}^{u-S',0}, \\
& \bar{f}^{u-S',0+}=\1_{\left\{ u>S'\right\} }\left(
\bar{f}^0\vee 0\right) ,\;(\xi-S'_0)
^{+}=(\xi-S'_0)\vee 0.\\
\end{split}
\end{equation}
\begin{proposition} Under the hypotheses of Proposition \ref{Ito2positive}, one has the following estimate:
\begin{eqnarray*} E\left(\left\|u^+\right\|^2_{2,\infty;t}\right)&\leq&2k(t)E\big(\left\|(\xi-S'_0)^+\right\|^2_2+\left(\left\|\bar{f}^{u-S',0+}\right\|^*_{\#;t}\right)^2+
\left\|\bar{g}^{u-S',0}\right\|^2_{2,2;t}+\left\|\bar{h}^{u-S',0}\right\|^2_{2,2;t}\\&+&\left\|
S^{'+}_0\right\|_2^2+\left( \left\|f^{',0+}\right\|_{\#;t}^*\right)
^2+\left\|g^{',0}\right\|_{2,2;t}^2+\left\|h^{',0}\right\|
_{2,2;t}^2\big)
\end{eqnarray*}
for each $t\in[0,T]$, where $k(t)$ is a constant that only depends
on the structure constants and $t$.
\end{proposition}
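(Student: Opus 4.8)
The plan is to apply the Itô formula for positive parts, Proposition \ref{Ito2positive}, not to $u$ itself but to $u-S'$, which by the construction in the proof of Theorem \ref{2estimate} is the solution $\cR(\xi-S'_0,\bar{f},\bar{g},\bar{h},S-S')$ of an OSPDE with the \emph{nonpositive} obstacle $S-S'$ (the coefficients $\bar{f},\bar{g},\bar{h}$ satisfying \textbf{(H)} and the data being square/$\#$-integrable, cf. \eqref{differenceweaker}), and then to pass to $u^+$ via the pointwise inequality $u^+\le(u-S')^++(S')^+$.

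First I would apply Proposition \ref{Ito2positive} to $(u-S',\nu)$ with coefficients $\bar{f},\bar{g},\bar{h}$, obtaining for all $t$, a.s., a formula for $\|(u_t-S'_t)^+\|_2^2+2\int_0^t\cE((u_s-S'_s)^+)\,ds$ consisting of the six terms of that proposition, the coefficients being evaluated at $(u_s-S'_s,\nabla(u_s-S'_s))$. The decisive simplification is that the measure term $2\int_0^t\int_\cO(u_s-S'_s)^+\nu(dx\,ds)$ vanishes: by the Skohorod condition for $u-S'$ already recorded above one has $(u-S')-(S-S')=0$ $\nu$-a.e., and since $S\le S'$ q.e. (hence $\nu$-a.e., $\nu$ charging no polar set) this forces $(u-S')^+=0$ $\nu$-a.e.

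Then I would estimate the remaining terms exactly as in the proof of Proposition \ref{estimation}. Because $(u-S')^+=\ind_{\{u>S'\}}(u-S')$ and $\nabla(u-S')^+=\ind_{\{u>S'\}}\nabla(u-S')$ a.e., every coefficient enters with the factor $\ind_{\{u>S'\}}$; the Lipschitz bounds of \textbf{(H)} on $\{u>S'\}$ then replace $\bar{f},\bar{g},\bar{h}$ by $\ind_{\{u>S'\}}\bar{f}^0,\ \ind_{\{u>S'\}}\bar{g}^0,\ \ind_{\{u>S'\}}\bar{h}^0$ up to terms $C(u-S')^+$ and $(\alpha$ or $\beta)\,|\nabla(u-S')^+|$. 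In the drift term $\int_0^t\big((u_s-S'_s)^+,\bar{f}_s(u_s-S'_s,\nabla(u_s-S'_s))\big)\,ds$ the nonnegative factor $(u-S')^+$ permits the further replacement of $\ind_{\{u>S'\}}\bar{f}^0$ by $\bar{f}^{u-S',0+}$, whereas the $\bar{g}$- and $\bar{h}$-terms keep only the indicator, giving $\bar{g}^{u-S',0}$ and $\bar{h}^{u-S',0}$. The duality inequality \eqref{dual2}, Young's inequality \eqref{young}, ellipticity $\cE((u-S')^+)\ge\lambda\|\nabla(u-S')^+\|_2^2$ combined with $2\alpha+\beta^2<2\lambda$, Gronwall's lemma and the Burkholder--Davis--Gundy inequality then yield, just as in Proposition \ref{estimation}, \[ E\!\left(\left\|(u-S')^+\right\|^2_{2,\infty;t}\right)\le k(t)\,E\!\left(\left\|(\xi-S'_0)^+\right\|_2^2+\left(\left\|\bar{f}^{u-S',0+}\right\|^*_{\#;t}\right)^2+\left\|\bar{g}^{u-S',0}\right\|^2_{2,2;t}+\left\|\bar{h}^{u-S',0}\right\|^2_{2,2;t}\right). \] Since $S'$ solves the linear SPDE \eqref{obstacle} \emph{without} obstacle, the same computation with no measure term — i.e. the positive-part version of the Itô formula of Lemma \ref{Itospdeweaker}, as in \cite{DMS09,DM11} — gives the analogous bound for $E(\|(S')^+\|^2_{2,\infty;t})$ in terms of $\|S^{'+}_0\|_2^2$, $(\|f^{',0+}\|^*_{\#;t})^2$, $\|g^{',0}\|^2_{2,2;t}$ and $\|h^{',0}\|^2_{2,2;t}$. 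Finally, from $u=(u-S')+S'$ one has $u^+\le(u-S')^++(S')^+$ pointwise, hence $\|u^+\|^2_{2,\infty;t}\le 2\|(u-S')^+\|^2_{2,\infty;t}+2\|(S')^+\|^2_{2,\infty;t}$; taking expectations and summing the two estimates gives the asserted inequality with constant $2k(t)$.

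The hard part will be the sign bookkeeping of the third step: one must see precisely why only $\bar{f}^0$ is replaced by its positive part $\bar{f}^{u-S',0+}$ whereas $\bar{g}^0$ and $\bar{h}^0$ merely acquire the indicator $\ind_{\{u>S'\}}$, and one must verify that the total coefficient of $\|\nabla(u-S')^+\|_2^2$ built up from the $\bar{g}$-, $\bar{h}$- and cross terms remains strictly below $2\lambda$, so that it can be absorbed into the energy term exactly as in Proposition \ref{estimation} — this is the only place where the contraction hypothesis $2\alpha+\beta^2<2\lambda$ really intervenes.
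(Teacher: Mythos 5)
Your proposal is correct and follows essentially the same route as the paper: apply Proposition \ref{Ito2positive} to $(u-S',\nu)$, annihilate the measure term via the Skohorod condition together with $S\leq S'$, rerun the argument of Proposition \ref{estimation} with the coefficients $\bar{f}^{u-S',0+}$, $\bar{g}^{u-S',0}$, $\bar{h}^{u-S',0}$, invoke the corresponding bound for $(S')^+$ (which the paper simply quotes from Theorem 4 of \cite{DMS09} rather than rederiving), and conclude from $u^+\leq (u-S')^++(S')^+$. The only difference is that you spell out the sign bookkeeping that the paper compresses into ``repeat word by word the proof of Proposition \ref{estimation}.''
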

\begin{proof}
Since $(u-S',\nu)=\mathcal{R}(\xi-S'_0,\bar{f},\bar{g},\bar{h},S-S')$, by Proposition \ref{Ito2positive},
we have almost surely $\forall t\in[0,T]$:
\begin{eqnarray*}&&\ \ \int_\mathcal{O}((u_t-S'_t)^+(x))^2dx+2\int_0^t\mathcal{E}((u_s-S'_s)^+)ds\\&&=\int_\mathcal{O}((\xi-S'_0)^+(x))^2dx+2\int_0^t((u_s-S'_s)^+,\bar{f}_s(u_s-S'_s,\nabla (u_s-S'_s)))ds
\\&&-2\int_0^t(\nabla
(u_s-S'_s)^+,\bar{g}_s(u_s-S'_s,\nabla(
u_s-S'_s))ds+2\int_0^t((u_s-S'_s)^+,\bar{h}_s(u_s-S'_s,\nabla
(u_s-S'_s))dB_s\\&&+\int_0^t\left\|
\1_{\{u_s-S'_s>0\}}(\bar{h}_s(u_s-S'_s,\nabla(
u_s-S'_s)))\right\|^2ds+2\int_0^t\int_\mathcal{O}(u_s-S'_s)^+(x)\nu(dxds).\end{eqnarray*}
As the support of $\nu$ is $\{u=S\}$, we have the following relation
$$\int_0^t\int_\mathcal{O}(u_s-S'_s)^+\nu(dxds)=\int_0^t\int_\mathcal{O}(S_s-S'_s)^+\nu(dxds)=0.$$
Then we repeat word by word the proof of Proposition \ref{estimation}, replacing $u-S'$, $\bar{f}$, $\bar{g}$, $\bar{h}$ and $\xi-S'_0$ by $(u-S')^+$, $\bar{f}^{u-S',0+}$, $\bar{g}^{u-S',0}$, $\bar{h}^{u-S',0}$ and $(\xi-S'_0)^+$ respectively.
Hence, we get the following estimate:
\begin{eqnarray*}E(\left\|(u-S')^+\right\|^2_{2,\infty;t}&+&\left\|\nabla(u-S')^+\right\|^2_{2,2;t})\leq k(t)E\big(\left\|(\xi-S'_0)^+\right\|^2_2\\&&\qquad\qquad\qquad+\left(\left\|\bar{f}^{u-S',0+}\right\|^*_{\#;t}\right)^2+
\left\|\bar{g}^{u-S',0}\right\|^2_{2,2;t}+\left\|\bar{h}^{u-S',0}\right\|^2_{2,2;t}\big).
\end{eqnarray*}
Moreover, from Theorem 4 in \cite{DMS09}, we know that
\begin{eqnarray*}E\left(\left\| S^{'+}\right\| _{2,\infty ;t}^2+\left\|
\nabla S^{'+}\right\| _{2,2;t}^2\right)\leq k\left( t\right)E\left(
\left\| S^{'+}_0\right\|_2^2+\left( \left\|f^{',0+}\right\|
_{\#;t}^*\right) ^2+\left\|g^{',0}\right\|
_{2,2;t}^2+\left\|h^{',0}\right\| _{2,2;t}^2\right).
\end{eqnarray*}
where $S^{'+}_0=S'_0\vee0$, $f^{',0+}=\1_{\{S'>0\}}(f'\vee0)$,
$g^{',0}=\1_{\{S'>0\}}g'$ and $h^{',0}=\1_{\{S'>0\}}h'$. Then with the
relation:
$$E\left\|(u)^+\right\|^2_{2,\infty;t}\leq 2E\left(\left\|(u-S')^+\right\|^2_{2,\infty;t}+\left\|(S')^+\right\|^2_{2,\infty;t}\right),$$
we get the desired estimate.
\end{proof}

\section{Comparison theorem}
In this section we will establish a comparison theorem for the solutions of two OSPDE, 
$(u^1,\nu^1)=\cR^{\#}(\xi^1,f^1,g,h,S^1)$ and $(u^2,\nu^2)=\cR^{\#}(\xi^2,f^2,g,h,S^2)$, 
where $(\xi^i,f^i,g,h,S^i)$ satisfy assumptions {\bf(H)}, {\bf(O)}, {\bf
(HI\#)} and {\bf (HO\#)}. It is obvious that $(u^1-S',\nu^1)=\cR^{\#}(\xi^1-S'_0,\bar{f}^1,\bar{g},\bar{h},S^1-S')$ 
and $(u^2-S',\nu^2)=\cR^{\#}(\xi^2-S'_0,\bar{f}^2,\bar{g},\bar{h},S^2-S')$, where $\bar{f}^i$, $\bar{g}$, $\bar{h}$
are defined as in \eqref{BAR} and $(\xi^i-S'_0,\bar{f}^i,\bar{g},\bar{h}, S^i-S')$ satisfy assumptions {\bf(H)}, {\bf(O)}, {\bf(HI\#)} and {\bf (HO2)}. 

\begin{theorem} \label{comparison}Assume that the following conditions
hold\begin{enumerate}
      \item $\xi^1\leq\xi^2,\ dx\otimes dP-a.e.$
      \item $f^1(u^1,\nabla u^1)\leq f^2(u^1,\nabla u^1),\ dt\otimes dx\otimes dP-a.e.$
      \item $S^1\leq S^2,\ dt\otimes dx\otimes dP-a.e.$
    \end{enumerate}
Then for almost all $\omega\in\Omega$, $u^1(t,x)\leq u^2(t,x),\
q.e.$\end{theorem}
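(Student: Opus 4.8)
The plan is to reduce everything to an estimate for the positive part of the difference $u^1-u^2$ and to show this positive part vanishes. First I would pass to the shifted processes: set $\hat u^i=u^i-S'$, so that $(\hat u^1,\nu^1)=\cR^{\#}(\xi^1-S'_0,\bar f^1,\bar g,\bar h,S^1-S')$ and similarly for $i=2$, exactly as recorded just before the statement. Writing $\delta u=\hat u^1-\hat u^2=u^1-u^2$, $\delta\xi=\xi^1-\xi^2$, and letting $\delta f$, $\delta g$, $\delta h$ denote the differences of the (shifted) coefficients evaluated along the respective solutions, the process $\delta u$ satisfies the OSPDE-type equation
\begin{equation*}
d(\delta u_t)+A(\delta u_t)\,dt=\delta f_t\,dt+\mathrm{div}\,\delta g_t\,dt+\delta h_t\,dB_t+(\nu^1-\nu^2)(dx,dt),
\end{equation*}
with $\delta u_0=\delta\xi\le0$. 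The aim is to apply the It\^o formula of Theorem \ref{Itosweaker} (more precisely the positive-part version, Proposition \ref{Ito2positive}, applied to the difference of two solutions as in Theorem 6 of \cite{DMZ12}) to $((\delta u)^+)^2$ and conclude $E\|(\delta u)^+\|_{2,\infty;t}^2=0$.

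The crucial point is the sign of the measure term $\int_0^t\int_\cO(\delta u_s)^+\,(\nu^1-\nu^2)(dx,ds)$. Using the Skohorod conditions: on the support of $\nu^1$ one has $u^1=S^1$, hence $(\delta u_s)^+=(S^1_s-u^2_s)^+\le(S^1_s-S^2_s)^+=0$ by hypothesis 3 (since $u^2\ge S^2$), so $\int\int(\delta u_s)^+\,\nu^1(dx,ds)=0$; while $\int\int(\delta u_s)^+\,\nu^2(dx,ds)\ge0$ trivially. Therefore $\int_0^t\int_\cO(\delta u_s)^+\,(\nu^1-\nu^2)(dx,ds)\le0$ and this term can be dropped from the energy identity. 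The linear term $2\int_0^t((\delta u_s)^+,\delta f_s)\,ds$ is handled by splitting $\delta f_s=\big(f^1_s(u^1,\nabla u^1)-f^2_s(u^1,\nabla u^1)\big)+\big(f^2_s(u^1,\nabla u^1)-f^2_s(u^2,\nabla u^2)\big)$: the first bracket is $\le0$ on $\{\delta u>0\}$ by hypothesis 2, hence contributes a nonpositive term, and the second is controlled by the Lipschitz assumption {\bf(H)} in terms of $\|(\delta u)^+\|$ and $\|\nabla(\delta u)^+\|$, using that $\nabla(\delta u)^+=\1_{\{\delta u>0\}}\nabla(\delta u)$. The $\delta g$ and $\delta h$ terms are treated by the Lipschitz bounds in {\bf(H)} together with Cauchy--Schwarz and Young's inequality \eqref{young} exactly as in the proof of Proposition \ref{estimation}, and the contraction condition $2\alpha+\beta^2<2\lambda$ ensures the gradient terms can be absorbed into $\lambda\|\nabla(\delta u)^+\|_{2,2;t}^2$.

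After taking expectations (with a standard stopping-time localisation to kill the martingale part, which is controlled via Burkholder--Davis--Gundy as in Proposition \ref{estimation}), one arrives at
\begin{equation*}
E\|(\delta u_t)^+\|_2^2+\gamma\,E\|\nabla(\delta u)^+\|_{2,2;t}^2\le C\,E\|(\delta\xi)^+\|_2^2+C\int_0^tE\|(\delta u_s)^+\|_2^2\,ds,
\end{equation*}
where the inhomogeneous data have all been eliminated by the sign hypotheses. Since $\delta\xi\le0$ gives $(\delta\xi)^+=0$, Gronwall's lemma yields $E\|(\delta u_t)^+\|_2^2=0$ for every $t$, hence $u^1\le u^2$ $\,dt\otimes dx\otimes dP$-a.e. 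Finally, to upgrade this almost-everywhere inequality to a quasi-everywhere one, I would invoke the quasi-continuity of $u^1$ and $u^2$ established in Theorem \ref{2estimate}: the set $\{u^1>u^2\}$ is then, up to a polar set, of the form considered in the parabolic potential theory of Section 2, and a function that is quasi-continuous and vanishes $dt\otimes dx$-a.e. vanishes q.e. (this is where Proposition \ref{Versiont} and the capacity framework enter). I expect the main obstacle to be precisely this last step --- making the passage from ``a.e.'' to ``q.e.'' rigorous --- together with the careful bookkeeping needed to justify applying the positive-part It\^o formula to the difference $u^1-u^2$ rather than to a single solution; the energy estimates themselves are routine once the measure term is shown to have the right sign.
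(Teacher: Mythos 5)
Your proposal is correct and follows essentially the same route as the paper: an energy estimate for $((u^1-u^2)^+)^2$ obtained from the positive-part It\^o formula for the difference of the two shifted solutions, the nonpositivity of $\int\!\!\int (u^1-u^2)^+(\nu^1-\nu^2)(dx,ds)$ derived from the Skohorod conditions together with $u^2\ge S^2$ and $S^1\le S^2$, the sign of $f^1(u^1,\nabla u^1)-f^2(u^1,\nabla u^1)$ killing the inhomogeneous term, and a Gronwall-type conclusion. The bookkeeping you defer --- justifying the positive-part It\^o formula for the difference under the weak integrability condition --- is precisely what the paper supplies, via a double approximation (bounded $f^{i,0}_n$ to reduce to \textbf{(HI2)} so that Theorem 6 of \cite{DMZ12} applies, then smooth $\psi_m\to\varphi(\cdot^{+})$), followed by passage to the limit in $m$ and then in $n$.
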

\begin{proof} 
We take the functions: $\bar{f}^{1,n}(\omega,t,x):=\bar{f}^1(u^1-S',\nabla(u^1-S'))-\bar{f}^{1,0}+\bar{f}^{1,0}_n$ 
and $\bar{f}^{2,n}(\omega,t,x):=\bar{f}^2(u^2-S',\nabla(u^2-S'))-\bar{f}^{2,0}+\bar{f}^{2,0}_n$, 
where $\bar{f}^{1,0}_n$ and $\bar{f}^{2,0}_n,\ n\in\bbN^*$
are two sequences of bounded functions such that $E\big(\big\|\bar{f}^{1,0}-\bar{f}^{1,0}_n\big\|_{\#;t}^*\big)^2$ and 
$E\big(\big\|\bar{f}^{2,0}-\bar{f}^{2,0}_n\big\|_{\#;t}^*\big)^2$ tend to $0$ as $n\rightarrow\infty$. 
We consider the following two equations 
\begin{equation*}
d\bar{u}_t^{1,n}(x)+A\bar{u}_t^{1,n}(x)dt=\bar{f}^{1,n}_t(x)dt+div\breve{g}_t^1(x)dt+\breve{h}^1_t(x)dB_t+\nu^{1,n}(x,dt)\,,
\end{equation*}and
\begin{equation*}
d\bar{u}_t^{2,n}(x)+A\bar{u}_t^{2,n}(x)dt=\bar{f}^{2,n}_t(x)dt+div\breve{g}_t^2(x)dt+\breve{h}^2_t(x)dB_t+\nu^{2,n}(x,dt)\,,
\end{equation*}
where $\breve{g}^i(\omega,t,x)=\bar{g}(\omega,t,x,u^i-S',\nabla(u^i-S'))$ and $\breve{h}^i(\omega,t,x)=\bar{h}(\omega,t,x,u^i-S',\nabla(u^i-S'))$ for $i=1,2$.
From the proof of Theorem \ref{Itosweaker}, we know that $(\bar{u}^{i,n},\nu^{i,n})$ converge to $(\bar{u}^i,\nu^i)$. 
\\As $(\bar{u}^{i,n},\nu^{i,n})=\cR(\xi^i-S'_0,\bar{f}^{i,n},\breve{g}^i, \breve{h}^i,S^i-S')$ with $(\xi^i-S'_0,\bar{f}^{i,n},\breve{g}^i, \breve{h}^i,S^i-S')$
satisfy assumptions {\bf(H)}, {\bf(O)}, {\bf(HI2)} and {\bf (HO2)}, we have the following It\^o's formula for the difference $\hat{\bar{u}}^n:=\bar{u}^{1,n}-\bar{u}^{2,n}$ of the solutions of 
two OSPDE (see Theorem 6 in \cite{DMZ12}), for all $t\in[0,T]$ and any function $\varphi: \R\longrightarrow \R$ of class $C^2$ with bounded second order derivative such that $\varphi' (0)=0$ : 
\begin{eqnarray}\label{itoito}&&\int_\mathcal{O}\varphi(\hat{\bar{u}}^n_t(x))dx+\int_0^t\mathcal{E}(\varphi'(\hat{\bar{u}}^n_s),\hat{\bar{u}}^n_s)ds
=\int_\cO\varphi(\hat{\xi}(x))dx+\int_0^t(\varphi'(\hat{\bar{u}}^n_s),\hat{\bar{f}}^n_s)ds
\\&&-\sum_{k=1}^d\int_0^t\int_\mathcal{O}\varphi''(\hat{\bar{u}}^n_s(x))
\partial_k(\hat{\bar{u}}^n_s(x))\hat{\bar{g}}^k_s(x)dxds+\sum_{j=1}^{+\infty}\int_0^t\int_\cO\varphi'(\hat{\bar{u}}^n_s(x))\hat{\bar{h}}^j_s(x)dxdB^j_s
\nonumber\\&&+\frac{1}{2}\sum_{j=1}^{+\infty}\int_0^t\int_\mathcal{O}\varphi''(\hat{\bar{u}}^n_s(x))(\hat{\bar{h}}^j_s(x))^2dxds
+\int_0^t\int_\mathcal{O}\varphi'(\hat{\bar{u}}^n_s(x))(\nu^{1,n}-\nu^{2,n})(dx, ds)\quad a.s.\nonumber
\end{eqnarray}
Then we approximate the function $\psi:\ y\in\mathbb{R}\rightarrow\varphi(y^+)$ by a sequence $(\psi_m)$ of regular functions. Let $\zeta$ be a $\mathcal{C}^\infty$ increasing function such that
$$\forall y\in]-\infty,1],\ \zeta(y)=0\ and\ \forall y\in[2,+\infty[,\ \zeta(y)=1.$$
We set for all $n$:$$\forall y\in\mathbb{R},\ \
\psi_m(y)=\varphi(y)\zeta(ny).$$ It is easy to verify that
$(\psi_m)$ converges uniformly to the function $\psi$, $(\psi'_m)$
converges everywhere to the function $(y\rightarrow\varphi'(y^+))$
and $(\psi''_m)$ converges everywhere to the function $(y\rightarrow
\1_{\{y>0\}}\varphi''(y^+))$. Moreover we have the estimates:
\begin{equation}\label{controlofpsi}
\forall y\in\mathbb{R}^+,\ m\in\mathbb{N}^*,\ \
0\leq\psi_m(y)\leq\psi(y),\ \ 0\leq\psi'_m(y)\leq Cy,\ \
\left|\psi_m''(y)\right|\leq C,
\end{equation}
where $C$ is a constant. Thanks to \eqref{itoito} we have
almost surely, for $t\in[0,T]$,
\begin{eqnarray*}&&\int_\mathcal{O}\psi_m(\hat{\bar{u}}^n_t(x))dx+\int_0^t\mathcal{E}(\psi_m'(\hat{\bar{u}}^n_s),\hat{\bar{u}}^n_s)ds
=\int_\cO\psi_m(\hat{\xi}(x))dx+\int_0^t(\psi_m'(\hat{\bar{u}}^n_s),\hat{\bar{f}}^n_s)ds
\\&&-\sum_{k=1}^d\int_0^t\int_\mathcal{O}\psi_m''(\hat{\bar{u}}^n_s(x))
\partial_k(\hat{\bar{u}}^n_s(x))\hat{\bar{g}}^k_s(x)dxds+\sum_{j=1}^{+\infty}\int_0^t\int_\cO\psi_m'(\hat{\bar{u}}^n_s(x))\hat{\bar{h}}^j_s(x)dxdB^j_s
\nonumber\\&&+\frac{1}{2}\sum_{j=1}^{+\infty}\int_0^t\int_\mathcal{O}\psi_m''(\hat{\bar{u}}^n_s(x))(\hat{\bar{h}}^j_s(x))^2dxds
+\int_0^t\int_\mathcal{O}\psi_m'(\hat{\bar{u}}^n_s(x))(\nu^{1,n}-\nu^{2,n})(dx, ds)\,.
\end{eqnarray*}
Making $m$ tend to $+\infty$ and using the fact that $\1_{\{\hat{\bar{u}}>0\}}\partial_k\hat{\bar{u}}_s=\partial_k\hat{\bar{u}}_s^+$, we get by the
dominated convergence theorem the following formula:
\begin{eqnarray*}&&\int_\mathcal{O}\varphi(\hat{\bar{u}}^+_{n,t}(x))dx+\int_0^t\mathcal{E}(\varphi'(\hat{\bar{u}}^+_{n,s}),\hat{\bar{u}}^+_{n,s})ds=\int_\cO\varphi(\hat{\xi}^+(x))dx+\int_0^t(\varphi'(\hat{\bar{u}}^+_{n,s}),\hat{\bar{f}}^n_s)ds
\\&&-\sum_{k=1}^d\int_0^t\int_\mathcal{O}\varphi''(\hat{\bar{u}}^+_{n,s}(x))
\partial_k(\hat{\bar{u}}^+_{n,s}(x))\hat{\bar{g}}^k_s(x)dxds+\sum_{j=1}^{+\infty}\int_0^t\int_\cO\varphi'(\hat{\bar{u}}^+_{n,s}(x))\hat{\bar{h}}^j_s(x)dxdB^j_s
\nonumber\\&&+\frac{1}{2}\sum_{j=1}^{+\infty}\int_0^t\int_\mathcal{O}\varphi''(\hat{\bar{u}}^+_{n,s}(x))\1_{\{\hat{\bar{u}}_{n,s}>0\}}|\hat{\bar{h}}^j_s(x)|^2dxds+\int_0^t\int_\mathcal{O}\varphi'(\hat{\bar{u}}^+_{n,s}(x))\hat{\nu}^n(dx, ds)\,,\ a.s.\nonumber
\end{eqnarray*}
Thanks to the strong convergence of $(\hat{\bar{u}}^n)_n$, by taking $n$ tend to $+\infty$, we obtain:
\begin{eqnarray*}&&\int_\mathcal{O}\varphi(\hat{\bar{u}}^+_{t}(x))dx+\int_0^t\mathcal{E}(\varphi'(\hat{\bar{u}}^+_{s}),\hat{\bar{u}}^+_{s})ds=\int_\cO\varphi(\hat{\xi}^+(x))dx+\int_0^t(\varphi'(\hat{\bar{u}}^+_{s}),\hat{\bar{f}}_s)ds
\\&&-\sum_{k=1}^d\int_0^t\int_\mathcal{O}\varphi''(\hat{\bar{u}}^+_{s}(x))
\partial_k(\hat{\bar{u}}^+_{s}(x))\hat{\bar{g}}^k_s(x)dxds+\sum_{j=1}^{+\infty}\int_0^t\int_\cO\varphi'(\hat{\bar{u}}^+_{s}(x))\hat{\bar{h}}^j_s(x)dxdB^j_s
\nonumber\\&&+\frac{1}{2}\sum_{j=1}^{+\infty}\int_0^t\int_\mathcal{O}\varphi''(\hat{\bar{u}}^+_{s}(x))\1_{\{\hat{\bar{u}}_{s}>0\}}|\hat{\bar{h}}^j_s(x)|^2dxds+\int_0^t\int_\mathcal{O}\varphi'(\hat{\bar{u}}^+_{s}(x))\hat{\nu}(dx, ds)\,,\ a.s.\nonumber
\end{eqnarray*}
Taking $\varphi(x)=x^2$, we have almost surely for all $t\in[0,T]$,
\begin{eqnarray*}&&\int_\mathcal{O}(\hat{\bar{u}}^+_{t}(x))^2dx+2\int_0^t\mathcal{E}(\hat{\bar{u}}^+_{s})ds=\int_\cO(\hat{\xi}^+(x))^2dx+2\int_0^t(\hat{\bar{u}}^+_{s},\hat{\bar{f}}_s)ds\\&&-2\sum_{k=1}^d\int_0^t\int_\mathcal{O}
\partial_k\hat{\bar{u}}^+_{s}(x)\hat{\bar{g}}^k_s(x)dxds+2\sum_{j=1}^{+\infty}\int_0^t\int_\cO\hat{\bar{u}}^+_{s}(x)\hat{\bar{h}}^j_s(x)dxdB^j_s
\nonumber\\&&+\sum_{j=1}^{+\infty}\int_0^t\int_\mathcal{O}\1_{\{\hat{\bar{u}}_{s}>0\}}|\hat{\bar{h}}^j_s(x)|^2dxds+2\int_0^t\int_\mathcal{O}\hat{\bar{u}}^+_{s}(x)\hat{\nu}(dx, ds)\,.\nonumber
\end{eqnarray*}
Remarking the following relation
\begin{eqnarray*}&&\int_0^t\int_\mathcal{O}\hat{\bar{u}}_s^+(x)\hat{\nu}(dxds)=\int_0^t\int_\cO(u_s^1-S'_s-(u^2_s-S'_s))^+\hat{\nu}(dxds)
=\int_0^t\int_\cO(u^1-u^2)^+\hat{\nu}(dxds)\\&&=\int_0^t\int_\mathcal{O}(S^1-u^2)^+\nu^1(dxds)
-\int_0^t\int_\mathcal{O}(u^1-S^2)^+\nu^2(dxds)\leq0 .
\end{eqnarray*}
Then a similar argument as in the proof of Proposition \ref{estimation} yields the following estimate: 
\begin{eqnarray*}
E\left(\left\|\hat{\bar{u}}^+\right\|^2_{2,\infty;t}+\left\|\nabla\hat{\bar{u}}^+\right\|^2_{2,2;t}\right)\leq
k(t)E\left(\left\|\hat{\xi}^+\right\|^2_2+\left(\left\|\hat{\bar{f}}^{\hat{\bar{u}},0+}\right\|^*_{\theta;t}\right)^2+\left\|\hat{\bar{g}}^{\hat{\bar{u}},0}\right\|^2_{2,2;t}+\left\|\hat{\bar{h}}^{\hat{\bar{u}},0}\right\|^2_{2,2;t}\right).
\end{eqnarray*}
This implies $\bar{u}^1\leq\bar{u}^2$ since $\hat{\xi}\leq0$, $\hat{\bar{f}}^0\leq0$
and $\hat{\bar{g}}^0=\hat{\bar{h}}^0=0$. Thanks to the uniqueness of the solution of OSPDE (see Theorem \ref{2estimate}), 
we know that $\bar{u}^1=u^1-S'$ and $\bar{u}^2=u^2-S'$ which yields the desired result. 
\end{proof}

\section{Appendix}
\subsection{Proof of Lemma \ref{Itospdeweaker}}
\begin{proof}
We take the function $f_n(\omega,t,x):=f(\omega,t,x,u,\nabla
u)-f^0+f_n^0$, where $f_n^0,\ n\in\bbN^*$, is a sequence of bounded
functions such that
$E\left(\left\|f^0-f_n^0\right\|^*_{\#;t}\right)^2\rightarrow0,\ as\
n\rightarrow+\infty.$ We consider the following equation
\begin{equation*}
du_t^n(x)+Au_t^n(x)dt=f_t^n(x)dt+div\breve{g}_t(x)dt+\breve{h}_t(x)dB_t
\end{equation*}
where $\breve{g}(\omega,t,x)=g(\omega,t,x,u,\nabla u)$ and $\breve{h}(\omega,t,x)=h(\omega,t,x,u,\nabla u)$.
This is a linear equation in $u^n$, from \cite{DenisStoica}, we know that $u^n$ uniquely exists. \\
Applying It\^o's formula to $(u^n-u^m)^2$, almost surely, for all $t\in[0,T]$, 
\begin{eqnarray*}
\left\|u^n_t-u^m_t\right\|^2+2\int_0^t\cE(u^n_s-u^m_s)ds&=&2\int_0^t(u^n_s-u^m_s, f^n_s-f^m_s)ds.
\end{eqnarray*}
From (\ref{dual2}), we have, for $\delta>0$,
\begin{eqnarray*}
2\left|\int_0^t(u^n_s-u^m_s,f^n_s-f^m_s)ds\right|&\leq&\delta\left\|u^n-u^m\right\|_{\#;t}^2+C_\delta\left(\left\|f^n-f^m\right\|_{\#;t}^*\right)^2.
\end{eqnarray*}
Since $\cE(u^n-u^m)\geq\lambda\left\|\nabla(u^n-u^m)\right\|^2_2$, we deduce that, for all $t\in[0,T]$, almost surely,
\begin{eqnarray}\label{unminusum}
\left\|u_t^n-u_t^m\right\|^2+2\lambda\left\|\nabla(u^n-u^m)\right\|^2_{2,2;t}\leq\delta\left\|u^n-u^m\right\|_{\#;t}^2+C_\delta\left(\left\|f^n-f^m\right\|_{\#;t}^*\right)^2.
\end{eqnarray}
Taking the supremum and the expectation, we get
\begin{equation*}
E\left(\left\|u^n-u^m\right\|^2_{2,\infty;t}+\left\| \nabla
(u^n-u^m)\right\| _{2,2;t}^2\right)\leq\delta E\left\|u^n-u^m\right\|^2_{\#;t}+C_\delta E\left(\left\|f^n-f^m\right\|_{\#;t}^*\right)^2.
\end{equation*}
Dominating the term $E\left\|u^n-u^m\right\| _{\#;t}^2$ by
using the estimate (\ref{sobolev}) and taking $\delta$ small enough, we obtain the following estimate:
\begin{eqnarray*}E\left(\left\|u_n-u_m\right\|^2_{2,\infty;t}+\left\|\nabla
(u_n-u_m)\right\|^2_{2,2;t}\right) \leq C_\delta E\left(\left\|f^n-f^m\right\|_{\#;t}^*\right)^2\rightarrow0,\ when\ n,\,m\rightarrow\infty .
\end{eqnarray*}
Therefore $(u^n)$ has a limit $u$ in $\cH$.
\\ See for example \cite{DMS05}, we know that for $u^n$ we have the following It\^o's formula, for all $t\in[0,T]$, $P-a.s.$: 
\begin{eqnarray*}&&
\int_{{\cal O}}\varphi \left( u_t^n\left( x\right) \right)
dx+\int_0^t{\cal E} \left( \varphi ^{\prime }\left( u_s^n\right)
,u_s^n\right) ds=\int_{{\cal O} }\varphi \left( \xi \left( x\right)
\right) dx+\int_0^t\left( \varphi ^{\prime }\left( u_s^n\right)
,f_s^n\right) ds
\\&&-\int_0^t\sum_{i=1}^d\left(
\partial _i\left(\varphi ^{\prime }\left( u_s^n\right)\right)
,\breve{g}^i_s\right) ds+\frac 12\int_0^t\left(
\varphi ^{\prime \prime }\left( u_s^n\right) ,\left| \breve{h}_{s}\right| ^2\right) ds
+\sum_{j=1}^{\infty}\int_0^t\left( \varphi ^{\prime }\left(
u_s^n\right) ,\breve{h}_s^j\right)
dB_s^j.
\end{eqnarray*}
Now, we pass to the limit as $n$ tend to $+\infty$:
\begin{eqnarray*}
&&\left|\int_0^t\left( \varphi ^{\prime }\left( u_s^n\right), f_s^n\right) ds-\int_0^t\left( \varphi ^{\prime }\left( u_s\right), f_s\right) ds\right|\\&\leq&
\left|\int_0^t\left( \varphi ^{\prime }\left( u_s^n\right)- \varphi ^{\prime }\left( u_s\right), f_s^n\right) ds\right|
+\left|\int_0^t\left(\varphi ^{\prime }\left( u_s\right), f_s^n-f_s\right) ds\right|\\&\leq&C\left\|u^n-u\right\|_{\#;t}\left\|f^n\right\|_{\#;t}^*+C\left\|u\right\|_{\#;t}\left\|f^n-f\right\|_{\#;t}^*.
\end{eqnarray*}
The relation (\ref{sobolev}) and the strong convergence of $(u^n)$ yield that $E\left\|u^n-u\right\|_{\#;t}\rightarrow0$, as $n\rightarrow\infty$. So, by extracting a subsequence, we can assume that the right member in the previous inequality tends to $0$ $P-$almost surely as $n$ tends to $+\infty$.
 So we have
$$\lim_{n\rightarrow +\infty}\int_0^t\left( \varphi ^{\prime }\left( u_s^n\right), f_s^n\right) ds=\int_0^t\left( \varphi ^{\prime }\left( u_s\right), f_s\right) ds.$$
The convergence of the other terms are easily deduced from the strong convergence of $(u^n)$ to $u$ in $\cH_T$ and yield the desired formula.
\end{proof}
\subsection{Proof of Lemma \ref{wquasicontinue}}
\begin{proof}First of all, this equation is a special case of Theorem 3 in \cite{DMS09} hence, we know that $w$ exists is unique and belongs to $\cH_T$.\\
Following M.Pierre \cite{Pierre, PIERRE} and F.Mignot and J.Puel \cite{MignotPuel}, we define
$$\kappa(w,0):=ess\inf\{u\in\cP;\ u\geq w\ a.e.\, ,\ u(0)\geq0\}.$$

We consider the following equation:
\begin{equation} \label{eq:1}
\left\{ \begin{split}
         &\frac{\partial v_t^n}{\partial t}=Lv_t^n+n(v_t^n-w_t)^-\\
                  & v^n_0=0
                          \end{split} \right.
                          \end{equation}
From \cite{MignotPuel}, for almost all $\omega\in\Omega$, we know that $v^n(\omega)$ converges weakly to $v(\omega):=\kappa^\omega(w,0)$ in $L^2(0,T;H_0^1(\cO))$ and that $v(\omega)\geq w(\omega)$.
\\(\ref{eq:w})-(\ref{eq:1}) yields
\begin{equation*}
d(v_t^n-w_t)+A(v_t^n-w_t)dt=(n(v_t^n-w_t)^--f_t^0)dt
\end{equation*}
so, we have the following relation almost surely, $\forall t\geq0$,
\begin{eqnarray*}
\left\|v_t^n-w_t\right\|^2+2\int_0^t\cE(v_s^n-w_s)ds=2\int_0^t\int_\cO(v_s^n-w_s)n(v_s^n-w_s)^-dxds-2\int_0^t(v_s^n-w_s,f_s^0)ds.
\end{eqnarray*}
The first term is negative and
\begin{eqnarray*}
\left|\int_0^t(v_s^n-w_s,f_s^0)ds\right|\leq\delta\left\|v^n-w\right\|^2_{\#;t}+C_\delta\left(\left\|f^0\right\|^*_{\#;t}\right)^2.
\end{eqnarray*}
Therefore
\begin{eqnarray*}
\left\|v_t^n-w_t\right\|^2_2+2\lambda\left\|\nabla(v^n-w)\right\|^2_{2,2;t}\leq 2\delta\left\|v^n-w\right\|^2_{\#;t}+2C_\delta\left(\left\|f^0\right\|^*_{\#;t}\right)^2.
\end{eqnarray*}
Taking the supremum and the expectation,  we get
\begin{eqnarray*}
E\left\|v^n-w\right\|^2_{2,\infty;t}\leq 2\delta E\left\|v^n-w\right\|^2_{\#;t}+2C_\delta E\left(\left\|f^0\right\|^*_{\#;t}\right)^2.
\end{eqnarray*}
Dominating the term $E\left\|v^n-w\right\| _{\#;t}^2$ by
using the estimate (\ref{sobolev}) and taking $\delta$ small enough, we obtain
\begin{equation*}
E\left\|v^n-w\right\|^2_{2,\infty;t}+E\left\|\nabla(v^n-w)\right\|^2_{2,2;t}\leq CE\left(\left\|f^0\right\|^*_{\#;t}\right)^2.
\end{equation*}
By Fatou's lemma, we have
\begin{equation}\label{estimw}
E\sup_{t\in[0,T]}\left\|\kappa_t-w_t\right\|^2+E\int_0^T\cE(\kappa_t-w_t)dt\leq CE\int_0^T\left(\left\|f^0_t\right\|^*_{\#}\right)^2dt.
\end{equation}
We now consider  a sequence of  random functions   $(f^{0,n})_{n \in \N^*} $ which belongs in $ L^2(\Omega)\otimes C_c^\infty(\bbR^+)\otimes C_c^\infty(\cO)$  and such that $E\left\|f^{0,n}-f^0\right\|_{\#;t}^*\rightarrow0$.
%Moreover,  we introduce $(P_t)$ the semi-group associated to $A$ and set  for all $n\in\bbN^*$,
%\begin{eqnarray*}
%w_t^n=\int_0^tP_{t-s}f_s^{0,n}ds
%\end{eqnarray*}
Let $w^n$  be the solution of
\begin{equation*}
\left\{ \begin{split}
         &dw_t^n+Aw_t^ndt=f_t^{0,n}dt\\
                  & w_0^n=0.
                          \end{split} \right.
\end{equation*}
Then it's well known that  $w^n$ is $P-$almost surely continuous in $(t,x)$ (see for example \cite{Aronson3}).
%We denote by $G(t,x,s,y)$ the kernel associated to $P_t$, then
%\begin{eqnarray*}
%w_t^n=\int_0^t\int_\cO G(t,x,s,y)f^{0,n}(s,\,y)dyds.
%\end{eqnarray*}
%As $A$ is strictly elliptic, $G$ is uniformly continuous in space-time variables on any compact away from the diagonal in time
%(see Theorem 6 in\cite{Aronson}) and satisfies Gaussian estimates (see \cite{Aronson3}), this ensures that for all $n$, $w^n$ is continuous in $(t,x)$.
\\ Then, we consider a sequence of random open sets
$$\vartheta_n=\{|w^{n+1}-w^n|>\epsilon_n\},\quad\Theta_p=\bigcup_{n=p}^{+\infty}\vartheta_n,$$
and
$\kappa_n=\kappa(\frac{1}{\epsilon_n}(w^{n+1}-w^n),0)+\kappa(-\frac{1}{\epsilon_n}(w^{n+1}-w^n),0)$.
From the definition of $\kappa$ and the relation (see \cite{PIERRE}), we get
$$\kappa(|v|)\leq\kappa(v,v^+(0))+\kappa(-v,v^-(0).)$$
 Moreover,  $\kappa_n$ satisfy the conditions of Lemma 3.3 in \cite{PIERRE}, i.e. $\kappa_n\in\mathcal{P}$ and $\kappa_n\geq1\ a.e.$ on $\vartheta_n$,
therefore,  we get the following relation:
\begin{eqnarray*}
E[cap\, (\Theta_p )]\leq\sum_{n=p}^{+\infty}E[cap\, (\vartheta_n )]\leq\sum_{n=p}^{+\infty}E\left\|\kappa_n\right\|^2_{\mathcal{K}}\leq
2C\sum_{n=p}^{+\infty}\frac{1}{\epsilon_n^2}E\int_0^T\left(\left\|f^{0,n+1}_t-f^{0,n}_t\right\|^*_{\#}\right)^2dt,
\end{eqnarray*}
where the last inequality comes from (\ref{estimw}).\\
By extracting a subsequence, we can consider that
\begin{eqnarray*}E\int_0^T\left(\left\|f^{0,n+1}_t-f^{0,n}_t\right\|^*_{\#}\right)^2dt\leq\frac{1}{2^n} \end{eqnarray*}
and  taking $\epsilon_n=\frac{1}{n^2}$ to get
\begin{eqnarray*}E[cap\; (\Theta_p)]\leq\sum_{n=p}^{+\infty}\frac{2Cn^4}{2^n}. \end{eqnarray*}
Therefore\begin{eqnarray*}\lim_{p\rightarrow+\infty}E[cap\; (\Theta_p)]=0.\end{eqnarray*}
Finally, for almost all $\omega\in\Omega$, $w^n(\omega)$ is continuous in $(t,x)$ on $(\Theta_p(\omega))^c$ and $(w^n(\omega))$ converges uniformly to $w(\omega)$ on $(\Theta_p(\omega))^c$ for all $p$, hence, $w(\omega)$ is continuous in $(t,x)$ on $(\Theta_p(\omega))^c$, then from the definition of quasi-continuity, we know that $w(\omega)$ admits a quasi-continuous version since $cap(\Theta_p)$ tends to 0 almost surely as $p$ tends to $+\infty$.
\end{proof}

\begin{center}
\begin{minipage}[t]{7cm}
Laurent DENIS \\
Laboratoire d'Analyse et Probabilit\'es\\
 Universit{\'e} d'Evry Val
d'Essonne\\
23 Boulevard de France\\
 F-91037 Evry Cedex, FRANCE\\
 e-mail: ldenis{\char'100}univ-evry.fr
\end{minipage}
\hfill
\begin{minipage}[t]{7cm}
 Anis MATOUSSI \\
 LUNAM Université, Université du Maine\\
 Fédération de Recherche 2962 du CNRS\\
 Mathématiques des Pays de Loire \\
Laboratoire Manceau de Mathématiques\\
 Avenue Olivier Messiaen\\ F-72085 Le Mans Cedex 9, France \\
email : anis.matoussi@univ-lemans.fr\\
and \\
CMAP,  Ecole Polytechnique, Palaiseau
\end{minipage}
\hfill \vspace*{0.8cm}
\begin{minipage}[t]{12cm}
Jing ZHANG \\
Laboratoire d'Analyse et Probabilit\'es\\
 Universit{\'e} d'Evry Val
d'Essonne\\
23 Boulevard de France\\
 F-91037 Evry Cedex, FRANCE\\
Email: jing.zhang.etu@gmail.com

\end{minipage}
\end{center}

\end{document}